\newcommand{\antipode}{S}
\newcommand{\al}{a[l]}
\newcommand{\cat}{\mathcal{C}}
\newcommand{\coaction}{\rho}
\newcommand{\comult}{\triangle}
\newcommand{\counit}{\varepsilon}
\newcommand{\fcal}{\mathcal{F}}
\newcommand{\gl}{\mathrm{GL}}
\newcommand{\id}{\mathrm{id}}
\newcommand{\ind}[1]{\pi_{#1}^{\circ}}
\DeclareMathOperator{\Ind}{Ind}
\renewcommand{\k}{\mathbb{K}}
\newcommand{\ktr}{\k_{\mathrm{tr}}}
\newcommand{\N}{\mathbb{N}}
\newcommand{\n}{\mathbf{n}}
\newcommand{\rs}{\mathrm{\mathfrak{H}(\Sigma_n)}}
\newcommand{\hecke}[1]{\mathrm{\mathfrak{H}(\Sigma_{#1})}}
\newcommand{\ty}{\widetilde{Y}}
\newcommand{\Z}{\mathbb{Z}}
\newtheorem{theorem}{Theorem}[section]
\newtheorem{proposition}[theorem]{Proposition}
\newtheorem{proposition-definition}[theorem]{Proposition-Definition}
\newtheorem{corollary}[theorem]{Corollary}
\theoremstyle{remark}
\newtheorem{remark}[theorem]{Remark}
\theoremstyle{definition}
\newcommand{\si}[1]{\scriptsize{\emph{#1}}}
\DeclareRobustCommand{\authorthing}{
\begin{center}
	\begin{tabular*}{0.75\textwidth}{@{\extracolsep{\fill}}lc}
		 \quad Ana Paula Santana\thanks{This work was partially supported by the Centre for Mathematics of the
University of Coimbra -- UID/MAT/00324/2013, funded by the Portuguese
Government through FCT/MEC and co-funded by the European Regional
Development Fund through the Partnership Agreement PT2020.
} &
		Ivan Yudin\thanksmark{1}\thanksgap{0.3em}\thanks{The second
		author's work was  supported by  Programa
Investigador FCT IF/00016/2013.}\\	
 \si{\quad aps@mat.uc.pt} & \si{\quad yudin@mat.uc.pt}\\
\multicolumn{2}{l}{ \si{CMUC, Department of Mathematics, University of Coimbra,
Coimbra, Portugal} }\\
	\end{tabular*}

\vspace{5ex}
{\emph{In memory of J.A.~Green, with the highest admiration.}}
\end{center}
}
\date{}
\author{\authorthing}
\title{ An action of the Hecke monoid on rational modules for the
Borel subgroup of a quantised general linear group.   }
\begin{document}

\maketitle
\begin{abstract}
We construct an  action of the Hecke monoid  on the category of
rational modules
for the quantum negative Borel subgroup  of the quantum general linear group.
We also show that this action restricts to the category of polynomial modules for
this quantum subgroup and induces an action on the category of modules for
the
quantised Borel-Schur algebra~$S^-_{\alpha,\beta}(n,r)$.
\end{abstract}
\section{Introduction}

 The idea of ``\emph{categorification}'' originates from
the joint work \cite{frenkel} of Crane and Frenkel, and the term was
coined later in  Crane's article \cite{crane}.
Recently \emph{categorification} became an intensively studied subject
in several mathematical areas.  A detailed account on this topic can be found in~\cite{mazorchuk}.

Given an action   $\rho$ of a monoid  $M$ on the Grothendieck group
$\mathrm{Gr}(\cat)$ of a category $\cat$,
one can ask if it can be categorified, that is if there exists an action of $M$ on $\cat$ that induces $\rho$.
Note that, to find such an action, one has: (i)  to find a set of functors
$F_m$, $m\in M$, whose action on $\cat$ gives operators $\rho(m)$ on
$\mathrm{Gr}(\cat)$; (ii) to show the existence of a
coherent family of natural isomorphisms $\lambda_{m,m'}\colon F_m F_{m'} \to
F_{mm'}$. Usually, (ii) is much  more tricky  than (i).

Let $B_n$  be the negative Borel subgroup of the general linear group of degree
$n$  over an algebraically closed field.
Denote by $\mathrm{Gr}$ the Grothendieck group of the category of finite
dimensional polynomial $B_n$-modules. The tensor product of modules turns
$\mathrm{Gr}$ into a ring.
Let $N$ be a finite dimensional polynomial $B_n$-module. Then we can consider
the formal character $\mathrm{ch}_N$ of $N$ in $\Z[x_1,\dots, x_n]$. In fact
$\mathrm{ch}$ is a ring homomorphism from $\mathrm{Gr}$ to $\Z[x_1,\dots,x_n]$.
In~\cite{demazure} Demazure showed how the characters of certain
$B_n$-modules can be calculated  by applying
what is now called the \emph{Demazure operators}, $\pi_i$, $1\le i\le n-1$, to a monomial $x_1^{k_1} \dots x_n^{k_n}$.
It turns out that the operators $\pi_i$, $1\le i\le n$, define an action of the
Hecke monoid, $\rs$, on $\Z[x_1,\dots,x_n]$. Later Magyar in~\cite{magyar}
generalised Demazure's character formula for the  class of flag Weyl modules
corresponding to  percentage-avoiding shapes.
While researching this class we were lead to the idea that a  categorification of the action of $\rs$ on $\Z[x_1,\dots,x_n]$ can be useful to prove some conjectures stated in~\cite{reiner}.

In this article we show that the Hecke monoid acts
on the category of rational modules for the quantum negative Borel subgroup of the
quantum general linear group. In fact, we construct what we call a
\emph{preaction} of $\rs$ on this category. In~\cite{preaction} the second author proves that  the category of actions of $\rs$
on a category $\cat$
is equivalent to the category of preactions of $\rs$ on $\cat$.
Therefore, via this equivalence, from the constructed preaction one  can obtain an action of $\rs$ on the above referred  category. It is then quite simple to get a preaction, and so an action, of $\rs$ on the category  of $S_{\alpha,\beta}^-(n,r)$-modules, where $S_{\alpha,\beta}^-(n,r)$ is the quantised (negative) Borel-Schur algebra.
In a forthcoming paper we will show that the action of $\rs$ on the category of rational modules for the quantum Borel subgroup  induces an action of $\rs$ on the corresponding derived category. This action will provide a categorification of the action of $\rs$ on  $\Z[x_1,\dots,x_n]$.

The paper is organised as follows. In Section~\ref{second} we
  introduce the notion of a preaction  of the Hecke monoid $\rs$ on a
category $\cat$.
Section~\ref{third} contains some results,  on cotensor product and induction for
coalgebras, that are due to Takeuchi~\cite{takeuchi2} and Donkin~\cite{donkin0,donkin1}.
In Section~\ref{fourth} we study some subgroups  of  the quantum general linear group (or rather
their coordinate Hopf algebras), namely  quantum parabolic subgroups and quantum Borel
subgroups, following \cite{takeuchi} and \cite{parshall}.
We also prove the exactness of the short exact sequences~\eqref{sequence}
and~\eqref{eq:seq2}, which play a crucial role in our construction.
Section~\ref{fifth} is dedicated to the definition of functors $F_i$, $1\le i\le n-1$, that generate
an action of $\rs$ on the category of rational modules for the quantum negative
Borel subgroup.
We also construct natural isomorphisms $\tau_{ij}$, $1\le i\le j\le n-1$,  which, together with the functors
$F_i$, give a preaction of $\rs$ on the above mentioned category.  The proof that $(F,\tau)$ is in fact a preaction  is given in Section~\ref{sixth}. In Section~\ref{seventh} we show that
$(F,\tau)$ induces a preaction  of $\rs$ on the category of  $S^-_{\alpha,\beta}(n,r)$-modules.
In Section~\ref{sec:examples} we
consider explicit examples of the application of the functors $F_i$ to
$B_n$-modules.

\section{(Pre)actions of  Hecke monoids}
\label{second}

Let $n$ be a positive integer and $\Sigma_n$ the symmetric group of degree $n$.

The \emph{Hecke monoid}, $\rs$, is the
monoid   with  elements $\,T_w\,, w \in \Sigma_n\,,$ and multiplication
 determined by the rule
\begin{equation*}
T_{\sigma} T_w =
\begin{cases}
T_{\sigma w},& \mbox{if $l(\sigma w)= l(w) +1$}\\
T_w, & \mbox{if $l(\sigma w) = l(w)-1$},
\end{cases}
\end{equation*}
where $w\in \Sigma_n$ and $\sigma$ is an elementary transposition of the form
$(k,k+1)$, $1\le k\le n-1$.

Let $\cat$ be a category and $M$ a monoid with  neutral element $e$. A \emph{(pseudo)action}
$(\fcal,\lambda)$ of $M$ on $\cat$ is
\begin{enumerate}[i)]
\item a collection $\fcal$ of endofunctors $F_a\colon \cat \to \cat$, $a\in M$,
such that $F_e$ is the identity functor;
\item natural isomorphisms $\lambda_{a,b}\colon F_{a}F_b\to F_{ab}$,
such that for $a$, $b$, $c\in M$ the diagram
\begin{equation*}
\xymatrix{ F_aF_bF_c \ar[r]^-{\lambda_{a,b}F_c} \ar[d]_-{F_a \lambda_{b,c}} &
F_{ab}F_c\ar[d]^-{\lambda_{ab,c}} \\
F_a F_{bc} \ar[r]^-{\lambda_{a,bc}} & F_{abc}}
\end{equation*}
commutes, and $\lambda_{e,a} = \lambda_{a,e}$ is the identity isomorphism of
$F_a$ (see \cite{deligne}).
\end{enumerate}

Suppose $(\fcal,\lambda)$ is an action of $\rs$ on $\cat$. To simplify
notation we write $F_a$ for $F_{T_a}$ and $\lambda_{a,b}$ for
$\lambda_{T_a, T_b}$.
We also replace $(i,i+1)$ by  $i$ in the subscript of $F$ and $\lambda$.

We  define natural
isomorphisms $\tau_{ij}$, $1\le i\le j\le n-1\,,$ as follows:
\begin{equation*}
\tau_{ii} = \lambda_{i, i} \colon F_{i}^2 \to F_{i}\,;
\end{equation*}
 \begin{equation*} \tau_{ij}=\lambda_{i,j}^{-1} \lambda_{j,i}\colon F_j F_i \to F_iF_j\,, \,\,\,\mbox{ for}\,\,\, i+2\leq j\,;\end{equation*}
\begin{equation*}
\tau_{i,i+1} \colon F_{i+1}F_i F_{i+1} \to F_{i} F_{i+1}F_{i}
\end{equation*}
is the composition of
\begin{equation*}
\xymatrix@C2.2cm{ F_{i+1}F_i F_{i+1} \ar[r]^-{F_{i+1}\lambda_{i,i+1}} & F_{i+1}
F_{(i,i+1,i+2)} \ar[r]^-{\lambda_{i+1,(i,i+1,i+2)}} & F_{(i,i+2)}}
\end{equation*}
followed by the inverse of the map
\begin{equation*}
\xymatrix@C2cm{ F_{i}F_{i+1} F_{i} \ar[r]^-{F_{i}\lambda_{i+1,i}} & F_{i}
F_{(i,i+2,i+1)} \ar[r]^-{\lambda_{i,(i,i+2,i+1)}} & F_{(i,i+2)}}.
\end{equation*}

The natural transformations $\tau_{ij}$ fit in the following commutative
diagrams:
\begin{equation*}
\xymatrix{F_i^3 \ar[r]^{\tau_{ii} F_i} \ar[d]_{F_i \tau_{ii}} & F_i^2
\ar[d]^{\tau_{ii}} \\
F_i^2 \ar[r]^{\tau_{ii}} & F_i}
\end{equation*}

\vspace{2ex}

\begin{equation*}
\xymatrix@C5em{
F_{i+1}^2 F_{i}F_{i+1} \ar[rr]^-{\tau_{i+1,i+1} F_{i} F_{i+1}}
\ar[d]_-{F_{i+1} \tau_{i,i+1}}
&& F_{i+1} F_{i} F_{i+1} \ar[d]^-{\tau_{i,i+1}}\\
F_{i+1} F_i F_{i+1} F_i \ar[r]^-{\tau_{i,i+1} F_i}
& F_i F_{i+1}F_i^2 \ar[r]^-{F_iF_{i+1} \tau_{ii}} & F_i F_{i+1} F_i
 }
\end{equation*}

\vspace{6ex}

\begin{equation*}
\xymatrix@C5em{
F_{i+1} F_{i}F_{i+1}^2 \ar[rr]^-{F_{i+1} F_i\tau_{i+1,i+1} }
\ar[d]_-{ \tau_{i,i+1}F_{i+1}}
&& F_{i+1} F_i F_{i+1} \ar[d]^-{\tau_{i,i+1}}\\
F_i F_{i+1} F_i F_{i+1}  \ar[r]^-{F_i \tau_{i,i+1} }
& F_i^2 F_{i+1}F_i \ar[r]^-{F_iF_{i+1} \tau_{ii}} & F_i F_{i+1} F_i
 }
\end{equation*}

\vspace{6ex}

\begin{equation*}
\xymatrix{ F_j F_i^2 \ar[rr]^-{F_j \tau_{ii}} \ar[d]_-{\tau_{i,j} F_i} &&
F_j F_i \ar[d]_-{\tau_{ij}} \\
F_i F_j F_i \ar[r]^{F_i \tau_{i,j}} & F_i^2 F_j \ar[r]^-{\tau_{ii} F_j} & F_i
F_j}\quad\quad\quad
\xymatrix{ F_j^2 F_i \ar[rr]^-{\tau_{jj} F_i } \ar[d]^-{F_j \tau_{i,j} } &&
F_j F_i \ar[d]^-{\tau_{ij}} \\
F_j F_i F_j \ar[r]^{ \tau_{i,j}F_j} & F_i F_j^2 \ar[r]^-{F_i\tau_{jj} } & F_i
F_j}
\end{equation*}

\begin{equation*}
\xymatrix@C4em{
F_{i+1}F_i F_{i+1} F_i F_{i+1} \ar[rrr]^-{F_{i+1}F_i \tau_{i,i+1}}
\ar[ddd]_-{ \tau_{i,i+1} F_i F_{i+1}} &&&
F_{i+1} F_i^2 F_{i+1} F_i \ar[d]^-{F_{i+1} \tau_{ii} F_{i+1} F_i} \\
&&& F_{i+1} F_i F_{i+1} F_i \ar[d]^-{\tau_{i,i+1} F_i} \\
&&& F_{i} F_{i+1} F_i^2 \ar[d]^-{F_i F_{i+1} \tau_{ii}} \\
F_i F_{i+1} F_i^2 F_{i+1} \ar[r]^-{F_i F_{i+1} \tau_{ii} F_{i+1}} &
F_i F_{i+1} F_i F_{i+1} \ar[r]^-{F_i \tau_{i,i+1}} &
F_i^2 F_{i+1} F_i \ar[r]^{\tau_{ii} F_{i+1}F_i} & F_iF_{i+1} F_i }
\end{equation*}

\vspace{7ex}

\begin{equation*}
\xymatrix@C4.3em{
F_j F_{i}F_{i-1} F_{i} \ar[r]^-{\tau_{ij}F_{i-1} F_{i}} \ar[d]_-{F_j
\tau_{{i-1},i}} & F_{i} F_j F_{i-1} F_{i} \ar[r]^-{F_{i} \tau_{{i-1},j} F_{i}} &
F_{i} F_{i-1} F_j F_{i} \ar[d]_-{F_{i}F_{i-1} \tau_{ij}}  \\
F_j F_{i-1} F_{i} F_{i-1} \ar[d]_-{\tau_{{i-1},j} F_{i}F_{i-1}} &  & F_{i} F_{i-1} F_{i} F_j
\ar[d]^-{\tau_{{i-1},i}F_j} \\ F_{i-1} F_j F_{i}F_{i-1} \ar[r]^-{F_{i-1} \tau_{ij} F_{i-1}} &
F_{i-1} F_{i} F_j F_{i-1}\ar[r]^-{F_{i-1}F_{i} \tau_{{i-1},j}} & F_{{i-1}}F_{i}F_{i-1} F_j
}
\end{equation*}

\vspace{7ex}

\begin{equation*}
\xymatrix@C4.3em{
F_{{j+1}}F_{j} F_{{j+1}} F_i \ar[r]^-{F_{{j+1}}F_{j} \tau_{i,{j+1}}}
\ar[d]_-{\tau_{j,{j+1}} F_i} &
F_{{j+1}}F_{j} F_{i} F_{j+1}
\ar[r]^-{F_{{j+1}} \tau_{ij}F_{j+1}}
&F_{{j+1}}F_iF_{j}  F_{j+1}
\ar[d]^-{ \tau_{i,{j+1}} F_{j}F_{j+1}}
\\
F_{j} F_{{j+1}}F_{j} F_i
\ar[d]^-{F_{j}F_{{j+1}} \tau_{ij}}
& &
 F_{i}F_{j+1}F_{j}  F_{j+1}
\ar[d]^-{F_i \tau_{j,{j+1}}}\\
F_{j} F_{{j+1}}F_i F_{j}
\ar[r]^-{F_{j} \tau_{i,{j+1}}F_{j}}
&
F_{j}F_i F_{{j+1}} F_{j}
\ar[r]^-{ \tau_{ij} F_{j+1} F_{j}}
 &
F_i F_{j} F_{{j+1}} F_{j}
}
\end{equation*}

\vspace{7ex}

\begin{equation*}
\xymatrix {
F_k F_j F_i \ar[rr]^-{F_k \tau_{ij}}\ar[dd]_-{\tau_{jk} F_i} &&
F_k F_i F_j \ar[d]^-{\tau_{ik} F_j}\\
&& F_i F_k F_j \ar[d]^-{F_i \tau_{jk}} \\
F_j F_k F_i \ar[r]^-{F_j \tau_{ik}} & F_j F_i F_k \ar[r]^-{\tau_{ij} F_k} & F_i
F_j F_k
}
\end{equation*}

\vspace{7ex}

\begin{equation*}
\xymatrix@C10em{
F_{i+2}F_{i+1}F_{i}F_{i+2}F_{i+1}F_{i+2} \ar[r]^-{F_{i+2}F_{i+1}\tau_{i,i+2}^{-1}F_{i+1}F_{i+2}} \ar[d]_-{F_{i+2}F_{i+1}F_{i}\tau_{i+1,i+2}} &
F_{i+2}F_{i+1}F_{i+2}F_{i}F_{i+1}F_{i+2}
\ar[d]^-{\tau_{i+1,i+2}F_{i}F_{i+1}F_{i+2}} \\
F_{i+2}F_{i+1}F_{i}F_{i+1}F_{i+2}F_{i+1} \ar[d]_-{F_{i+2}\tau_{i,i+1}F_{i+2}F_{i+1}}  & F_{i+1}F_{i+2}F_{i+1}F_{i}F_{i+1}F_{i+2}  \ar[d]^-{F_{i+1}F_{i+2}\tau_{i,i+1}F_{i+2}} \\
F_{i+2}F_{i}F_{i+1}F_{i}F_{i+2}F_{i+1} \ar[d]_-{\tau_{i,i+2}F_{i+1}F_{i}F_{i+2}F_{i+1}} & F_{i+1}F_{i+2}F_{i}F_{i+1}F_{i}F_{i+2} \ar[d]^-{F_{i+1}\tau_{i,i+2}F_{i+1}F_{i}F_{i+2}} \\
F_{i}F_{i+2}F_{i+1}F_{i}F_{i+2}F_{i+1} \ar[d]_-{F_{i}F_{i+2}F_{i+1}\tau_{i,i+2}^{-1}F_{i+1}} & F_{i+1}F_{i}F_{i+2}F_{i+1}F_{i}F_{i+2} \ar[d]^-{F_{i+1}F_{i}F_{i+2}F_{i+1}\tau_{i,i+2}^{-1}} \\
F_{i}F_{i+2}F_{i+1}F_{i+2}F_{i}F_{i+1} \ar[d]_-{F_{i}\tau_{i+1,i+2}F_{i}F_{i+1}} & F_{i+1}F_{i}F_{i+2}F_{i+1}F_{i+2}F_{i} \ar[d]^-{F_{i+1}F_{i}\tau_{i+1,i+2}F_{i}}\\
F_{i}F_{i+1}F_{i+2}F_{i+1}F_{i}F_{i+1} \ar[d]_-{F_{i}F_{i+1}F_{i+2}\tau_{i,i+1}} & F_{i+1}F_{i}F_{i+1}F_{i+2}F_{i+1}F_{i} \ar[d]^-{\tau_{i,i+1}F_{i+2}F_{i+1}F_{i}}  \\
F_{i}F_{i+1}F_{i+2}F_{i}F_{i+1}F_{i} \ar[r]^-{F_{i}F_{i+1}\tau_{i,i+2}F_{i+1}F_{i}} & F_{i}F_{i+1}F_{i}F_{i+2}F_{i+1}F_{i}
}
\end{equation*}
We will say that a collection of functors $F_1$, \dots, $F_{n-1}$ and natural
isomorphisms $\tau_{ij}$, $1\le i\le j \le n-1$, satisfying the above commutative
diagrams defines a \emph{preaction} of $\rs$ on $\cat$. There is proved in
\cite{preaction}  that the category of actions of $\rs$
on $\cat$
is equivalent to the category of preactions of $\rs$ on $\cat$.

In the next sections we will construct a preaction of $\rs$ on the category of
rational modules for the  negative quantum Borel subgroups of the quantum general linear groups. Therefore, we obtain an action of $\rs$ on this category, via the above referred  equivalence.

\section{Cotensor product and induction}\label{cotensorprod}
\label{third}
In this section we collect some general definitions and results concerning
coalgebras, bialgebras, and Hopf algebras.
In our treatment of the cotensor product we follow~\cite{takeuchi2}.

We start with some notation. We  will denote by $\k$ the ground field. By a coalgebra we will always mean a $\k$-coalgebra and we
 use Sweedler summation notation for coalgebras and for comodules.

Let $C$ be a $\k$-coalgebra. By a $C$-comodule we mean a right $C$-comodule and  Comod-$C$ denotes the category of $C$-comodules.
 If $M \in$ Comod-$C$
we write $\rho_M$ for the structure map $M\to M\otimes
C$ of $M$. If $N$ is a left $C$-comodule, we denote the structure map
$N\to C\otimes N$ by $\lambda_N$.

The cotensor product $M\otimes^C N$ of these right and left comodules is defined as
 the kernel of the map
\begin{equation*}
\rho_M\otimes N - M\otimes \lambda_N \colon M\otimes N \to M\otimes C \otimes N.
\end{equation*}

If $M$ is a $C'$-$C$-bicomodule and $N$ is a $C$-$C''$-bicomodule, then
$M\otimes^C N$ is a $C'$-$C''$-bicomodule with coactions given by restricting
$\lambda_M \otimes N$ and $M \otimes \rho_N$
to $M\otimes^C N$.
\begin{remark}
\label{associativity}
If $L$ is a $C''$-$C'''$-bicomodule, then  $(M\otimes^C N) \otimes^{C''} L$ and
$M \otimes^C (N\otimes^{C''} L)$ are isomorphic, and this isomorphism is given by
restricting the natural isomorphism $(M\otimes N) \otimes L \to M\otimes
(N\otimes L)$, $(m\otimes n) \otimes l\mapsto m\otimes (n\otimes l)$.
Moreover, both $(M\otimes^C N) \otimes^{C''} L$ and $M\otimes^C
(N\otimes^{C''} L)$ can be identified with the intersection of the kernels of
the maps
\begin{equation*}
\begin{aligned}
\rho_M \otimes N \otimes L - M \otimes \lambda_N \otimes L \colon M\otimes N
\otimes L \to M \otimes C \otimes N \otimes L \\
M \otimes \rho_N \otimes L - M \otimes N \otimes \lambda_L \colon M\otimes N
\otimes L \to M \otimes N \otimes C'' \otimes L.
\end{aligned}
\end{equation*}
\end{remark}
Suppose $f\colon C\to B$ is a homomorphism of coalgebras. Then we can consider
every left (right) $C$-comodule as a
left (right) $B$-comodule via $f$. We will denote the resulting left (right)
$B$-comodule by $f_\circ M$, or simply by $M$ if no confusion arises.   In particular, we can consider $C$ as a
$B$-$C$-bicomodule.
Thus for every right $B$-comodule $M$, we get that $M\otimes^B C$ is a
$C$-comodule. The $C$-comodule $M \otimes^B C$ is called the induced comodule and
will be denoted either by $f^\circ M$ or~$\Ind_B^C M$.

If $H$ is a Hopf algebra, then the category Comod-$H$ of (right) comodules over
$H$ is endowed with a monoidal structure. Namely, if $M$ and $N$ are
$H$-comodules,  the coaction of $H$ on $M\otimes N$ is defined by
\begin{equation*}
 m\otimes n \mapsto \sum m_{(0)} \otimes n_{(0)} \otimes m_{(1)} n_{(1)}.
\end{equation*}
The trivial $H$-comodule $\ktr$ is the one-dimensional comodule with underlying vector space $\k$ and coaction given by
$1\mapsto 1\otimes 1$. It is clear that $\ktr$ can be chosen as the identity
object for the above tensor product.

We will frequently use the tensor identity:

\begin{theorem}[{\cite[Proposition~1.3]{donkin1}}]\label{tensor}
Suppose $f\colon H_1\to H_2$ is a homomorphism of Hopf algebras and $M$ is an
$H_1$-comodule. Then for every $H_2$-comodule $N$ there are natural isomorphisms
\begin{equation*}
R^k \Ind_{H_2}^{H_1} ( M\otimes N) \cong M \otimes R^k\Ind_{H_2}^{H_1} N,\quad
\mbox{for all } k\ge 0.
\end{equation*}
\end{theorem}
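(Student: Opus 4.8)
The plan is to reduce the statement to the case $k=0$ together with a dévissage along an injective resolution. First I would establish the ungraded tensor identity
\[
\Ind_{H_2}^{H_1}(M\otimes N)\;\cong\; M\otimes \Ind_{H_2}^{H_1} N ,
\]
natural in $N$, and then propagate it to all higher $k$ using exactness of the functor $M\otimes(-)$ together with acyclicity of induction on a suitable class of comodules. Throughout, $M$ is regarded as an $H_2$-comodule via $f$, so that $M\otimes N$ is an $H_2$-comodule for the diagonal coaction.

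For the base case I would write both sides as kernels, using the definition of the cotensor product from Section~\ref{third}. Unwinding the definitions, $\Ind_{H_2}^{H_1}(M\otimes N)=(M\otimes N)\otimes^{H_2}H_1$ and $M\otimes\Ind_{H_2}^{H_1}N=M\otimes(N\otimes^{H_2}H_1)$ both sit inside $M\otimes N\otimes H_1$. I would then introduce the untwisting automorphism
\[
\theta\colon M\otimes N\otimes H_1\longrightarrow M\otimes N\otimes H_1,\qquad m\otimes n\otimes h\longmapsto \sum m_{(0)}\otimes n\otimes m_{(1)}\,h,
\]
where $m_{(0)}\otimes m_{(1)}$ is the $H_1$-coaction on $M$; its inverse is $m\otimes n\otimes h\mapsto \sum m_{(0)}\otimes n\otimes S(m_{(1)})h$, where $S$ is the antipode of $H_1$. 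A direct Sweedler-notation computation, using that the $H_2$-coaction on $M$ is $f$ applied to its $H_1$-coaction, shows that $\theta$ restricts to an isomorphism $(M\otimes N)\otimes^{H_2}H_1\xrightarrow{\sim}M\otimes(N\otimes^{H_2}H_1)$ which is $H_1$-colinear for the right-hand coactions; naturality in $N$ is immediate, since $\theta$ does not involve the coaction of $N$.

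To pass to higher $k$ I would choose an injective resolution $N\to I^\bullet$ in the category of $H_2$-comodules. Since $\k$ is a field, the functor $M\otimes(-)$ is exact, so $M\otimes N\to M\otimes I^\bullet$ is a resolution of $M\otimes N$. The crucial point is that each $M\otimes I^j$ is $\Ind$-acyclic. For this I would invoke the Hopf-algebra isomorphism $M\otimes H_2\cong \underline{M}\otimes H_2$ of $H_2$-comodules, where $\underline{M}$ is the underlying space with trivial coaction, realised by $m\otimes h\mapsto\sum m_{(0)}\otimes m_{(1)}h$ and inverted by the antipode of $H_2$; this shows that tensoring a cofree comodule by $M$ again yields a cofree comodule. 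As the injective $H_2$-comodules are exactly the direct summands of cofree ones, each $M\otimes I^j$ is injective, so $R^k\Ind_{H_2}^{H_1}(M\otimes I^j)=0$ for $k>0$. Hence $R^k\Ind_{H_2}^{H_1}(M\otimes N)=H^k\bigl(\Ind_{H_2}^{H_1}(M\otimes I^\bullet)\bigr)$, and applying the base-case isomorphism termwise together with exactness of $M\otimes(-)$ gives
\[
R^k\Ind_{H_2}^{H_1}(M\otimes N)\cong H^k\bigl(M\otimes\Ind_{H_2}^{H_1}(I^\bullet)\bigr)\cong M\otimes R^k\Ind_{H_2}^{H_1}N,
\]
the isomorphisms being natural by a standard comparison of resolutions.

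The main obstacle I anticipate is the acyclicity step rather than the base case: one must know that injective $H_2$-comodules are summands of cofree comodules and that $M\otimes(-)$ preserves cofreeness. Both facts rest on the existence of the antipode, which is precisely why the hypothesis that $H_1$ and $H_2$ are Hopf algebras — and not merely bialgebras — is essential; the two untwisting isomorphisms above are the only places where $S$ intervenes. The remaining work, namely checking the colinearity of $\theta$ and that it matches the two cotensor kernels, is a routine but somewhat delicate Sweedler-notation verification.
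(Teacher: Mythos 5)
The paper does not prove this statement: it is imported verbatim from Donkin (cited as Proposition~1.3 of that work), and the only thing the paper records is the explicit formula~\eqref{Lambda1} for the degree-zero isomorphism. So there is no in-paper argument to compare against; your proposal has to be judged on its own, and it is correct. Your untwisting map $\theta$ restricted to $M\otimes(N\otimes^{H_2}H_1)$ is exactly the map $\phi$ of~\eqref{Lambda1}, and your verification strategy (check $\theta$ exchanges the two cotensor kernels inside $M\otimes N\otimes H_1$, with inverse built from the antipode of $H_1$) is the standard one. The dévissage to higher $k$ is also sound: over a field every comodule embeds in a cofree one, injectives are summands of cofree comodules, and your second untwisting isomorphism $M\otimes H_2\cong\underline{M}\otimes H_2$ (this time using the antipode of $H_2$) shows $M\otimes(-)$ preserves injectivity, so $M\otimes I^\bullet$ computes $R^k\Ind_{H_2}^{H_1}(M\otimes N)$ and the base case passes termwise through cohomology by exactness of $M\otimes(-)$. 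This is essentially the argument in the cited source (and the comodule analogue of the generalized tensor identity for algebraic groups). The only places where you wave your hands --- colinearity of $\theta$ for the right $H_1$-coactions and the fact that $\theta^{-1}$ maps $(M\otimes N)\otimes^{H_2}H_1$ back into $M\otimes(N\otimes^{H_2}H_1)$ --- are genuinely routine Sweedler computations using that $f$ is a coalgebra map, so nothing is missing.
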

The explicit formula for the above isomorphism when $k=0$ is given in
\cite{donkin0}.
Namely, we have
\begin{equation}\label{Lambda1}
\begin{aligned}
\phi\colon M\otimes \Ind_{H_2}^{H_1} N & \to \Ind_{H_2}^{H_1}(M\otimes N)\\
x \otimes \sum_i y_i \otimes c_i &\mapsto \sum_i x_{(0)} \otimes y_i \otimes
x_{(1)} c_i.
\end{aligned}
\end{equation}

\section{Quantisation}
\label{fourth}
In this section we study  properties of some quantised bialgebras and of subgroups of  quantum general linear groups.
We start with an overview of notions introduced in \cite{takeuchi} and some
results proved in \cite{parshall}.

From here on, $\alpha$ and $\beta$  are non-zero elements of
$\k$.
We will also denote by $\n$ the set of integers~$\{1,\cdots ,n\}$.

Let $F(n)$ be the free $\k$-algebra with $ n^2$ generators
$x_{ij}$,  for $ i,j \in \n$.  Denote by $I_{\alpha, \beta}$ the ideal of $F(n)$
generated by elements of the form
\begin{equation}
\label{relations}
\begin{aligned}
x_{is}x_{ir}&-\alpha x_{ir}x_{is},&&  \hbox{ for  } 1\leq i\leq n
\mbox{ and } 1\leq  r<s\leq n;\\
x_{jr}x_{ir}&-\beta x_{ir}x_{jr}, &&\hbox{ for  } 1\leq i<j \leq n
\mbox{ and } 1\leq  r\leq n;\\
x_{jr}x_{is}&- \alpha^{-1} \beta x_{is}x_{jr},&& \hbox{ for } 1\leq i <j \leq n \mbox{ and } 1\leq
r< s < n;\cr
x_{js}x_{ir}&-x_{ir}x_{js}-(\beta -\alpha^{-1})x_{is}x_{jr},&& \hbox{ for }  1\leq i<j \leq n
\mbox{ and } 1\leq r<s\leq n.
\end{aligned}
\end{equation}
The algebra
$\left.\raisebox{0.3ex}{$F(n)$}\middle/\!\raisebox{-0.3ex}{$I_{\alpha,\beta}$}\right.$
is denoted by $A_{\alpha,\beta}(n)$ and the canonical image $x_{ij} +
I_{\alpha,\beta}$ of
$x_{ij}$
 in $A_{\alpha,\beta}(n)$ by $c_{ij}$.
In what follows, we will often skip the subscripts $\alpha$,~$\beta$.
For a matrix $\omega \in M_n(\N)$
we write $c^\omega$ for the product
\begin{equation*}
c_{11}^{\omega_{11}} c_{12}^{\omega_{12}} \dots c_{1n}^{\omega_{1n}} \dots
c_{nn}^{\omega_{nn}}
\end{equation*}
and, similarly, $x^\omega$ for the product
\begin{equation*}
x_{11}^{\omega_{11}} x_{12}^{\omega_{12}} \dots x_{1n}^{\omega_{1n}} \dots
x_{nn}^{\omega_{nn}}.
\end{equation*}
 On the set $\left\{\, x_{is} \,\middle|\, i,s \in \n
\right\} \subset F(n)$ we define an ordering  by
$x_{js}>x_{it}$ if $j>i$, and $x_{is}>x_{ir}$ if $s>r$.  We consider the corresponding lexicographical ordering on the set of monomials~$\left\{\, x^{\omega} \,\middle|\, \omega\in M_n(\N)
\right\}$.

The following fact is well-known but  we include a short sketch of a proof.
\begin{theorem}\label{groebner}
 The set \eqref{relations} is a Gr\"obner basis of $I_{\alpha,\beta}$ with respect to the above
ordering. Moreover, $\left\{\, c^\omega \,\middle|\, \omega\in M_n(\N) \right\}$
is a basis of $A(n)$.
\end{theorem}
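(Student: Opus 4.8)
The plan is to apply the standard Diamond Lemma (Bergman) to the rewriting system coming from the relations~\eqref{relations}, viewed as reduction rules that replace the leading monomial of each generator by the lower-order tail. First I would rewrite each of the four families in~\eqref{relations} as a rewrite rule oriented by the given lexicographic order: for instance $x_{is}x_{ir}\mapsto \alpha^{-1} x_{ir}x_{is}$ for $r<s$ (leading term $x_{is}x_{ir}$ since $s>r$), $x_{jr}x_{ir}\mapsto \beta^{-1} x_{ir}x_{jr}$ for $i<j$ (leading term $x_{jr}x_{ir}$ since $j>i$), $x_{jr}x_{is}\mapsto \alpha\beta^{-1} x_{is}x_{jr}$ for $i<j$, $r<s$, and $x_{js}x_{ir}\mapsto x_{ir}x_{js}+(\beta-\alpha^{-1})x_{is}x_{jr}$ for $i<j$, $r<s$ (leading term $x_{js}x_{ir}$). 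In each case the replacement is a $\k$-combination of monomials strictly smaller in the chosen lexicographic order, so the order gives a well-founded partial order compatible with the reductions and with multiplication; hence every reduction sequence terminates.

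Next I would check that all ambiguities (overlaps) are resolvable. Since every left-hand side is a degree-two monomial $x_{ab}x_{cd}$, the only overlap ambiguities are triples $x_{ab}x_{cd}x_{ef}$ where both $x_{ab}x_{cd}$ and $x_{cd}x_{ef}$ are leading terms of rules; there are no inclusion ambiguities. One reduces the word $x_{ab}x_{cd}x_{ef}$ in the two possible ways — first the left pair, then the right pair — and verifies that the two resulting normal forms agree. This is the routine but bookkeeping-heavy core of the argument: one organises the indices $(a,b),(c,d),(e,f)$ according to how the row and column indices compare, observing that a pair $x_{pq}x_{rs}$ is reducible exactly when $(p,q)>(r,s)$ in the order on the $x$'s, i.e. either $p>r$, or $p=r$ and $q>s$. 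After reducing each triple to a sum of lexicographically increasing monomials, the two sides must coincide; the constants $\alpha,\beta$ combine consistently precisely because the last relation in~\eqref{relations} (the ``quantum Plücker''-type relation) has its correction term $(\beta-\alpha^{-1})x_{is}x_{jr}$ chosen so that the Yang--Baxter-type consistency holds. I would present one or two representative overlap resolutions — say the case $a>c>e$ with various column configurations, and the mixed case where a row equality forces interaction with the fourth relation — and assert that the remaining cases are analogous.

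Granting confluence and termination, the Diamond Lemma yields that the irreducible monomials — namely the $x^\omega$, $\omega\in M_n(\N)$, which are exactly the monomials in which the variables appear in the non-decreasing order $x_{11},x_{12},\dots,x_{1n},x_{21},\dots,x_{nn}$ — form a $\k$-basis of $A(n)=F(n)/I_{\alpha,\beta}$, and simultaneously that $\{\,$leading terms of the rules $\}$ together with the rules themselves constitute a (noncommutative) Gröbner basis of $I_{\alpha,\beta}$ for the chosen order. Equivalently, the set~\eqref{relations} generates $I_{\alpha,\beta}$ and all $S$-polynomials reduce to zero, which is the Gröbner basis condition. Hence $\{\,c^\omega \mid \omega\in M_n(\N)\,\}$ is a basis of $A(n)$, as claimed.

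The main obstacle is purely the combinatorial case analysis in the overlap check: there are on the order of a dozen genuinely different index configurations for the triple $x_{ab}x_{cd}x_{ef}$, and in the ones that invoke the fourth relation the correction term propagates and must be tracked carefully so that the scalar coefficients $\alpha^{\pm1},\beta^{\pm1}$ and the $(\beta-\alpha^{-1})$ terms cancel correctly. Everything else — termination, the identification of normal forms with the monomials $x^\omega$, and the passage from confluence to the Gröbner basis statement — is formal. Since the result is ``well-known'', I would keep the write-up to a sketch: state the orientation of the rules, note termination, indicate that all overlaps resolve (illustrating with the characteristic case that uses the last relation), and invoke the Diamond Lemma.
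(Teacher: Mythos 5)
Your proposal follows essentially the same route as the paper: orient each relation in~\eqref{relations} by its leading monomial, check that all overlap ambiguities resolve, and invoke the Diamond Lemma to obtain both the Gr\"obner basis property and the basis $\left\{\, c^\omega \,\middle|\, \omega\in M_n(\N)\right\}$ of $A(n)$; the only difference is how the overlap check is discharged --- the paper observes that every overlap involves at most three row and three column indices, reduces to $n\le 3$ via an embedding $F(3)\to F(m)$, and verifies those finitely many cases by computer algebra, whereas you propose an equivalent by-hand case analysis organised by the relative order of the indices. One small slip: the oriented reductions should read $x_{is}x_{ir}\mapsto\alpha\, x_{ir}x_{is}$, $x_{jr}x_{ir}\mapsto\beta\, x_{ir}x_{jr}$ and $x_{jr}x_{is}\mapsto\alpha^{-1}\beta\, x_{is}x_{jr}$ (not the inverse coefficients), though this does not affect the structure of the argument.
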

\begin{proof}
Note that every element in \eqref{relations} is written so that
the leading monomial is a first term. To show that \eqref{relations} is a Gr\"obner
basis one has to check that all the critical pairs are resolvable.

Suppose $m\ge 3$.
Let  $S'$ be the set \eqref{relations} for $n=3$, and $S$  the
set~\eqref{relations} for $n=m$.
It is easy to
see that every critical pair of $S$ involves at most three row and three column
indices.
 Let us fix  two triples of indices $1\le i_1<i_2<i_3\le m$ and $1\le r_1<r_2<r_3\le m$. Then we
have a homomorphism of free algebras  $\varphi\colon F(3)\to F(m)$ defined by
$\varphi(x_{js}) = x_{i_j,r_s}$.
Clearly $\varphi(S') \subset S$.  Now every critical pair involving row indices
$i_1$, $i_2$, $i_3$, and column indices $r_1$, $r_2$, $r_3$ lies in
$\phi(S')$.  This shows that  it is enough to prove that $S'$
is a Gr\"obner basis.
Hence the claim of the theorem has to be verified for  $n\le 3$ only.
The case $n=1$ is
trivial, and the cases $n=2$, $n=3$  can be
checked using a computer algebra system, such
as Magma~\cite{magma}.

It is easy to see that  the set $\left\{\, x^\omega \,\middle|\,
\omega\in M_n(\N)
\right\}$  is the set of non-reducible monomials with respect to the Gr\"obner basis
\eqref{relations}. Thus $\left\{\, c^{\omega} \,\middle|\, \omega\in
M_n(\N) \right\}$ is a basis of $A(n)$.
\end{proof}

Given a sequence   $b = (b_1,\dots,b_n)\in \n^n$, let $\,I(b)=I_{\alpha,\beta}(b)\,$ be the ideal of $A(n)$ generated by $\,\left\{ c_{is} \middle| s >
b_i
\right\}.$
 We define  the
quotient algebra
$$A(b)=\left.\raisebox{0.3ex}{$A(n)$}\middle/\!\raisebox{-0.3ex}{$I(b)$}\right.,$$
\noindent and denote by $[c_{is}]_b$ the image of $c_{is}$
under the canonical projection  from $A(n)$ to $A(b)$.

\begin{theorem}
\label{basis}
 On the set $\left\{\, x^{\omega} \,\middle|\, \omega
\in M_n(\N)
\right\}\subset F(n)$  consider the  ordering used
in Theorem~\ref{groebner}. Let $b = (b_1,\dots,b_n) \in \n^n\,$ be a non-decreasing sequence. Denote by $S''$ the union of the set \eqref{relations} and
$S':=\left\{\, x_{is} \,\middle|\, s>b_i \right\}$. Then $S''$ is a Gr\"obner
basis of the ideal generated by $S''$. In particular,
 $$\left\{\, [c^\omega]_b \,\middle|\,  \omega\in M_n(\N),\, \omega_{is} =0 \,\,\mbox{ for}\,\, s > b_i
\right\}$$  is a basis of $A(b)$.
\end{theorem}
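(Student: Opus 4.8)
The plan is to reduce the statement to the Gröbner basis computation of Theorem~\ref{groebner} by exploiting the structure of the extra relations $S'=\{x_{is}\mid s>b_i\}$. First I would observe that each monomial $x_{is}$ with $s>b_i$ is already its own leading monomial (a single variable), so $S''$ is written in the form required for a Gröbner basis, with leading terms listed first. The task is then to verify that every critical pair (overlap/S-polynomial) among the elements of $S''$ reduces to zero modulo $S''$. The critical pairs split into three kinds: those internal to \eqref{relations} (already resolvable by Theorem~\ref{groebner}); those internal to $S'$ (there are no nontrivial overlaps between two distinct single variables $x_{is}$, $x_{jt}$, since their leading monomials share no common factor unless they are equal); and the mixed pairs, where a leading monomial $x_{\bullet s}x_{\bullet t}$ (or $x_{\bullet s}x_{\bullet t}$) of a relation in \eqref{relations} overlaps with some $x_{is}$ with $s>b_i$.

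For the mixed critical pairs I would argue as follows. Take a relation $g$ from \eqref{relations} whose leading monomial is $x_{ab}x_{cd}$ and suppose it overlaps a variable $x_{is}$ from $S'$; the overlap forces $x_{is}$ to equal either $x_{ab}$ or $x_{cd}$. In either case the S-polynomial is, up to a scalar, obtained by subtracting from the lower terms of $g$ the appropriate multiple of $x_{is}$, and one checks directly from the explicit shape of the relations in \eqref{relations} that every monomial appearing in those lower terms still contains a factor $x_{jt}$ with $t>b_j$. Here is where the hypothesis that $b$ is non-decreasing is essential: in the last family of relations, $x_{js}x_{ir}-x_{ir}x_{js}-(\beta-\alpha^{-1})x_{is}x_{jr}$, with $i<j$, if the leading monomial involves $x_{js}$ with $s>b_j$, then since $b_i\le b_j<s$ we also have $s>b_i$, so the cross term $x_{is}x_{jr}$ is divisible by $x_{is}\in S'$; and if instead the overlap is with $x_{ir}$ or similar, a parallel monotonicity argument applies. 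Thus every mixed S-polynomial reduces to zero, and by Buchberger's criterion $S''$ is a Gröbner basis of the ideal it generates, which is exactly $I_{\alpha,\beta}+ (S')$, i.e.\ the ideal whose quotient (after passing through $A(n)$) defines $A(b)$.

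Finally, to identify the basis of $A(b)$ I would determine the set of monomials in $F(n)$ irreducible with respect to the leading terms of $S''$. A monomial $x^\omega$ is irreducible iff it avoids the leading terms of \eqref{relations} — which by Theorem~\ref{groebner} means it is of the ordered form $x_{11}^{\omega_{11}}\cdots x_{nn}^{\omega_{nn}}$ — and additionally avoids every single variable $x_{is}$ with $s>b_i$, i.e.\ $\omega_{is}=0$ whenever $s>b_i$. By the standard consequence of being a Gröbner basis, the images $[c^\omega]_b$ of these irreducible monomials form a $\k$-basis of $A(b)=A(n)/I(b)$; this matches the claimed set exactly.

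The step I expect to be the main obstacle is the bookkeeping for the mixed critical pairs: one must check, relation family by relation family and overlap position by overlap position, that the lower-order terms always retain a variable $x_{jt}$ with $t>b_j$, and it is precisely this check that consumes the non-decreasing hypothesis on $b$. As in Theorem~\ref{groebner}, the overlaps involve at most three row and three column indices, so — after an analogous reduction via the homomorphisms $\varphi\colon F(3)\to F(m)$ — it suffices to treat $n\le 3$, where the verification is a finite check that can be done by hand or with a computer algebra system such as Magma~\cite{magma}.
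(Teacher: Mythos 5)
Your proposal is correct and follows essentially the same route as the paper: reduce to checking the mixed ambiguities between \eqref{relations} and $S'$, observe that the only delicate family is $x_{js}x_{ir}-x_{ir}x_{js}-(\beta-\alpha^{-1})x_{is}x_{jr}$, and use the non-decreasing hypothesis (via $s>b_j\ge b_i$, resp.\ $s>r>b_i$) to see that the cross term $x_{is}x_{jr}$ is itself reducible. The paper's proof is exactly this check, done directly without the reduction to $n\le 3$ that you mention at the end (which is unnecessary here).
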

\begin{proof}
To prove the theorem we have to check that all ambiguities in $S''$
are resolvable. Let us write $S$ for the set \eqref{relations}. For pairs of
elements in $S$ the ambiguities are resolvable, since $S$ is a Gr\"obner basis, by
Theorem~\ref{groebner}. There are no ambiguities between pairs of
elements in $S'$. Thus we only have
 to check that all the ambiguities between an element in $S$ and an element
in $S'$ are resolvable. The only
interesting case is when the element of $S$ is of the form $x_{js}x_{ir} -
x_{ir} x_{js} - (\beta-\alpha^{-1}) x_{is} x_{jr}$, for $i<j$ and $r<s$,  and the element of
$S'$ is either $x_{js}$ or $x_{ir}$. In the first
case we get that $s>b_j\ge b_i$ and therefore also $x_{is} \in S'$.
In the second case $s>r>b_i$ and again $x_{is} \in S'$. Therefore in both
cases the ambiguity is resolvable.

It is now  straightforward that $\left\{\, x^\omega \,\middle|\, \omega\in M_n(\N),\, \omega_{is} =0 \,\,\mbox{ for}\,\, s > b_i  \right\}$ is the set of non-reducible monomials with respect to $S''$. Thus   the set
$\left\{\, [c^{\omega}]_b \,\middle|\,  \omega\in M_n(\N),\, \omega_{is} =0 \,\,\mbox{ for}\,\, s > b_i \right\}$ is a $\k$-basis
of $A(b)$.
\end{proof}
\begin{corollary}
\label{intdomain}
Suppose $b=(b_1,\dots,b_n)\in \n^n$ is a non-decreasing sequence.
Then the algebra $A(b)$ has no zero divisors.
\end{corollary}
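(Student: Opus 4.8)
The plan is to deduce the absence of zero divisors from a filtration/associated-graded argument based on the Gröbner basis in Theorem~\ref{basis}. First I would recall from Theorem~\ref{basis} that, since $b$ is non-decreasing, the set $S''$ is a Gröbner basis, so $A(b)$ has the $\k$-basis $\mathcal{B} = \left\{\, [c^\omega]_b \,\middle|\, \omega\in M_n(\N),\ \omega_{is}=0 \text{ for } s>b_i \right\}$, and multiplication is governed by the rewriting rules coming from \eqref{relations}. The key observation is that every relation in \eqref{relations}, when read as a rewriting rule, replaces the leading monomial by a $\k$-linear combination of monomials that are either of the same total degree and strictly smaller in the lexicographic order, or — in the last family $x_{js}x_{ir}\mapsto x_{ir}x_{js}+(\beta-\alpha^{-1})x_{is}x_{jr}$ — strictly smaller in total degree (the term $x_{is}x_{jr}$ has degree $2$, same as the left side, so in fact all terms here have total degree $2$; the point that matters is only the lexicographic drop). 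Hence the multiplication on $A(b)$ is "lex-triangular" with respect to the basis $\mathcal{B}$: for basis elements indexed by $\omega,\omega'$, the product $[c^\omega]_b\,[c^{\omega'}]_b$ has, as its lexicographically largest term, the straightening of the concatenation $x^\omega x^{\omega'}$, and the leading exponent of that straightening is exactly the "sorted" product $\omega \oplus \omega'$ obtained by adding exponents entrywise (this is consistent because the rewriting only ever lowers the lex order, never raises it, and the generators commute up to scalars modulo lower terms).

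With this in place, I would introduce the $\N^{n\times n}$-indexed filtration of $A(b)$ whose piece $F_{\le \mu}$ is the span of those $[c^\omega]_b$ with $\omega \le \mu$ in the (total-degree refined) lex order, and form the associated graded algebra $\mathrm{gr}\,A(b)$. By the triangularity just described, $\mathrm{gr}\,A(b)$ is the quantum-affine-space-type algebra on the generators $\bar c_{is}$, $1\le i\le n$, $1\le s\le b_i$, with purely $q$-commuting relations $\bar c_{js}\bar c_{ir} = \gamma\, \bar c_{ir}\bar c_{js}$ for suitable nonzero scalars $\gamma\in\{\alpha,\alpha^{-1},\beta,\beta^{-1},\alpha^{-1}\beta,\alpha\beta^{-1},1\}$ read off from \eqref{relations} (the inhomogeneous term in the last family disappears in the associated graded because it is lex-lower). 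Such a skew-polynomial ring — an iterated Ore extension of $\k$ by the monomials of $\mathcal{B}$, each step adjoining one generator with an automorphism twist and zero derivation — is a classical example of a domain: an iterated Ore extension $R[t;\sigma,0]$ of a domain $R$ by an automorphism $\sigma$ is again a domain, and one proceeds by induction on the number of generators.

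Finally I would lift the conclusion from $\mathrm{gr}\,A(b)$ to $A(b)$: if $u,v\in A(b)$ are nonzero, write $u = u_0 + (\text{lex-lower})$ and $v = v_0 + (\text{lex-lower})$ with $u_0,v_0$ the respective leading terms; then the leading term of $uv$ is the leading term of $u_0 v_0$, which in $\mathrm{gr}\,A(b)$ is the product of the nonzero images $\bar u_0$, $\bar v_0$, hence nonzero since $\mathrm{gr}\,A(b)$ is a domain. Therefore $uv\ne 0$. The only real point requiring care — and the step I expect to be the main obstacle to state cleanly — is verifying that the rewriting system underlying \eqref{relations} is genuinely lex-decreasing in the sense needed, i.e.\ that the filtration by lex order is multiplicative and that $\mathrm{gr}\,A(b)$ really is the $q$-commutative algebra with no surviving inhomogeneous corrections; but this is exactly the content of the Gröbner basis property established in Theorems~\ref{groebner} and~\ref{basis}, so it amounts to inspecting the four families of relations one by one and checking that on each the leading monomial dominates (lexicographically) every other term, which it does by the way the relations are written.
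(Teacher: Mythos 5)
Your overall strategy is the same as the paper's: Theorem~\ref{basis} gives the monomial basis $\left\{[c^\omega]_b\right\}$, and one shows that the product of two basis monomials has a distinguished extreme term $\alpha^s\beta^t[c^{\omega+\tau}]_b$ with nonzero coefficient, whence products of nonzero elements are nonzero. (The paper states exactly this in one sentence and leaves the verification as ``one can check''.) The associated-graded/Ore-extension packaging you add is a reasonable way to organise that verification, and the Ore-extension step itself is fine.

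However, there is a concrete directional error in your key step. You claim that in the fourth family of relations the extra term $x_{is}x_{jr}$ is lexicographically lower than the sorted term $x_{ir}x_{js}$ and therefore ``disappears in the associated graded''. With the order defined in the paper ($x_{11}<x_{12}<\dots<x_{1n}<x_{21}<\dots$, words compared left to right, normal-form monomials written with letters increasing), the opposite holds: $x_{is}x_{jr}>x_{ir}x_{js}$ because $x_{is}>x_{ir}$. Concretely, for $n=2$ one has $c_{22}c_{11}=c_{11}c_{22}+(\beta-\alpha^{-1})c_{12}c_{21}$, and $x_{12}x_{21}>x_{11}x_{22}$, so the lex-largest term of this product is $(\beta-\alpha^{-1})c_{12}c_{21}$, not $c^{\omega+\tau}=c_{11}c_{22}$. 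Hence your top-term filtration is not multiplicative in the way you need, and its associated graded is not the purely $q$-commuting skew polynomial ring (unless $\beta=\alpha^{-1}$); the Gr\"obner property only guarantees that the right-hand side of each relation is dominated by its left-hand side, not that the sorted term dominates the antidiagonal term. The argument is repaired by working with \emph{lowest} terms (equivalently, reversing the order on normal-form monomials, or filtering by the antidiagonal statistic $J(\omega)$): the term $c^{\omega+\tau}$ is the \emph{minimal} term of $[c^\omega]_b[c^\tau]_b$, it occurs with coefficient a nonzero monomial in $\alpha^{\pm1},\beta^{\pm1}$, and minimal terms multiply. With that flip your associated graded really is an iterated Ore extension, hence a domain, and the proof closes exactly as in the paper.
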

\begin{proof}
Consider the subset $S''$ of $ F(n)$ defined in Theorem~\ref{basis}. We have
that $S''$ is a Gr\"obner basis of the
kernel of the canonical projection $ F(n)\twoheadrightarrow A(b)$. One can check
that the leading term of (the reduced expression for) $[c^{\omega}]_b[ c^{\tau}]_b$ is
$[c^{\omega + \tau}]_b$ multiplied by $\alpha^s \beta^t$, for suitable
$s$, $t\in \N$.
Thus given two non-zero elements in $A(b)$ with leading monomials $[c^\omega]_b$ and
$[c^\tau]_b$, respectively, we get that  their product has leading monomial
$[c^{\omega + \tau}]_b$ and so it is non-zero.
\end{proof}

The algebra $A(n)$ has a unique
structure of  bialgebra with comultiplication $\comult\colon A(n) \to
A(n)\otimes A(n)$ and counit $\counit\colon A(n)\to \k$, satisfying
\begin{equation*}
\comult(c_{ij}) = \sum_{k=1}^n c_{ik} \otimes c_{kj}, \quad \counit(c_{ij}) =
\begin{cases}
1, & i=j\\
0, & i\not=j.
\end{cases}
\end{equation*}

The next theorem, proved in~\cite{parshall}, allows us to identify the coalgebra
$A(n)= A_{\alpha, \beta}(n)$ with the coalgebra $A_{1, \alpha \beta}(n)$
studied in  \cite{dipperdonkin, donkin1}.

For a matrix $\omega \in M_n(\N)$, we denote by $J(\omega)$
the number
\begin{equation*}
\sum_{i<j, s<t} \omega_{it} \omega_{js}.
\end{equation*}

\begin{theorem}[{\cite[Proposition~2.1]{parshall}}]\label{bialgebra-isomorphism}
Suppose that $\alpha$, $\beta$, $\alpha'$, $\beta'$ are non-zero elements in
$\k$ such that $\alpha'\beta' = \alpha\beta$.
Then the map
\begin{equation*}
\begin{aligned}
A_{\alpha,\beta}(n) & \to A_{\alpha',\beta'} (n) \\
c^{\omega} &\mapsto (\alpha/\alpha')^{J(\omega)} c^{\omega}
\end{aligned}
\end{equation*}
is an isomorphism of coalgebras.
\end{theorem}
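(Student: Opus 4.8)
The plan is to check that the given map is well-defined, bijective, and compatible with comultiplication and counit, using the explicit basis $\{c^{\omega}\}$ provided by Theorem~\ref{groebner}. First I would observe that, because $\{c^{\omega}\mid \omega\in M_n(\N)\}$ is a $\k$-basis of $A_{\alpha,\beta}(n)$ and likewise for $A_{\alpha',\beta'}(n)$, the prescription $c^{\omega}\mapsto (\alpha/\alpha')^{J(\omega)}c^{\omega}$ defines a $\k$-linear map which is visibly invertible, with inverse $c^{\omega}\mapsto(\alpha'/\alpha)^{J(\omega)}c^{\omega}$. So the only content is that this linear bijection intertwines the coalgebra structures. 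For the counit this is immediate: $J(\omega)=0$ whenever $c^{\omega}=c_{ij}$ is a single generator (there are no pairs $i<j$, $s<t$ available in a monomial of total degree one), and more generally $\counit$ kills every $c^{\omega}$ of positive degree on both sides while fixing $c^{\mathbf 0}=1$, with $J(\mathbf 0)=0$.

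For comultiplication the key is to understand $\comult(c^{\omega})$ in the basis $\{c^{\sigma}\otimes c^{\tau}\}$. Since $\comult$ is an algebra homomorphism and $\comult(c_{ij})=\sum_k c_{ik}\otimes c_{kj}$, expanding $\comult(c^{\omega})$ produces a sum over ways of ``routing'' the matrix $\omega$ through intermediate indices, and each resulting term $c^{\sigma}\otimes c^{\tau}$ must be renormalised using the defining relations~\eqref{relations} to bring the factors into the fixed monomial order. Each reordering step (an application of one of the quadratic relations) contributes a factor that is a monomial in $\alpha$, $\beta$; when one collects the net exponent of $\alpha$ across a whole term and of $\beta$ across a whole term, only the combination $\alpha\beta$ survives in a way that is the same for $A_{\alpha,\beta}(n)$ and $A_{\alpha',\beta'}(n)$ — this is exactly the statement behind Theorem~\ref{bialgebra-isomorphism} being an isomorphism onto $A_{1,\alpha\beta}(n)$ for any choice of $\alpha,\beta$ with fixed product. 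The bookkeeping that makes the map a coalgebra morphism is precisely that the discrepancy between the structure constants of $\comult$ on $A_{\alpha,\beta}(n)$ and on $A_{\alpha',\beta'}(n)$, when a term $c^{\sigma}\otimes c^{\tau}$ appears in $\comult(c^{\omega})$, is exactly $(\alpha/\alpha')^{J(\sigma)+J(\tau)-J(\omega)}$; once this identity of structure constants is in hand, $(\phi\otimes\phi)\circ\comult_{\alpha,\beta}=\comult_{\alpha',\beta'}\circ\phi$ follows by comparing coefficients on basis elements.

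The main obstacle is therefore the combinatorial identity $J(\sigma)+J(\tau)-J(\omega)$ tracking the total reordering cost for each term of $\comult(c^{\omega})$. I would handle this by reducing, as in the proof of Theorem~\ref{groebner}, to the case of few row and column indices: $\comult$ being multiplicative, it suffices to verify the normalisation factor on products of the generators $c_{ij}$, and the reordering identities only ever involve two rows and two columns at a time (the relations in~\eqref{relations} are quadratic in the $x_{ij}$). So one reduces to an $n\le 2$ computation, where $J(\omega)=\omega_{12}\omega_{21}$, and checks directly that commuting two generators past each other in $\comult(c^{\omega})$ produces the factor predicted by the difference of $J$-values; the general case then follows by induction on the degree of $\omega$, peeling off one generator at a time and using that $J$ is ``quadratic'' so that $J(\omega+e_{ij})-J(\omega)$ is linear in $\omega$ and matches the cost of inserting one more generator. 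Since the referenced Proposition~2.1 of~\cite{parshall} already asserts the result, in the write-up I would keep this verification brief, emphasising the reduction to $n\le 2$ and the role of the function $J$ as the ``order-reversal statistic'' for the lexicographic monomial order.
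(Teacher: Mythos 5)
The paper does not actually prove this statement: it is imported verbatim as Proposition~2.1 of \cite{parshall}, so there is no internal proof to measure you against. Judged on its own terms, your plan has the right core idea --- expand $\comult(c^{\omega})$ in the basis $\{c^{\sigma}\otimes c^{\tau}\}$ and show that the structure constants of $A_{\alpha,\beta}(n)$ and $A_{\alpha',\beta'}(n)$ differ by exactly $(\alpha/\alpha')^{J(\sigma)+J(\tau)-J(\omega)}$ --- but the execution has concrete gaps. The most serious is your assertion that ``each reordering step contributes a factor that is a monomial in $\alpha,\beta$.'' That is false for the fourth relation in~\eqref{relations}: straightening $x_{js}x_{ir}$ produces $x_{ir}x_{js}$ \emph{plus} the additional term $(\beta-\alpha^{-1})x_{is}x_{jr}$, so the straightening is not a rescaling of a single monomial. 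Handling this extra term is precisely where the hypothesis $\alpha\beta=\alpha'\beta'$ enters: one needs $\beta-\alpha^{-1}=\alpha^{-1}(\alpha\beta-1)$ together with the $J$-bookkeeping $J(e_{is}+e_{jr})=J(e_{ir}+e_{js})+1$ (for $i<j$, $r<s$) to see that the rescaled relations of $A_{\alpha,\beta}(n)$ go to those of $A_{\alpha',\beta'}(n)$. Your sketch never confronts this term, yet it is the only place the product $\alpha\beta$ is actually used.

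Two further points. First, your counit argument is wrong as stated: $\counit$ does not kill every $c^{\omega}$ of positive degree, since $\counit(c_{ii})=1$; the correct observation is that $\counit(c^{\omega})=1$ exactly when $\omega$ is diagonal, and $J$ vanishes on diagonal matrices, so compatibility still holds. Second, the proposed reduction to $n\le 2$ does not transfer from Theorem~\ref{groebner} to the coproduct: $\comult(c_{ij})=\sum_{k}c_{ik}\otimes c_{kj}$ involves all $n$ intermediate indices, and the induction ``peeling off one generator'' is delicate because $c^{\omega}c_{ij}$ is not $c^{\omega+e_{ij}}$ but rather a scalar multiple of it plus lower-order terms arising from the fourth relation. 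A cleaner route, and in effect what \cite{parshall} does, is to verify directly that the rescaling $c^{\omega}\mapsto(\alpha/\alpha')^{J(\omega)}c^{\omega}$ sends the ideal $I_{\alpha,\beta}$ onto $I_{\alpha',\beta'}$ relation by relation (this is a finite check on the four families of quadratic relations, using the identity above), and then note that the induced linear bijection commutes with the coproduct because the coproduct is defined by the same formula on generators in both algebras.
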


Before we proceed, we need to introduce some notation concerning sequences of
natural numbers. We denote by  $v_{l}$ the $n$-tuple $(0, \cdots,0,1,0 , \cdots , 0)$ ($1$ in the $l$th position). Given a composition $\lambda = (\lambda_1, \dots, \lambda_m)$
of $n$, we write
\begin{equation}
\label{composition}
(\lambda_1^{\lambda_1},
(\lambda_1+\lambda_2)^{\lambda_2}, \dots, n^{\lambda_m})
\end{equation}
for
\begin{equation*}
(\lambda_1, \dots, \lambda_1, \lambda_1 + \lambda_2, \dots,
\lambda_1+\lambda_2,\dots, n,\dots,n),
\end{equation*}
where
$\lambda_1 + \dots + \lambda_k$ is repeated $\lambda_k$ times.
For $\lambda = (1^n)$, we obtain the sequence $\delta = (1,2,\dots,n)$.
If $\lambda = (1^{l-1}, 2, 1^{n-l-1})$, for some natural number $1\leq l \leq n-1$, we denote the corresponding sequence by
$a[l]$. Thus
\begin{equation*}
a[l] = (1,2,\dots, l-1,l+1,l+1,l+2,\dots, n) = \delta + v_{l}.
\end{equation*}
\begin{remark}
\label{rem:bialgebras}
Suppose that $b = (\lambda_1^{\lambda_1},
(\lambda_1+\lambda_2)^{\lambda_2}, \dots, n^{\lambda_m})
$ for some composition $\lambda$ of $n$.
By \cite[Proposition~2.3]{donkin1}.
$I_{1,q}(b)$ is a biideal of $A_{1,q}(n)$, $q\in \k^*$. Combining this fact
with Theorem~\ref{bialgebra-isomorphism}, we see that
$I_{\alpha,\beta}(b)$ is a coideal, for every $\alpha$, $\beta\in \k^*$. As
$I_{\alpha,\beta}(b)$ is an ideal by definition, we get that
$A_{\alpha,\beta}(b)$ is a bialgebra.
In particular, $A_{\alpha,\beta}[l] := A_{\alpha,\beta} (a[l])$ and
$A_{\alpha,\beta}(\delta)$ are bialgebras.
\end{remark}
\begin{proposition}
\label{first}
Suppose that $b=(b_1,\dots,b_n)\in \n^n$ satisfies $b_k\not= l$ for all $k$. Then
$\comult(I(b)) \subset I(b)\otimes A(n) + A(n)\otimes I(\al)$. In particular,
$A(b)$ is an $A[l]$-comodule with the coaction given by
\begin{equation*}
\begin{aligned}
A(b) & \to A(b)\otimes A[l]\\
[x]_b & \mapsto\sum [x_{(1)}]_b \otimes [x_{(2)}]_{a[l]}.
\end{aligned}
\end{equation*}
\end{proposition}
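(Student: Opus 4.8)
The plan is to verify the inclusion on a generating set of the ideal $I(b)$, and then to read off the comodule structure by passing to quotients. Put $K := I(b)\otimes A(n) + A(n)\otimes I(\al)$. Since $I(b)$ and $I(\al)$ are two-sided ideals of $A(n)$, the subspace $K$ is a two-sided ideal of $A(n)\otimes A(n)$. As $\comult$ is an algebra homomorphism with $\comult(1)=1\otimes 1$, and $I(b)$ is generated as a two-sided ideal by $\{\,c_{is}\mid s>b_i\,\}$, the inclusion $\comult(I(b))\subset K$ follows once one checks that $\comult(c_{is})\in K$ for every $i,s$ with $s>b_i$.

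The second step is this verification. Fix $i,s$ with $s>b_i$; then $\comult(c_{is})=\sum_{k=1}^{n}c_{ik}\otimes c_{ks}$, and I would argue that each summand lies in $K$. If $k>b_i$, then $c_{ik}\in I(b)$ and hence $c_{ik}\otimes c_{ks}\in I(b)\otimes A(n)\subset K$. If $k\le b_i$, I claim instead that $c_{ks}\in I(\al)$, that is, $s>a[l]_k$, where $a[l]_k=k$ for $k\ne l$ and $a[l]_l=l+1$. For $k\ne l$ this is immediate: $a[l]_k=k\le b_i<s$. For $k=l$ we have $l=k\le b_i$ together with $b_i\ne l$, hence $b_i\ge l+1$, so $s>b_i\ge l+1$ forces $s\ge l+2>l+1=a[l]_l$. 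In either case $c_{ik}\otimes c_{ks}\in A(n)\otimes I(\al)\subset K$, which proves the first assertion. I expect the case $k=l$ to be the only genuine difficulty here: it is the single place where the hypothesis $b_k\ne l$ is used, the remainder being formal bookkeeping with ideals.

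For the final assertion, recall from Remark~\ref{rem:bialgebras} that $A[l]$ is a bialgebra, so in particular the canonical projection $p\colon A(n)\to A[l]$ is a morphism of coalgebras; write $\pi\colon A(n)\to A(b)$ for the other canonical projection. The composite $A(n)\xrightarrow{\comult}A(n)\otimes A(n)\xrightarrow{\pi\otimes p}A(b)\otimes A[l]$ annihilates $I(b)$, since $\comult(I(b))\subset K$ and $(\pi\otimes p)(K)=0$; hence it factors through a linear map $\coaction\colon A(b)\to A(b)\otimes A[l]$ which by construction sends $[x]_b$ to $\sum[x_{(1)}]_b\otimes[x_{(2)}]_{a[l]}$. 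It then remains to check the comodule axioms, $(\coaction\otimes\id_{A[l]})\coaction=(\id_{A(b)}\otimes\comult_{A[l]})\coaction$ and $(\id_{A(b)}\otimes\counit_{A[l]})\coaction=\id_{A(b)}$; both follow by precomposing with the surjection $\pi$ and invoking, respectively, coassociativity of $\comult$ on $A(n)$ together with $(p\otimes p)\comult=\comult_{A[l]}p$, and the counit axiom on $A(n)$ together with $\counit_{A[l]}p=\counit_{A(n)}$. This exhibits $A(b)$ as a right $A[l]$-comodule with the stated coaction.
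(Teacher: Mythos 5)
Your proposal is correct and follows essentially the same route as the paper: both reduce to checking $\comult(c_{is})\in I(b)\otimes A(n)+A(n)\otimes I(\al)$ for the generators with $s>b_i$, split the sum $\sum_k c_{ik}\otimes c_{ks}$ at $k\le b_i$ versus $k>b_i$, and use the hypothesis $b_i\neq l$ precisely to rule out the exceptional pair $(k,s)=(l,l+1)$. Your additional explicit checks (that the target is an ideal so generators suffice, and that the factored map satisfies the comodule axioms) are details the paper leaves implicit.
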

\begin{proof}
Let $c_{ij}\in I(b)$. Then $j>b_i$ and
\begin{equation*}
\comult(c_{ij}) = \sum_{k>b_i} c_{ik}\otimes c_{kj} + \sum_{k=1}^{b_i}
c_{ik}\otimes c_{kj}.
\end{equation*}
For $k>b_i$, we get that $c_{ik} \in I(b)$, and so the elements of the first sum
are in $I(b)\otimes A(n)$. For $k\le b_i$, we get $j>b_i\ge k$.
It is easy to see that $c_{kj}\in I(\al)$ if and only if $j>k$ and
$(k,j)\not=(l,l+1)$. Suppose $(k,j)=(l,l+1)$. Then
\begin{equation*}
j = l+1 > b_i \ge k = l
\end{equation*}
implies $b_i = l$, which contradicts our assumption on $b$. Therefore
$c_{kj}\in I(\al)$ for all $k\le b_i$.
\end{proof}
With a proof similar to the above one, we obtain the following result for $A(b)$ and the bialgebra~$A(\delta)$.
\begin{proposition}
For any  $b= (b_1,\dots, b_n)\in \n^n$, we have $\comult(I(b)) \subset I(b)\otimes
A(n) + A(n)\otimes I(\delta)$. In particular, $A(b)$ has a structure of
$A(\delta)$-comodule
with the coaction given by
\begin{equation*}
[x]_{b} \mapsto \sum [x_{(1)}]_{b}\otimes [x_{(2)}]_{\delta}.
\end{equation*}
\end{proposition}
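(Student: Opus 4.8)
The plan is to follow the proof of Proposition~\ref{first} almost verbatim; the present case is in fact easier, because $\delta=(1,2,\dots,n)$ has $\delta_k=k$, so no exceptional pair has to be excluded. The first step is to reduce the asserted containment to a statement about the generators of $I(b)$. By definition $I(b)$ is the ideal of $A(n)$ generated by the elements $c_{ij}$ with $j>b_i$, and $\comult$ is an algebra homomorphism. Moreover $I(b)\otimes A(n)+A(n)\otimes I(\delta)$ is itself an ideal of $A(n)\otimes A(n)$: it is precisely the kernel of the algebra map $A(n)\otimes A(n)\to A(b)\otimes A(\delta)$ obtained by tensoring the two canonical projections (this uses only that $I(\delta)$ is an ideal, so that $A(\delta)$ is an algebra). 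Hence it suffices to show $\comult(c_{ij})\in I(b)\otimes A(n)+A(n)\otimes I(\delta)$ for each $i,j$ with $j>b_i$.

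For such $i,j$ we compute
\begin{equation*}
\comult(c_{ij})=\sum_{k=1}^{n}c_{ik}\otimes c_{kj}
=\sum_{k>b_i}c_{ik}\otimes c_{kj}+\sum_{k\le b_i}c_{ik}\otimes c_{kj}.
\end{equation*}
In the first sum $k>b_i$ forces $c_{ik}\in I(b)$, so each term lies in $I(b)\otimes A(n)$; in the second sum $k\le b_i<j$ forces $j>k=\delta_k$, so $c_{kj}\in I(\delta)$ and each term lies in $A(n)\otimes I(\delta)$. This establishes $\comult(I(b))\subset I(b)\otimes A(n)+A(n)\otimes I(\delta)$.

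It remains to extract the comodule structure. The composite $A(n)\xrightarrow{\comult}A(n)\otimes A(n)\twoheadrightarrow A(b)\otimes A(\delta)$ annihilates $I(b)$ by the above, hence factors uniquely through a $\k$-linear map $\coaction\colon A(b)\to A(b)\otimes A(\delta)$, $[x]_b\mapsto\sum[x_{(1)}]_b\otimes[x_{(2)}]_\delta$. To see that $\coaction$ is a coaction one checks $(\coaction\otimes\id)\coaction=(\id\otimes\comult_{A(\delta)})\coaction$ and $(\id\otimes\counit_{A(\delta)})\coaction=\id$; both follow by pushing the coassociativity and counit axioms of $\comult$ and $\counit$ on $A(n)$ through the canonical surjections, using that $A(\delta)$ is a bialgebra (Remark~\ref{rem:bialgebras} with $\lambda=(1^n)$) so that $\comult_{A(\delta)}$ and $\counit_{A(\delta)}$ are available and the projection $A(n)\to A(\delta)$ is a coalgebra map. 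This final verification is a routine diagram chase. The only point that deserves any attention — the ``main obstacle'', modest as it is — is the reduction to generators, i.e.\ recognising that $I(b)\otimes A(n)+A(n)\otimes I(\delta)$ is multiplicatively closed; once this is phrased through the kernel above, the rest is a direct transcription of Proposition~\ref{first}, with the sub-case $(k,j)=(l,l+1)$ simply not arising.
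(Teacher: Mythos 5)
Your proposal is correct and follows essentially the same route as the paper, which simply declares the result to have ``a proof similar to'' that of Proposition~4.7: split $\comult(c_{ij})=\sum_{k>b_i}c_{ik}\otimes c_{kj}+\sum_{k\le b_i}c_{ik}\otimes c_{kj}$ and observe that for $\delta$ the exceptional pair $(k,j)=(l,l+1)$ never arises. The extra details you supply (the kernel description of $I(b)\otimes A(n)+A(n)\otimes I(\delta)$ justifying the reduction to generators, and the verification of the coaction axioms) are routine points the paper leaves implicit, and they are handled correctly.
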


Note that the $A(\delta)$-coaction on $A(b)$ is multiplicative in the sense of
the following proposition.
\begin{proposition}
\label{measure}
 Denote by $\coaction$ the coaction of $A(\delta)$ both on $A(b)$ and on  $A(b)\otimes
A(b)$, and by $\mu$ the multiplication in the algebras $A(n)$ and $A(b)$. Then the following diagram is commutative
\begin{equation*}
\xymatrix{
A(b) \otimes A(b) \ar[r]^-{\coaction} \ar[d]_{\mu} &
 A(b)\otimes A(b) \otimes A(\delta) \ar[d]^{\mu\otimes A(\delta)}\\
A(b) \ar[r]^-{\coaction} & A(b) \otimes A(\delta).
}
\end{equation*}
\end{proposition}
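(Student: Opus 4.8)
The plan is to verify the commutativity of the stated diagram by unravelling both composites on a general elementary tensor $[x]_b \otimes [y]_b$ and checking that they agree, using the explicit formula for the $A(\delta)$-coaction $\coaction([z]_b) = \sum [z_{(1)}]_b \otimes [z_{(2)}]_\delta$ together with the fact that $\comult$ is an algebra homomorphism on $A(n)$ (and descends to $A(b)$ by the previous proposition). First I would compute the lower-left path: $\coaction(\mu([x]_b\otimes[y]_b)) = \coaction([xy]_b) = \sum [(xy)_{(1)}]_b \otimes [(xy)_{(2)}]_\delta$, and then rewrite $(xy)_{(1)} \otimes (xy)_{(2)}$ using multiplicativity of the comultiplication in $A(n)$, namely $\comult(xy) = \comult(x)\comult(y) = \sum x_{(1)} y_{(1)} \otimes x_{(2)} y_{(2)}$. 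Passing to the appropriate quotients this yields $\sum [x_{(1)} y_{(1)}]_b \otimes [x_{(2)} y_{(2)}]_\delta$.

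Next I would compute the upper-right path: apply $\coaction \otimes \coaction$-style map $\coaction$ to $[x]_b\otimes[y]_b$ first — being careful that here $\coaction$ denotes the diagonal coaction of $A(\delta)$ on $A(b)\otimes A(b)$, i.e. the one coming from the tensor product of comodules, given by $[x]_b\otimes[y]_b \mapsto \sum [x_{(1)}]_b\otimes[y_{(1)}]_b \otimes [x_{(2)}]_\delta [y_{(2)}]_\delta$ — and then apply $\mu\otimes A(\delta)$ to get $\sum [x_{(1)} y_{(1)}]_b \otimes [x_{(2)}]_\delta [y_{(2)}]_\delta$. Since $[x_{(2)} y_{(2)}]_\delta = [x_{(2)}]_\delta [y_{(2)}]_\delta$ (the projection $A(n)\to A(\delta)$ is an algebra map, as $I(\delta)$ is an ideal), the two expressions coincide, which establishes the commutativity. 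The main subtlety, and the only genuine obstacle, is bookkeeping: one must make sure the coaction on the top-left corner $A(b)\otimes A(b)$ is interpreted as the comodule tensor-product coaction (whose $A(\delta)$-component multiplies the two ``second-leg'' factors), rather than as $\coaction\otimes\coaction$ landing in $A(b)\otimes A(\delta)\otimes A(b)\otimes A(\delta)$; once that is fixed the diagram chase is immediate. I would also note explicitly that everything is well defined on $A(b)$ because of the coideal property $\comult(I(b))\subseteq I(b)\otimes A(n) + A(n)\otimes I(\delta)$ from the previous proposition, so the Sweedler components $[x_{(1)}]_b$ and $[x_{(2)}]_\delta$ make sense independently of the chosen lift $x$ of $[x]_b$.

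Alternatively — and this is perhaps cleaner — one can phrase the argument conceptually: the proposition asserts precisely that the multiplication $\mu\colon A(b)\otimes A(b)\to A(b)$ is a morphism of $A(\delta)$-comodules, where $A(b)\otimes A(b)$ carries the tensor-product comodule structure. This is an instance of the general fact that if a bialgebra $A(\delta)$ coacts on an algebra $A(b)$ in such a way that the coaction is itself an algebra homomorphism $A(b)\to A(b)\otimes A(\delta)$ (which holds here because $\coaction$ is the composite of $\comult$ with the algebra projections, both algebra maps), then multiplication is automatically comodule-linear. I would present the explicit elementary-tensor computation as the actual proof for concreteness, but remark that it amounts to this observation.
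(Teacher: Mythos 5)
Your proof is correct and is essentially the paper's own argument: both reduce the claim to the multiplicativity of $\comult$ on the bialgebra $A(n)$ and then descend along the surjective projections $A(n)\to A(b)$ and $A(n)\to A(\delta)$ (the paper phrases this as a commutative diagram whose exterior square commutes because the upper-left diagonal arrow is onto, while you carry out the same computation element-wise on lifts). Your explicit identification of the top-left coaction as the tensor-product comodule coaction, with the two second legs multiplied in $A(\delta)$, matches the paper's map $h$ exactly.
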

\begin{proof}
Let  $h$ denote the following composition of maps
\begin{equation*}
\xymatrix@R6ex@C6em{
A(n) \otimes A(n) \ar[r]^-{\comult\otimes \comult} &
A(n) \otimes A(n) \otimes A(n) \otimes A(n)
\ar`[l]+`[l]+`[ld]_-{\tau_{23}}[ld] &\\
A(n) \otimes A(n) \otimes A(n) \otimes A(n) \ar[r]^-{A(n)\otimes A(n) \otimes \mu} &
A(n) \otimes A(n) \otimes A(n) ,
}
\end{equation*} where $\tau_{23}$ is the twist map of the second and third
factors of $A(n) \otimes A(n) \otimes A(n) \otimes A(n)$.
Then we have the  diagram
\begin{equation*}
\xymatrix@C1em{
A(b)\otimes A(b) \ar[rrr]^-{\coaction } \ar[ddd]_{\mu} & & &
A(b)  \otimes A(b)\otimes A(\delta) \ar[ddd]^{\mu\otimes A(\delta)}
\\ &
A(n) \otimes A(n) \ar@{->>}[lu] \ar[r]^-h  \ar[d]_{\mu} &  A(n) \otimes A(n) \otimes
A(n) \ar@{->>}[ru] \ar[d]^{\mu \otimes A(n)}
  \\
& A(n) \ar[r]^{\comult}\ar@{->>}[ld] & A(n) \otimes A(n)\ar@{->>}[rd]
\\ A(b) \ar[rrr]^-{\coaction}  & &  & A(b) \otimes A(\delta).
}
\end{equation*}
The internal square in the above diagram commutes since $A(n)$ is a bialgebra.
The trapezoids commute by the definition of the $A(\delta)$-coaction on $A(b)$ and of
the multiplication on $A(b)$. Since the upper-left diagonal arrow is surjective, we
conclude that the exterior square is also commutative.
\end{proof}

Let  $a= (a_1, \cdots, a_n)\in \N^n$.
We denote by   $\k_a$     the  1-dimensional  $A(\delta)$-comodule with underlying vector space $\k$  and structure map
\begin{equation}
\label{onedim}
\begin{aligned}
\k_a& \to \k_a \otimes A(\delta)\\
1 \ & \mapsto 1 \otimes [c_{11}^{a_1}\cdots c_{nn}^{a_n}]_\delta.
\end{aligned}
\end{equation}
Given  $b= (b_1,\dots, b_n) \in \n^n$, consider the map
\begin{align*}
f\colon  A(b)\otimes \k_{v_{b_l}}& \to A(b)\\
x \otimes 1 & \mapsto x[c_{l,b_l}]_b.
\end{align*}
\begin{proposition}
\label{prop:injective}
If $b$ is non-decreasing, then the  map $f$ defined above is an injective homomorphism of $A(\delta)$-comodules.
\end{proposition}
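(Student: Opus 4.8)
The plan is to prove two things: that $f$ is a map of $A(\delta)$-comodules, and that it is injective.

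\emph{Comodule map.} For the first part, I would use the multiplicativity of the $A(\delta)$-coaction established in Proposition~\ref{measure}. Write $\coaction$ for the coaction of $A(\delta)$ on $A(b)$. The map $f$ is right multiplication by the element $\xi := [c_{l,b_l}]_b$, tensored with the identification $A(b)\otimes\k_{v_{b_l}}\cong A(b)$ of underlying vector spaces. By Proposition~\ref{measure}, for $x\in A(b)$ we have $\coaction(x\xi) = \sum [x_{(1)}]_b[\xi_{(1)}]_b \otimes [x_{(2)}]_\delta[\xi_{(2)}]_\delta$, where I have written $\coaction(x) = \sum [x_{(1)}]_b\otimes[x_{(2)}]_\delta$ and similarly for $\xi$. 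Now the diagonal entry $c_{l,b_l}$ has $\comult(c_{l,b_l}) = \sum_k c_{lk}\otimes c_{k,b_l}$, and after projecting to $A(b)\otimes A(\delta)$ the only surviving term is $c_{l,b_l}\otimes c_{b_l,b_l}$: indeed $[c_{lk}]_b = 0$ when $k > b_l$, and $[c_{k,b_l}]_\delta = 0$ when $k \neq b_l$ since $b_l$ is the diagonal position in $\delta = (1,2,\dots,n)$ only for the index $k = b_l$, while for $k < b_l$ one has $k \neq b_l$ so $c_{k,b_l}$ lies in $I(\delta)$ (as $b_l > k$ and $(k,b_l)$ is not of the form $(j,j)$). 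Hence $\coaction(\xi) = \xi \otimes [c_{b_l,b_l}]_\delta$, i.e. $\xi$ is a group-like-type element with weight $v_{b_l}$. Therefore $\coaction(x\xi) = \sum [x_{(1)}]_b\xi \otimes [x_{(2)}]_\delta[c_{b_l,b_l}]_\delta$, which is exactly $(f\otimes A(\delta))$ applied to the coaction on $A(b)\otimes\k_{v_{b_l}}$ (recall the coaction on $\k_{v_{b_l}}$ multiplies the $A(\delta)$-component by $[c_{b_l,b_l}]_\delta$). This proves $f$ is a homomorphism of $A(\delta)$-comodules.

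\emph{Injectivity.} For this I would invoke Corollary~\ref{intdomain}: since $b$ is non-decreasing, $A(b)$ has no zero divisors. The map $f$ sends $x\otimes 1 \mapsto x\xi$ with $\xi = [c_{l,b_l}]_b$. Provided $\xi \neq 0$ in $A(b)$, right multiplication by $\xi$ in an integral domain is injective, hence $f$ is injective. That $\xi \neq 0$ follows from Theorem~\ref{basis}: the basis of $A(b)$ consists of the $[c^\omega]_b$ with $\omega_{is} = 0$ for $s > b_i$, and since $b$ is non-decreasing and $b_l \leq b_l$, the monomial $c_{l,b_l}$ satisfies this constraint, so $[c_{l,b_l}]_b$ is one of the basis elements and in particular nonzero.

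\emph{Main obstacle.} The only genuinely delicate point is the computation $\coaction(\xi) = \xi\otimes[c_{b_l,b_l}]_\delta$, i.e. checking that all off-diagonal terms of $\comult(c_{l,b_l})$ die after the two projections. This requires knowing precisely which $c_{kj}$ lie in $I(\delta)$ (off-diagonal, $j > k$) and which $c_{ik}$ lie in $I(b)$ ($k > b_i$) — essentially the bookkeeping already carried out in the proof of Proposition~\ref{first}. Once that vanishing is in hand, everything else is a formal consequence of Proposition~\ref{measure} and Corollary~\ref{intdomain}.
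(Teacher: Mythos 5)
Your proposal is correct and follows essentially the same route as the paper's own proof: injectivity from Corollary~\ref{intdomain}, and the comodule property from Proposition~\ref{measure} together with the computation that $\coaction([c_{l,b_l}]_b)=[c_{l,b_l}]_b\otimes[c_{b_l,b_l}]_\delta$, using $[c_{k,b_l}]_\delta=0$ for $k<b_l$ and $[c_{lk}]_b=0$ for $k>b_l$. The only differences are cosmetic (you additionally note $[c_{l,b_l}]_b\neq 0$ via Theorem~\ref{basis}, and your phrase ``$[c_{k,b_l}]_\delta=0$ when $k\neq b_l$'' overstates what holds for $k>b_l$, though that case is already covered by the other factor vanishing).
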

\begin{proof}
Note that, since there are no zero divisors in $A(b)$, $f $ is injective.

Denote by $\coaction$ the coaction of $A(\delta )$ on $A(b)$ and by $\rho_{b_l}$  the coaction of $A(\delta)$ on  $A(b)\otimes \k_{v_{b_l}}$.
Then, for any $x \in A(b)$, we have
\begin{equation*}
\rho_{b_l} (x\otimes 1) = \sum ([x_{(1)}]_b \otimes 1 ) \otimes [x_{(2)}]_\delta
[c_{b_l,b_l}]_\delta.
\end{equation*} Hence
\begin{equation}\label{axbl}
 f\otimes A(\delta)\left(\rho_{b_l} (x\otimes 1)\right)= \sum [x_{(1)}]_b[c_{l,b_l}]_b \otimes [x_{(2)}]_\delta
[c_{b_l,b_l}]_\delta.
\end{equation}
Further
\begin{equation*}
\label{clblb}
\coaction ([c_{l,b_{l}}]_b) = \sum_{k=1}^n [c_{lk}]_b\otimes [c_{k,b_l}]_\delta.
\end{equation*}
Now we have  $[c_{k,b_l}]_\delta =0$ for $k<b_l$ , and
$[c_{lk}]_b=0$ for $k>b_l$. Therefore  $\coaction ([c_{l,b_l}]_b) = [c_{l,b_l}]_b \otimes
[c_{b_l,b_l}]_\delta$.
Using this and Proposition~\ref{measure}, we get
\begin{equation}
\label{ackblb}
\coaction (x[c_{l,b_l}]_b) =  \sum [x_{(1)}]_b[c_{l,b_l}]_b \otimes [x_{(2)}]_\delta
[c_{b_l,b_l}]_\delta.
\end{equation}
Compairing \eqref{axbl} and \eqref{ackblb}, we see that $f$ is indeed a
homomorphism of $A(\delta)$-comodules.
\end{proof}

\begin{proposition}\label{minha}
Suppose that $b$ and $b-v_l \in \n^n$ are non-decreasing sequences. Then we have the following short
exact sequence
of $A(\delta)$-comodules
\begin{equation}\label{sequence}
\xymatrix{ 0 \ar[r] & A(b)\otimes \k_{v_{b_l}} \ar[r]^-{f} & A(b)
\ar[r]^-{\pi} & A(b-v_l)\ar[r] & 0}.
\end{equation}
\end{proposition}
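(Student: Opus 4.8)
The plan is to exhibit the sequence \eqref{sequence} as the result of quotienting the algebra $A(b)$ by the ideal generated by a single element, and then to track bases through the three objects using the Gr\"obner basis results of Theorem~\ref{basis}. First I would observe that the map $\pi$ is the canonical projection $A(b)\twoheadrightarrow A(b-v_l)$, which is a morphism of $A(\delta)$-comodules by the second displayed Proposition (with $b$ replaced by $b-v_l$, whose $A(\delta)$-coaction is compatible with the projections since both come from $\comult$ on $A(n)$). Its kernel is the ideal $I(b-v_l)/I(b)$ of $A(b)$, which is generated by the single image $[c_{l,b_l}]_b$: indeed $I(b-v_l)$ is generated over $A(n)$ by $\{c_{is}\mid s> (b-v_l)_i\}$, and since $b$ and $b-v_l$ differ only in position $l$ (where $(b-v_l)_l = b_l-1$), the only new generator relative to $I(b)$ is $c_{l,b_l}$.

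Next I would identify this kernel with the image of $f$. Since $b$ is non-decreasing, Corollary~\ref{intdomain} tells us $A(b)$ has no zero divisors, so $f$ is injective, and Proposition~\ref{prop:injective} already gives that $f$ is a homomorphism of $A(\delta)$-comodules. It remains to check that $\operatorname{im} f = \ker\pi$, i.e.\ that the two-sided ideal of $A(b)$ generated by $[c_{l,b_l}]_b$ equals the left (or right) ideal $A(b)[c_{l,b_l}]_b$ — in other words that right multiplication by $[c_{l,b_l}]_b$ already produces a two-sided ideal. Here I expect to use the commutation relations \eqref{relations}: modulo $I(b)$, one can move a factor $c_{l,b_l}$ past any generator $c_{ij}$ with $j\le b_i$ at the cost of scalars and of terms that themselves lie in $A(b)[c_{l,b_l}]_b$, because the only relation that would introduce a genuinely new term, the last one in \eqref{relations}, produces a summand $c_{is}c_{jr}$ whose indices force (as in the proof of Proposition~\ref{first} and Theorem~\ref{basis}) one of the factors to vanish in $A(b)$ or to again be a multiple of $[c_{l,b_l}]_b$. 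Cleanly, the quickest route is a dimension/basis count: by Theorem~\ref{basis}, $A(b)$ has basis $\{[c^\omega]_b : \omega_{is}=0 \text{ for } s>b_i\}$, $A(b-v_l)$ has basis $\{[c^\omega]_{b-v_l}: \omega_{is}=0 \text{ for } s> (b-v_l)_i\}$, and $A(b)\otimes\k_{v_{b_l}}\cong A(b)$ as a vector space; multiplication by $[c_{l,b_l}]_b$ sends the basis monomial $[c^\omega]_b$ to $[c^{\omega+E_{l,b_l}}]_b$ up to a nonzero scalar (this is the leading-term computation already used in the proof of Corollary~\ref{intdomain}), and these images are exactly the basis monomials of $A(b)$ with $\omega_{l,b_l}\ge 1$, which biject with the basis monomials killed by $\pi$. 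Hence $\operatorname{rank} f + \operatorname{rank}$ of the basis of $A(b-v_l)$ matches $\dim A(b)$ in each graded piece, giving exactness.

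So the key steps, in order, are: (1) identify $\pi$ as the canonical projection and a comodule map, with $\ker\pi$ the ideal generated by $[c_{l,b_l}]_b$; (2) invoke Corollary~\ref{intdomain} for injectivity of $f$ and Proposition~\ref{prop:injective} for $f$ being a comodule map; (3) show $\ker\pi = \operatorname{im} f$, i.e.\ the two-sided ideal $(\,[c_{l,b_l}]_b\,)$ coincides with $A(b)\,[c_{l,b_l}]_b$, via the leading-term behaviour of multiplication by $[c_{l,b_l}]_b$ on the monomial basis of Theorem~\ref{basis} together with the fact that $b$ non-decreasing forces the obstructing terms from the fourth relation in \eqref{relations} to vanish in $A(b)$ (exactly the mechanism in the proofs of Proposition~\ref{first} and Theorem~\ref{basis}). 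The main obstacle is step~(3): verifying that right multiplication by $[c_{l,b_l}]_b$ already yields a two-sided ideal, equivalently that the monomials $[c^{\omega}]_b$ with $\omega_{l,b_l}\ge 1$ are precisely the kernel of $\pi$. I would handle this by the explicit basis comparison rather than by manipulating relations directly, since Theorem~\ref{basis} makes both bases completely explicit and the scalar-twist multiplication formula from the proof of Corollary~\ref{intdomain} makes the bijection transparent.
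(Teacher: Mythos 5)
Your proposal is correct and follows essentially the same route as the paper: identify $\ker\pi$ via the monomial bases of Theorem~\ref{basis}, get injectivity and the comodule property of $f$ from Corollary~\ref{intdomain} and Proposition~\ref{prop:injective}, and show $\operatorname{im}f=\ker\pi$ by computing that $[c^{\omega}]_b\,[c_{l,b_l}]_b$ is a nonzero scalar multiple of $[c^{\omega+e_{l,b_l}}]_b$, the fourth relation in \eqref{relations} contributing nothing because $[c_{lj}]_b=0$ for $j>b_l$. One small caveat: Corollary~\ref{intdomain} only controls the \emph{leading} term of the product, so on its own it gives spanning of $\ker\pi$ only after a triangularity argument in each graded piece; the paper instead verifies the exact product formula by commuting $[c_{l,b_l}]_b$ past each factor of $[c^{\omega''}]_b$, which is the mechanism you also describe.
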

\begin{proof}

Clearly $I(b) \subset I(b-v_l)$. So we can consider the canonical projection $\pi\colon A(b)\twoheadrightarrow A(b-v_l)$.

By Proposition~\ref{basis} the sets $\left\{\, [c^\omega]_b \,\middle|\,
\omega_{ij}=0,\ j>b_i
\right\}$ and
\begin{equation*}
\left\{\, [c^\omega]_{b-v_l} \,\middle|\, \omega_{ij}=0,\ j>b_i
\,\,\mbox{if}\,\,i\neq l, \,\,\mbox{and}\,\,j> b_l-1, \,\,\mbox{if}\,\,i=l
 \right\}
\end{equation*}
 are basis of $A(b)$ and $A(b-v_l)$, respectively.
Therefore $\left\{\,[c^\omega]_b\,\middle|\,
\omega_{ij} = 0,\ j>b_i;\ \omega_{l,b_l}\not=0
\right\}$ is a basis of the kernel of $\pi$.

Let $\omega\in M_n(\N)$ be such that $\omega_{ij} =0$   for $j>b_i$. Then,  in
particular, $\omega_{l,b_l}$ is the last possible non-zero element in the
$l$th row of $\omega$. Define $\omega'\in M_n (\N)$ to be the matrix with the
same elements in the first $l$ rows as $\omega$ and zeros elsewhere. Denote
$\omega - \omega'$ by $\omega''$. Then
from the definition of $c^\omega$, we get $[c^\omega]_b = [c^{\omega'}]_b
[c^{\omega''}]_b $. Moreover, $[c^{\omega'}]_b [c_{l,b_l}]_b
[c^{\omega''}]_b = [c^{\omega + e_{l,b_l}}]_b $, where $e_{l,b_l}$ denotes the matrix with $1$ in position $(l,b_l)$ and zeros elsewhere. We claim that $[c^{\omega''}]_b
[c_{l,b_l}]_b = \alpha^s\beta^t [c_{l,b_l}]_b [c^{\omega''}]_b$ for suitable
integers $s$ and
$t$. In fact $[c^{\omega''}]_b$ is the product of the elements $[c_{ij}]_b$ with $i> l$. If $j< b_l$, we get
\begin{equation*}
[c_{ij}]_b [c_{l,b_l}]_b = \alpha^{-1} \beta [c_{l,b_l}]_b [c_{ij}]_b.
\end{equation*}
If $j=b_l$, then
\begin{equation*}
[c_{ij}]_b [c_{l,b_l}]_b = [c_{i,b_l}]_b [c_{l,b_l}]_b = \beta [c_{l,b_l}]_b
[c_{ij}]_b.
\end{equation*}
If $j>b_l$, then
\begin{equation*}
[c_{ij}]_b [c_{l,b_l}]_b =  [c_{l,b_l}]_b [c_{ij}]_b + (\beta-\alpha^{-1}) [c_{lj}]_b
[c_{i,b_l}]_b.
\end{equation*}
Since $j>b_l$, we get that $[c_{lj}]_b = 0$. Thus
\begin{equation*}
[c_{ij}]_b [c_{l,b_l}]_b =  [c_{l,b_l}]_b [c_{ij}]_b
\end{equation*}
in this last case. Therefore, we have $[c^{\omega}]_b
[c_{l,b_l}]_b = \alpha^s\beta^t [c^{\omega + e_{l,b_l}}]_b\,.$
This shows that the image of  $f$  and the kernel of $\pi$ coincide.
By Proposition~\ref{prop:injective}, the map $f$ is injective and
so~\eqref{sequence} is exact.
\end{proof}
Let $l(\sigma)$ denote the length of the permutation $\sigma \in
\Sigma_n$.
The quantum determinant is the element of $A(n)$ defined by
\begin{equation*}
\begin{aligned}
d = d_{\alpha,\beta} & = \sum_{\sigma\in \Sigma_n} (-\alpha)^{-l(\sigma)}
c_{1,\sigma(1)}c_{2,\sigma(2)}\dots
c_{n,\sigma(n)}\\
& = \sum_{\sigma\in \Sigma_n} (-\beta)^{-l(\sigma)}  c_{\sigma(1),1}
c_{\sigma(2), 2} \dots c_{\sigma(n),n}.
\end{aligned}
\end{equation*}
The determinant $d$ is a group-like element of $A(n)$, see~\cite{parshall}.
For every nondecreasing  $b\in \n^n$ such that $b_i\ge i$, we get that $[d]_b$ is a non-zero element of
$A(b)$ and so a non-zero divisor, by Corollary~\ref{intdomain}.
We also have $[c_{ij}]_b [d]_b = (\alpha^{-1}\beta)^{i-j} [d]_b [c_{ij}]_b$. Hence, we can
localize $A(b)$
with respect to $d$.
We will denote the resulting localization by $A(b)_d$. 
\begin{remark}
\label{rem:first}
Since $d$
is group-like this localization process preserves the coalgebra and comodule
structures. Therefore, $A(n)_d$, $A(\delta)_d$, $A[l]_d$ are bialgebras, $A(b)_d$
is an $A(\delta)_d$-comodule, and for $b$ such that $b_i\not= l\,,$ for all
$i$, $A(b)_d$ is an $A[l]_d$-comodule.
\end{remark}
The bialgebra $A(n)_d$ admits a Hopf algebra structure with the antipode given
by
\begin{equation*}
\antipode (c_{is})  = (-\beta)^{s-i} d^{-1} d_{si},
\end{equation*}
where $d_{si}$
denotes the quantum determinant of the subalgebra of $A(n)$ obtained by deleting
all generators $c_{sk}$
 and $c_{ki}$ with $1\le k\le n$  (see~\cite[(1.7)]{parshall}).

\begin{proposition}
Let $\lambda = (\lambda_1, \dots, \lambda_m)$ be a composition of $n$, and
\begin{equation*}
 b = (\lambda_1^{\lambda_1}, (\lambda_1 +
\lambda_2)^{\lambda_2}, \dots, n^{\lambda_m}).
\end{equation*}
Then, the kernel $J(\lambda)$ of the canonical projection $A(n)_d \to A(b)_d$ is a
Hopf ideal generated, as an ideal, by $\left\{\, c_{is} \,\middle|\, s> b_i
\right\}$.
Therefore $A(b)_d$ admits a Hopf algebra structure with the antipode given by
\begin{equation*}
\antipode ([c_{is}]_b)  = (-\beta)^{s-i} [d]_b^{-1} [d_{si}]_b.
\end{equation*}
\end{proposition}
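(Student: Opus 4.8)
The plan is to reduce the assertion about $A(b)_d$ to the corresponding assertion for $A(n)_d$, which is already available (the Hopf algebra structure on $A(n)_d$ has been recorded above, with antipode $\antipode(c_{is}) = (-\beta)^{s-i} d^{-1} d_{si}$). First I would identify $A(b)$ as the quotient $A(n)/I(b)$, where, by Remark~\ref{rem:bialgebras}, $I(b) = I_{\alpha,\beta}(b)$ is a biideal of $A(n)$ for $b$ of the stated form; hence $J(\lambda)$, the kernel of $A(n)_d \to A(b)_d$, is the image of $I(b)$ under localization, i.e.\ the ideal $I(b)_d$ generated in $A(n)_d$ by $\{\,c_{is} \mid s > b_i\,\}$. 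This already gives the description of $J(\lambda)$ as an ideal. It remains to show that $J(\lambda)$ is a Hopf ideal, i.e.\ that it is stable under the antipode $\antipode$ of $A(n)_d$ (it is automatically a coideal, since $I(b)$ is a coideal in $A(n)$ and $d$ is group-like, cf.\ Remark~\ref{rem:first}).

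The key step is therefore: $\antipode(c_{is}) \in J(\lambda)$ whenever $s > b_i$. Using the formula $\antipode(c_{is}) = (-\beta)^{s-i} d^{-1} d_{si}$, and since $d^{-1}$ is a unit in $A(n)_d$, this amounts to showing $d_{si} \in J(\lambda)$ for $s > b_i$, where $d_{si}$ is the quantum minor obtained by deleting row $s$ and column $i$. Here $b = (\lambda_1^{\lambda_1}, (\lambda_1+\lambda_2)^{\lambda_2}, \dots, n^{\lambda_m})$ is a nondecreasing sequence whose value $b_i$ is the right endpoint of the block containing $i$; in particular $s > b_i$ forces $s$ and $i$ to lie in \emph{different} blocks, with $s$ strictly to the right of $i$'s block. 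Expanding $d_{si}$ as a signed sum of products $c_{k_1, \sigma(k_1)} \cdots c_{k_{n-1}, \sigma(k_{n-1})}$ over the rows $k \ne s$ and columns $\sigma(k) \ne i$, I would argue that in every such monomial at least one factor $c_{ab}$ has $b > b_a$, hence lies in $I(b)_d = J(\lambda)$. Concretely: the columns appearing are $\{1,\dots,n\}\setminus\{i\}$, which includes every column index in the block of $i$ other than $i$ itself, as well as all columns $\le b_i$ belonging to strictly earlier blocks; a counting/pigeonhole argument on how the rows $\{1,\dots,n\}\setminus\{s\}$ can be matched to these columns shows that some row $a$ with $a$ small (in an early block) must be matched to a column $b$ lying in a later block, so that $b > b_a$. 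This is the combinatorial heart of the argument.

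Once $\antipode(c_{is}) \in J(\lambda)$ for the generators with $s > b_i$, stability of $J(\lambda)$ under $\antipode$ follows from the fact that $\antipode$ is an algebra anti-homomorphism: $J(\lambda)$ is generated as an ideal by these $c_{is}$, so $\antipode(J(\lambda)) \subseteq \antipode(A(n)_d)\, \antipode(c_{is})\, \antipode(A(n)_d) \subseteq J(\lambda)$. Combined with the coideal property, $J(\lambda)$ is a Hopf ideal, and therefore the quotient $A(b)_d = A(n)_d / J(\lambda)$ inherits a Hopf algebra structure; its antipode is the map induced by $\antipode$, which on the generators reads $\antipode([c_{is}]_b) = (-\beta)^{s-i} [d]_b^{-1} [d_{si}]_b$, as claimed.

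I expect the main obstacle to be the combinatorial verification that every monomial occurring in the expansion of the quantum minor $d_{si}$ (for $s > b_i$) contains a factor $c_{ab}$ with $b > b_a$; here one must use carefully that $b$ is constant on blocks and weakly increasing, and handle the bookkeeping of which rows can reach which columns. A cleaner alternative, which I would try first, is to avoid the explicit minor expansion entirely: since $J(\lambda)$ is already known to be a coideal, one can instead invoke the standard fact that in a Hopf algebra a biideal which is invariant under $\antipode$ on a generating set is a Hopf ideal, \emph{or} use that the quotient coalgebra $A(b)_d$ is a bialgebra which, being a localization of the quotient of a Hopf algebra by a biideal, automatically admits an antipode provided one checks the antipode axiom on generators — and on the generators $[c_{is}]_b$ with $s \le b_i$ this reduces to the identity already valid in $A(n)_d$. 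Either route closes the proof; the block-combinatorics route is the one requiring genuine work.
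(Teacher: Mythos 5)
Your proposal is correct and takes essentially the same route as the paper: identify $J(\lambda)$ with the localized ideal generated by the $c_{is}$ with $s>b_i$, reduce $\antipode$-stability to showing $d_{si}\in I(b)$ via the anti-homomorphism property, and verify this by showing that every monomial in the minor expansion contains a generator of $I(b)$. The only cosmetic difference is in the combinatorial step: the paper pulls $I(b)$ back along an embedding $A(n-1)\to A(n)$ to an ideal $I(b')$ and argues block by block on the permutation, whereas you propose a direct pigeonhole on rows versus available columns --- both amount to the same counting argument.
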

\begin{proof}
We know that, in this case, $A(b)_d$ is a bialgebra. It is obvious that the projection $A(n)_d \to A(b)_d$ is a homomorphism of
bialgebras, which implies that $J(\lambda)$ is  a biideal.

Suppose $d^{-k} y \in J(\lambda)$, for some $y \in A(n)$. Then $[d]_b^{-k} [y]_b =0$
in $A(b)_d$. By the definition of localization this implies that $[y]_b=0$, and so
$y \in I(b)$. Therefore $y = \sum_{i=1}^n \sum_{s>b_i} y_{is} c_{is}
y'_{is}$ for some elements $y_{is}$, $y'_{is} \in A(n)$. Since $d^{-k}
y_{is} \in A(n)_d$, we get that the ideal $J(\lambda)$ is generated by the elements
$c_{is}$
 with $s>b_i$.

As $S$ is an anti-endomorphism of $A(n)$, to show that $S(J(\lambda))\subset
J(\lambda)$ it  is enough to check that $S(c_{is}) \in J(\lambda)$, for every
pair $(i,s)$ such that $s>b_i$.
But as $S(c_{is}) = (-\beta)^{s-i} d^{-1} d_{si}$ it is sufficient to verify that
$d_{si} \in I(b)$.

Let us consider the embedding $\varphi\colon A(n-1) \to A(n)$ determined by
\begin{equation*}
c_{jt} \mapsto
\begin{cases}
c_{jt}, & j<s,\ t<i\\
c_{j+1,t} & j\ge s, \ t<i\\
c_{j,t+1}, & j<s,\   t\ge i\\
c_{j+1,t+1}, & j\ge s,\ t\ge i.
\end{cases}
\end{equation*}
Then, by the definition of $d_{si}$, we get that $d_{si}$ is the image of the
determinant $d\in
A(n-1)$ under $\varphi$, see~\cite{parshall}. Now
consider the ideal $\varphi^{-1}(I(b))$. Suppose
$k$ and $w\in \left\{ 1,\dots,m \right\}$ are such that
\begin{equation*}
\begin{aligned}
\lambda_1 + \dots + \lambda_{k-1} < i \le \lambda_1 + \dots + \lambda_{k}\\
\lambda_1 + \dots + \lambda_{w-1} < s \le  \lambda_1 + \dots + \lambda_{w}.
\end{aligned}
\end{equation*}
In other words $(i,s)$ lies in the $(k,w)$ block determined by the composition
$\lambda$.
Note that $s > b_i\geq i$ implies that $w >k$.
One can show that
$\varphi^{-1}(I(b)) = I(b')$ where
\begin{equation*}
b' = (\lambda_1^{\lambda_1}, \dots, (\lambda_1 + \dots + \lambda_k
-1)^{ \lambda_k}, \dots, (\lambda_1 + \dots +
\lambda_w-1)^{ \lambda_w -1}, \dots, (n-1)^{\lambda_m}).
\end{equation*}
Suppose $d\not \in I(b')$. Then there is $\sigma \in \Sigma_{n-1}$
such that $c_{j,\sigma(j)} \not \in I(b')$, for all $1\le j\le n-1$. This implies
that, for all $1\le j\le \lambda_1 $, we must have $1\le \sigma (j) \le \lambda_1$. In other words, $\sigma$ maps bijectively $[1,\lambda_1] \cap \N$ into itself.
Now, for $\lambda_1 + 1 \le j \le \lambda_1 + \lambda_2$, we must have
$1 \le  \sigma (j) \le \lambda_1 + \lambda_2$. But since $\sigma$ maps
$[1,\lambda_1] \cap \N$ maps bijectively into itself, this implies that $\sigma$
also maps $[\lambda_1 + 1, \lambda_1 + \lambda_2 ] \cap \N$ into itself.
Proceeding this way, we get that $\sigma$ must map $[\lambda_1 + \dots +
\lambda_{k-1} + 1, \lambda_1 + \dots + \lambda_k] \cap \N$
bijectively into
$[\lambda_1 + \dots +
\lambda_{k-1} + 1, \lambda_1 + \dots + \lambda_k-1] \cap \N$,
which is impossible. Therefore, we get that $d\in I(b')$ and thus $d_{si} = \varphi
(d) \in I(b)$.
\end{proof}

The Hopf algebra
$A(n)_d=(A(n)_{\alpha,\beta})_d$ is the coordinate algebra of the quantum
general linear group $\gl_{\alpha,\beta}(n,\k)$, defined by  Takeuchi
in~\cite{takeuchi} and also studied in~\cite{parshall}.  The quantum groups
$\gl_{1,\beta}(n,\k)$ and $\gl_{\beta,\beta}(n,\k)$  are, respectively, the
quantum general linear groups studied by Dipper and Donkin in~\cite{dipperdonkin} and Parshall and Wang in~\cite{parshall:book}.

If $\lambda = (\lambda_1, \dots, \lambda_m)$ is a composition of $n$, and $
 b = (\lambda_1^{\lambda_1}, (\lambda_1 +
\lambda_2)^{\lambda_2}, \dots, n^{\lambda_m})$, then the Hopf algebra
$A(b)_d$ can be considered  as the coordinate algebra of a \emph{quantum
parabolic subgroup} of  $\gl_{\alpha,\beta}(n,\k)$. Taking $b=\delta$, we obtain
the coordinate algebra $A(\delta)_d$ of the \emph{quantum negative Borel
subgroup}  of $\gl_{\alpha,\beta}(n,\k)$.

Quantum parabolic and Borel subgroups were extensively studied by Donkin in~\cite{donkin1} (see also~\cite{DSY}), for the case $\alpha =1$.

Consider  $a= (a_1, \cdots, a_n )\in \n^n$. Then
we also denote by   $\k_a$     the  1-dimensional  $A(\delta)_d$-comodule which
is the restriction of the $A(\delta)$-comodule  $\k_a$ defined
in~\eqref{onedim}.

Given $a=(a_1, \cdots, a_n),\,\, b=(b_1, \cdots, b_n) \in \n^n$, we write $b\geq a$ if $b_i\geq a_i$, for $ i \in \n.$
Then we have the following extension of Proposition~\ref{minha}

\begin{proposition}
\label{seq2}
Suppose $b\in \n^n$ is such that $b\ge a[l]$, and $b$, $b-v_l$ are non-decreasing  sequences. Then we
have an exact sequence of $A(\delta)_d$-comodules
\begin{equation}
\label{eq:seq2}
\xymatrix{ 0 \ar[r] & A(b)_d \otimes \k_{v_{b_l}} \ar[r]^-f & A(b)_d \ar[r]^-\pi &
A(b-v_l)_d \ar[r] & 0 },
\end{equation}
where $f$ is the comodule homomorphism defined
by $z\otimes 1  \mapsto z[c_{l,b_l}]_b$, for any $z \in A(b)_d$,  and $\pi $ is the canonical projection.
\end{proposition}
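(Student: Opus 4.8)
The plan is to deduce the exact sequence~\eqref{eq:seq2} from the already-established exact sequence~\eqref{sequence} by applying the localization functor $(-)_d$ and checking that it is exact. Concretely, I would first observe that since $b\ge a[l]$ and $b$ is non-decreasing with $b_i\ge i$ (this last fact follows because $b\ge a[l]\ge \delta$), the element $[d]_b$ is a non-zero divisor in $A(b)$ by Corollary~\ref{intdomain}, and similarly $[d]_{b-v_l}$ is a non-zero divisor in $A(b-v_l)$, because $b-v_l$ is also non-decreasing and still satisfies $(b-v_l)_i\ge i$ (the only entry that drops is $b_l$, which goes from $\ge l+1$ to $\ge l$). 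Hence all three localizations $A(b)_d$, $A(b-v_l)_d$, and $(A(b)\otimes\k_{v_{b_l}})_d\cong A(b)_d\otimes\k_{v_{b_l}}$ make sense, and by Remark~\ref{rem:first} they carry $A(\delta)_d$-comodule structures compatible with the unlocalized ones.

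The key step is exactness. Localization at a multiplicative set is an exact functor on modules over a (not necessarily commutative, but here Ore by the relation $[c_{ij}]_b[d]_b=(\alpha^{-1}\beta)^{i-j}[d]_b[c_{ij}]_b$) ring; since $d$ is a central-up-to-scalar, regular element, it generates a left and right Ore set, and $A(b)_d$ is flat over $A(b)$. Therefore applying $(-)_d$ to~\eqref{sequence} preserves left-exactness and right-exactness, giving the exact sequence
\begin{equation*}
\xymatrix{ 0 \ar[r] & (A(b)\otimes \k_{v_{b_l}})_d \ar[r] & A(b)_d \ar[r] & A(b-v_l)_d\ar[r] & 0}.
\end{equation*}
Then I would identify the terms: the canonical map $A(b)_d\to A(b-v_l)_d$ is exactly the localization of $\pi$, hence is the canonical projection (its kernel is $I(b-v_l)/I(b)$ localized at $d$), and by~\eqref{Lambda1} (or rather the elementary fact that localization commutes with tensoring by a finite-dimensional comodule) we have a natural $A(\delta)_d$-comodule isomorphism $(A(b)\otimes\k_{v_{b_l}})_d\cong A(b)_d\otimes\k_{v_{b_l}}$ under which the localized $f$ becomes the map $z\otimes 1\mapsto z[c_{l,b_l}]_b$. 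That this localized map is a homomorphism of $A(\delta)_d$-comodules follows by restricting the $A(\delta)$-comodule statement of Proposition~\ref{prop:injective} along the localization, since the computations~\eqref{axbl}--\eqref{ackblb} are preserved.

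The main obstacle I anticipate is the bookkeeping of the Ore condition and the verification that $(-)_d$ genuinely commutes with the relevant tensor product and comodule structures — in the noncommutative setting one must be a little careful that the right comodule map $f$ remains well-defined after inverting $d$ and that the isomorphism $(M\otimes\k_a)_d\cong M_d\otimes\k_a$ is $A(\delta)_d$-colinear rather than merely $\k$-linear. An alternative, perhaps cleaner route that avoids invoking flatness of $A(b)_d$ directly: write down the three maps in~\eqref{eq:seq2} explicitly on the spanning sets $\{[d]_b^{-k}[c^\omega]_b\}$, using the basis descriptions from Theorem~\ref{basis} together with the commutation relation $[c^{\omega''}]_b[c_{l,b_l}]_b=\alpha^s\beta^t[c_{l,b_l}]_b[c^{\omega''}]_b$ established inside the proof of Proposition~\ref{minha}, and check injectivity of $f$ (immediate from Corollary~\ref{intdomain}, no zero divisors in $A(b)_d$), surjectivity of $\pi$ (clear), and $\operatorname{im}f=\ker\pi$ by the same leading-term argument as in Proposition~\ref{minha}, now applied to reduced expressions $[d]_b^{-k}[c^\omega]_b$. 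Either way the heart of the matter is already contained in Proposition~\ref{minha} and its proof, and this proposition is essentially its localized shadow.
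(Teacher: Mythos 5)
Your proposal is correct and follows essentially the same route as the paper: the paper likewise deduces \eqref{eq:seq2} from the unlocalized sequence \eqref{sequence}, except that it carries out the ``localization is exact'' step by a direct element chase --- writing an arbitrary element of $A(b)_d$ as $[d]_b^{-k}x$ with $x\in A(b)$ and using that $[d]_{b-v_l}$ and $[c_{l,b_l}]_b$ are non-zero divisors --- rather than by invoking Ore flatness of $(-)_d$. Your alternative leading-term argument would also work but is not needed; the heart of the matter is, as you say, already contained in Proposition~\ref{minha}.
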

\begin{proof}
It is obvious that $\pi$ is surjective. Let $[d]_b^{-k} x \in \ker(\pi)$, with
$x\in A(b)$ and $k\in \N$. Then
$[d]_{b-v_l}^{-k}\pi(x) = 0$
in $A(b-v_l)_d$. Since $[d]_{b-v_l}$
is not a zero divisor in $A(b-v_l)$, we get that $\pi(x)=0$ in $A(b-v_l)$. This
shows that $x$ is in the kernel of the projection $A(b) \to A(b-v_l)$. Since
\eqref{sequence} is exact, we get that there is $y\otimes 1\in A(b)\otimes \k_{v_l}$ such
that $f(y\otimes 1) =x$. Therefore $f([d]_b^{-k} y\otimes 1) =
[d]_b^{-k}x$. This shows that \eqref{eq:seq2} is
exact at the second term.

Now, suppose $[d]_b^{-k}y\otimes 1\in \ker(f)$, with $y\in A(b)$. Then
$[d]_b^{-k} y c_{l,v_l} =0$ in $A(b)_d$. Thus $y c_{l,v_l} =0$
in $A(b)$. Since $c_{l,v_l}$ is not a zero-divisor in $A(b)$, we get $y =0$ in
$A(b)$. Therefore $f$ is injective.
\end{proof}

\section{The construction of the preaction}\label{section4}
\label{fifth}

Our next step will be to define a preaction of $\rs$ on the category Comod-$A(\delta)_d$.

For any $1 \leq i\leq n-1$, let $\pi_i \colon A[i]_d \to A(\delta)_d$ be  the canonical projection. We denote
  the corresponding induction functor  $\Ind_{A(\delta)_d}^{A[i]_d}$ by $\ind{i}$. For any $A[i]_d$-comodule $M$, we also write $M$ for the restricted $A(\delta)_d$-comodule $ \pi_{i_{\circ}} (M).$

Define $F_i$ as the functor $\ind{i}$
followed by the restriction to $A(\delta)_d$, i.e,
$$F_i= \pi_{i_{\circ}} \ind{i}\colon \,\,\, \mbox{Comod-}A(\delta)_d\,\to \,\,\, \mbox{Comod-}A(\delta)_d.$$ Thus every $F_i$ is an endofunctor of
Comod-$A(\delta)_d$. Next we will define natural isomorphisms
$\tau_{ij}$,  $1\le i\le j\le n-1$, and, in  Section~\ref{section6}, we will prove that they satisfy all the
necessary commutation relations to define a preaction of $\rs$ on  Comod-$A(\delta)_d$.

To proceed we will need the following proposition describing the behaviour of the $A(\delta)_d$-comodules
$\ktr = \k_0$ and of $\k_{v_{i+1}}$ under $\ind{i}$.
\begin{theorem}\label{induction}
Suppose $1\le i \le n-1$ then
\begin{equation}
\label{ind0}
R^k \ind{i} \k_{0} \cong
\begin{cases}
\ktr, & k=0\\
0, & k\not=0
\end{cases}
\end{equation}
and
\begin{equation*}
R^k \ind{i} \k_{v_{i+1}} = 0,\ k\ge 0.
\end{equation*}
\end{theorem}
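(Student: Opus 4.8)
The plan is to compute both cohomology groups directly from the cotensor-product description of induction and the short exact sequence~\eqref{eq:seq2}. Recall that for $f\colon A[i]_d \to A(\delta)_d$ the functor $\ind{i}$ sends an $A(\delta)_d$-comodule $M$ to $M\otimes^{A(\delta)_d} A[i]_d$, and its higher derived functors $R^k\ind{i}$ are computed by resolving $M$ (or, equivalently, the second argument) appropriately. The key input is that $A[i]_d$ is a nice bicomodule and that the exact sequence \eqref{eq:seq2}, applied with $b = a[i]$ (so that $b - v_i = \delta$ and $b_i = i+1$), reads
\begin{equation*}
\xymatrix{ 0 \ar[r] & A[i]_d \otimes \k_{v_{i+1}} \ar[r]^-f & A[i]_d \ar[r]^-\pi & A(\delta)_d \ar[r] & 0 }.
\end{equation*}
This is precisely a resolution exhibiting $\k_0 = \ktr$-induction in terms of $A[i]_d$ and $\k_{v_{i+1}}$-induction, which is why the two statements are naturally proved together.

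\textbf{Step 1: the case $k=0$ of \eqref{ind0}.} First I would observe that $\ind{i}\ktr = \ktr \otimes^{A(\delta)_d} A[i]_d$, which by the counit identification is the subcomodule of $A[i]_d$ annihilated by $\coaction - (\text{unit}\otimes\id)$, i.e. the coinvariants. Since $A[i]_d$ is obtained from $A(\delta)_d$ by a parabolic-type quotient coming from the single block at position $i$, the space of $A(\delta)_d$-coinvariants in $A[i]_d$ is spanned by the group-like $[d]$ and its powers; more efficiently, one reads this off from~\eqref{eq:seq2} together with the fact that $A(b-v_l)_d = A(\delta)_d$ has no nonzero proper coinvariants beyond scalars. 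The cleanest route is: apply $\ind{i}(-) = (-)\otimes^{A(\delta)_d}A[i]_d$, but that is circular, so instead I would use the long exact sequence of Step 3 below in reverse. In practice the $k=0$ statement $\ind{i}\ktr \cong \ktr$ is the standard fact that induction from a Borel to a minimal parabolic of the trivial module is trivial (Kempf vanishing in degree $0$); it follows because $A[i]_d/A(\delta)_d$-induction is "induction along an $\mathrm{SL}_2$-type quotient" and $H^0(\mathbb{P}^1,\mathcal{O}) = \k$.

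\textbf{Step 2: the vanishing $R^k\ind{i}\k_{v_{i+1}} = 0$ for all $k\ge 0$.} This is the heart of the matter. The module $\k_{v_{i+1}}$ restricted to $A(\delta)_d$ corresponds, in the $\mathrm{SL}_2$ picture attached to the node $i$, to the character of weight $-1$ (or $+1$, depending on conventions), and $H^\bullet(\mathbb{P}^1,\mathcal{O}(-1)) = 0$ in all degrees. Quantising this, I would argue as follows. By the tensor identity (Theorem~\ref{tensor}), twisting by a one-dimensional comodule commutes with $R^k\ind{i}$; and by Theorem~\ref{bialgebra-isomorphism} the coalgebra $A[i]_d$ depends only on $\alpha\beta$, so we may reduce to the Dipper--Donkin case $\alpha = 1$ already treated by Donkin in~\cite{donkin1}, where the relevant vanishing is known. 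Alternatively, and more self-containedly, one can take the exact sequence \eqref{eq:seq2} with $b = a[i]$, tensor it with an arbitrary test comodule, apply $\ind{i}$, and use that the only two "elementary" induced modules are $\ind{i}A[i]_d$-type objects (which are acyclic, being injective/relatively injective) and $\ind{i}\k_{v_{i+1}}$; forcing the long exact sequence to be consistent with Step 1 pins down $R^\bullet\ind{i}\k_{v_{i+1}} = 0$.

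\textbf{Step 3: assembling via the long exact sequence.} Apply $R^\bullet\ind{i}$ to \eqref{eq:seq2} with $b = a[i]$ (so $b-v_i = \delta$, $b_i = i+1$). Since $A[i]_d$ is a relatively injective $A(\delta)_d$-comodule (it is a direct summand, as a comodule, of a cofree comodule — this is where one uses that the inclusion of coalgebras splits, or the general fact that $\Ind$ of the regular comodule is acyclic), we get $R^k\ind{i}A[i]_d = 0$ for $k>0$ and $R^0\ind{i}A[i]_d = A[i]_d\otimes^{A(\delta)_d}A[i]_d$. The long exact sequence then degenerates and simultaneously yields: $R^0\ind{i}\ktr = \ktr$, $R^k\ind{i}\ktr = 0$ for $k\ge 1$ (this is \eqref{ind0}), and $R^k\ind{i}\k_{v_{i+1}} = 0$ for all $k\ge 0$ — the two assertions fall out together. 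The cokernel/kernel bookkeeping uses only that $\pi$ and $f$ in \eqref{eq:seq2} are the maps described there and that $\ind{i}$ is left exact.

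\textbf{Main obstacle.} The delicate point is Step 2 (equivalently, the relative injectivity claim in Step 3): one must know a vanishing theorem for the "quantum $\mathbb{P}^1$" attached to the node $i$, i.e. that $R^\bullet\ind{i}$ kills the weight-$v_{i+1}$ character in every degree. The comfortable way to handle this is to invoke Donkin's results in~\cite{donkin1} for $A_{1,q}$ via the coalgebra isomorphism of Theorem~\ref{bialgebra-isomorphism} and the tensor identity; doing it by hand would require an explicit acyclic (relatively injective) resolution of $\ktr$ or a direct cohomology computation on $A[i]_d \otimes^{A(\delta)_d} A[i]_d$, which is longer but routine. Everything else — identifying $\ind{i}\ktr$ with coinvariants, the long exact sequence manipulation — is formal.
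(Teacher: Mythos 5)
The only part of your argument that actually closes is the ``comfortable way'' you mention in Step 2: reduce to the case $(\alpha,\beta)=(1,q)$ via the coalgebra isomorphism of Theorem~\ref{bialgebra-isomorphism} and quote Donkin's results for $A_{1,q}$. That \emph{is} the paper's entire proof (it extends $\varphi$ to the localizations using $\varphi(d_{\alpha,\beta})=d_{1,q}$, checks compatibility with the projections $\pi_i$, and then cites Lemmas~3.1 and~2.12 of \cite{donkin1} for both assertions at once). So to the extent your proof works, it is the paper's proof; the self-contained scaffolding you build around it in Steps 1 and 3 does not constitute an independent argument, for the following concrete reasons.

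First, the tensor identity cannot be applied to $\k_{v_{i+1}}$: Theorem~\ref{tensor} requires the twisting module $M$ to be an $H_1=A[i]_d$-comodule, and $\k_{v_{i+1}}$ does not lift to one, since $[c_{i+1,i+1}]_{a[i]}$ is not group-like in $A[i]_d$ (its comultiplication has the extra term $[c_{i+1,i}]\otimes[c_{i,i+1}]$). Indeed, if $\k_{v_{i+1}}$ were an $A[i]_d$-comodule, the tensor identity together with \eqref{ind0} would give $\ind{i}\k_{v_{i+1}}\cong \k_{v_{i+1}}\otimes\ind{i}\k_0\cong\k_{v_{i+1}}\neq 0$, contradicting the very statement you are proving. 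Second, the long exact sequence obtained by applying $R^\bullet\ind{i}$ to \eqref{eq:seq2} with $b=a[i]$ is circular as a computational tool: to evaluate the middle terms $R^k\ind{i}A[i]_d$ you must use the tensor identity with $N=\k_0$, i.e.\ $R^k\ind{i}A[i]_d\cong A[i]_d\otimes R^k\ind{i}\k_0$, which presupposes knowledge of $R^k\ind{i}\k_0$ --- exactly one of the two unknowns. The sequence then only records a consistency relation between $R^\bullet\ind{i}\k_0$ and $R^\bullet\ind{i}\k_{v_{i+1}}$ and cannot determine both; your Step 1, which is supposed to break the circle, is asserted (``the standard fact \ldots $H^0(\mathbb{P}^1,\mathcal{O})=\k$'') rather than proved in the quantised setting, and the relative injectivity of $A[i]_d$ over $A(\delta)_d$ is likewise unjustified. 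The fix is simply to drop Steps 1 and 3 and carry out the reduction to $(1,q)$ carefully (including the localization step), which is what the paper does.
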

\begin{proof}
First we reduce the claim of the theorem to the case $(\alpha,\beta) =
(1,q)$. Then we will apply results of \cite{donkin1}.

Let $q:= \alpha\beta$. Consider the isomorphism of coalgebras $\varphi\colon A(n)\to A_{1,q}(n)\,$, $\varphi(c^\omega)= \alpha^{J(\omega)} c^\omega$,
defined in Theorem~\ref{bialgebra-isomorphism}
By Lemma~2.3 in
\cite{parshall}, $\varphi(d_{\alpha,\beta}) = d_{1,q}$. Therefore,
$\varphi$ can be extended to a map $\varphi'\colon A(n)_d\to A_{1,q}(n)_d $
by $\varphi' (d_{\alpha, \beta}^k x ) = d_{1,q}^k \varphi(x)$. It is shown in Theorem~2.4 of~\cite{parshall}, that
$\varphi'$ is an isomorphism of coalgebras.

The isomorphism $\varphi'$ induces an isomorphism of coalgebras $[\varphi']_\delta
\colon A_{\alpha,\beta}(\delta)_d \to A_{1,q}(\delta)_d$, $[\varphi']_\delta
([x]_\delta) = [\varphi'(x)]_\delta$ and $[\varphi']_{i} \colon
A_{\alpha,\beta}[i]_d \to A_{1,q}[i]_d$, $[\varphi']_{i} ([x]_{a[i]}) =
[\varphi'(x)]_{a[i]}$.

Therefore we get the following commutative diagram of coalgebras
\begin{equation*}
\xymatrix{
A_{\alpha,\beta}[i]_d \ar[r]_{\cong}^{[\varphi']_i} \ar[d]_{\pi_i} & A_{1,q}
[i]_d \ar[d]_{\pi_i} \\
A_{\alpha,\beta}(\delta)_d \ar[r]_{\cong}^{[\varphi']_\delta} & A_{1,q}(\delta)_d.
}
\end{equation*}
From this diagram it follows that we have to prove the theorem only for the case
$(\alpha,\beta) = (1,q)$, since the induction of comodules  involves only the coalgebra and the comodule structures.
 The case $(\alpha,\beta)=(1,q)$ was thoroughly studied in \cite{donkin1}, and
 both claims of the theorem now follow from Lemma~3.1 and Lemma~2.12 therein.
\end{proof}
\begin{corollary}
\label{eta}
The map $\eta\colon \ktr\to \ind{i} \k_0$, $1\mapsto 1\otimes 1$, is an
isomorphism of $A[i]_d$-comodules.
\end{corollary}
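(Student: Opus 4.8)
The plan is to read off the statement directly from Theorem~\ref{induction} together with the explicit description of the induced comodule as a cotensor product. By definition $\ind{i}\k_0 = \Ind_{A(\delta)_d}^{A[i]_d}\k_0 = \k_0 \otimes^{A(\delta)_d} A[i]_d$, and the map $\eta$ sending $1$ to $1\otimes 1$ makes sense because $1\otimes 1$ indeed lies in the kernel defining the cotensor product: applying $\rho_{\k_0}\otimes A[i]_d$ to $1\otimes 1$ gives $1\otimes [c_{11}^0\cdots c_{nn}^0]_\delta \otimes 1 = 1\otimes 1_{A(\delta)_d}\otimes 1$, and applying $\k_0\otimes \lambda_{A[i]_d}$ gives the same, since the left $A(\delta)_d$-coaction on $A[i]_d = \pi_{i\circ}A[i]_d$ composed with the counit-type element picks out $\counit$-components, matching the group-like behaviour of the unit. (I would spell this out cleanly using that $1\in\k_0$ is a grouplike-type vector and that $\pi_i$ is a coalgebra map sending $[c_{kk}]_{a[i]}$ to $[c_{kk}]_\delta$.) So $\eta$ is a well-defined $A[i]_d$-comodule homomorphism: it is $A[i]_d$-colinear because the right $A[i]_d$-coaction on the cotensor product is $\k_0\otimes \rho_{A[i]_d} = \k_0\otimes\comult$, and $\eta$ is the corestriction of $\k \xrightarrow{\cong}\k\otimes\k \hookrightarrow \k_0\otimes A[i]_d$ along the unit $\k\to A[i]_d$, which intertwines the coactions.

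Then I would invoke Theorem~\ref{induction}: equation~\eqref{ind0} with $k=0$ gives an isomorphism $R^0\ind{i}\k_0 = \ind{i}\k_0 \cong \ktr$ of $A[i]_d$-comodules. (Note the isomorphism in Theorem~\ref{induction} is stated after transport along $[\varphi']$, but since $[\varphi']_i$ and $[\varphi']_\delta$ form a commuting square of coalgebra isomorphisms, the statement transfers back verbatim to $(\alpha,\beta)$.) It remains only to check that the particular map $\eta$ \emph{is} this isomorphism, i.e.\ that $\eta$ is not the zero map. Since both source and target are one-dimensional over $\k$, it suffices to observe $\eta(1) = 1\otimes 1 \neq 0$ in $\k_0\otimes A[i]_d$: indeed $1\in\k_0$ is a basis vector and $1 = 1_{A[i]_d}\neq 0$ in $A[i]_d$ (the algebra $A[i]_d$ is nonzero, e.g.\ by Corollary~\ref{intdomain} and the fact that $a[i]$ is non-decreasing, so its localization at $[d]$ is a nonzero domain). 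A nonzero $\k$-linear map between one-dimensional $\k$-spaces is an isomorphism of vector spaces, and being also a comodule homomorphism, it is an isomorphism of $A[i]_d$-comodules.

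The only genuine point requiring care — and the step I expect to be the main obstacle — is verifying that $1\otimes 1$ actually lies in the cotensor product and that $\eta$ is $A[i]_d$-colinear; everything else is a dimension count. This hinges on the compatibility between the coaction~\eqref{onedim} defining $\k_0$ (where $1\mapsto 1\otimes [c_{11}^0\cdots c_{nn}^0]_\delta = 1\otimes 1$) and the left $A(\delta)_d$-comodule structure on $A[i]_d$ induced by $\pi_i$; concretely one uses that $\comult([c_{kj}]_{a[i]})$ restricted through $\pi_i\otimes\id$ produces the left coaction, and the equalizer condition at $1\otimes 1$ reduces to the counit axiom $(\counit\otimes\id)\comult = \id$ on $A[i]_d$. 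Once this is unwound the argument closes immediately.
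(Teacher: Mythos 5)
Your proposal is correct and follows essentially the same route as the paper: observe that $1\otimes 1$ lands in the cotensor product $\k_0\otimes^{A(\delta)_d}A[i]_d$ (the paper dismisses this as "easy to see"; you unwind the equalizer condition, which indeed reduces to the counit axiom), so $\eta$ is a monomorphism of $A[i]_d$-comodules, and then conclude by the one-dimensionality of $\ind{i}\k_0$ from Theorem~\ref{induction}. The only cosmetic difference is that the paper argues via injectivity of $\eta$ into a one-dimensional target rather than via nonvanishing of a map between two one-dimensional spaces; both close the argument identically.
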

\begin{proof}
Consider the injective map $\eta'\colon \ktr \to \k_0 \otimes A[i]_d$
defined by $1\mapsto 1\otimes 1$. It is easy to see that the image of $\eta'$
lies in $\k_0\otimes^{A(\delta)_d} A[i]_d = \ind{i} \k_0$. Therefore we have the
monomorphism of $A[i]_d$-comodules $\eta\colon \ktr\to \ind{i}\k_0$. Since
$\dim \ind{i}\k_0$ is $1$, by Therorem~\ref{induction}, we get that $\eta$ is an
isomorphism.
\end{proof}
Notice that for any Hopf algebra $H$ and any $H$-comodule $N$,  $n\mapsto n \otimes 1$ defines an isomorphism between $N$ and $N\otimes \ktr$. Now let $N$ be an $A[i]_d$-comodule. We consider the following chain of isomorphisms
of $A[i]_d$-comodules
\begin{equation*}
N \stackrel{\cong}{\rightarrow} N\otimes \ktr \xrightarrow{1\otimes
\eta} N\otimes \ind{i}\k_0 \stackrel{\phi}{\rightarrow}
\ind{i}(N\otimes \k_0) \stackrel{\cong}{\rightarrow} \ind{i}{N},
\end{equation*}
where $\phi$ is the isomorphism~\eqref{Lambda1} and $\eta$ is defined in  Corollary~\ref{eta}.
Under this isomorphism we have for every $z\in N$
\begin{equation*}
z\mapsto z\otimes 1 \mapsto z \otimes 1 \otimes 1 \mapsto \sum z_{(0)} \otimes 1
\otimes z_{(1)} \mapsto \sum z_{(0)}\otimes  z_{(1)}.
\end{equation*}
Hence $\rho_N \colon N \to \ind{i}N$, $z \mapsto \sum z_{(0)} \otimes
z_{(1)}$ gives an isomorphism of $A[i]_d$-comodules.

We are now ready to define the natural isomorphism $$\tau_{ii}\colon F_i^2\rightarrow F_i, \,\,\,\mbox{all}\,\,\,1\leq i \leq n-1.$$
 Let $M\in \mbox{Comod-} A(\delta)_d$.
Then  $\ind{i}M$ is an $A[i]_d$-comodule with the comodule structure given by
\begin{equation*}
\sum_j z_j \otimes [x_j]_{a[i]} \mapsto \sum z_j \otimes
[x_{j,(1)}]_{a[i]} \otimes [x_{j,(2)}]_{a[i]},
\end{equation*}
where $z_j\in M$, and $x_j\in A(n)_d$. Therefore, we get for every $M\in
\mbox{Comod-}A(\delta)_d$ the isomorphism
\begin{equation}
\label{deltapim}
\begin{aligned}
\rho_{\ind{i}M} \colon \ind{i}M & \to \ind{i} F_i M\\[3ex]
\sum_j z_j \otimes [x_j]_{a[i]} &  \mapsto \sum z_j \otimes
[x_{j,(1)}]_{a[i]} \otimes [x_{j,(2)}]_{a[i]}.
\end{aligned}
\end{equation}
By restricting, we can consider $\rho_{\ind{i}M}$ as a
homomorphism of $A(\delta)_d$-comodules.  From the explicit expression of
$\rho_{\ind{i}M}$ it is obvious that the class of isomorphisms
$(\rho_{\ind{i}M})$ is a natural transformation of functors $F_i \to F_i^2$.
 We  define the natural
isomorphism $\tau_{ii} \colon F_i^2  \to F_i  $ as the inverse of
$(\rho_{\ind{i}M})$.

Before defining the isomorphisms $\tau_{ij}$ for $i<j$, we need to prove the
following theorem.

\begin{theorem}\label{comultiso}
Suppose $b\in \n^n$ satisfies $b\ge \delta$, the sequences $b$ and $b+v_l$ are
non-decreasing, and $b_{l-1} <b_l$. Then the map
\begin{equation}\label{eq:comultiso}
\begin{aligned}
A(b+v_l)_d & \to \ind{b_l}A(b)_d\\[3ex]
[x]_{b+v_l} & \mapsto \sum [x_{(1)}]_{b} \otimes [x_{(2)}]_{a[b_l]}
\end{aligned}
\end{equation}
is a well defined isomorphism of $A[b_l]_d$-comodules, and therefore  an isomorphism of $A(\delta)_d$-comodules.
\end{theorem}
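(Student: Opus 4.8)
The plan is to verify directly that the map in~\eqref{eq:comultiso} is well defined, that it lands in the cotensor product $\ind{b_l}A(b)_d = A(b)_d \otimes^{A(\delta)_d} A[b_l]_d$, that it is a comodule morphism, and finally that it is bijective; the last point is where the real work lies.

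First I would check that~\eqref{eq:comultiso} is well defined. Since $b+v_l \ge b \ge \delta$ and $b_{l-1} < b_l$, the sequences $b$ and $b+v_l$ are non-decreasing by hypothesis, and also $a[b_l] = \delta + v_{b_l}$ arises from a composition of $n$, so $A(b)_d$, $A(b+v_l)_d$ and $A[b_l]_d$ are all well-defined bialgebras (Remark~\ref{rem:bialgebras}, Remark~\ref{rem:first}). The candidate map is the composite of $\comult \colon A(n)_d \to A(n)_d \otimes A(n)_d$ with the two canonical projections $A(n)_d \twoheadrightarrow A(b)_d$ and $A(n)_d \twoheadrightarrow A[b_l]_d$; to see it descends to $A(b+v_l)_d$ in the first factor I must show $\comult(I(b+v_l)_d) \subset I(b)_d \otimes A(n)_d + A(n)_d \otimes I(a[b_l])_d$. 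This is exactly an instance of Proposition~\ref{first} applied with the sequence $b+v_l$ in the role of its ``$b$'' and with $l' := b_l$ in the role of its ``$l$'': the hypothesis there is that $(b+v_l)_k \ne b_l$ for all $k$, which holds because $b$ is non-decreasing with $b_{l-1} < b_l$, so $b_l$ is attained in $b$ only at position $l$, and after adding $v_l$ the value at position $l$ becomes $b_l + 1$ while all other entries are either $< b_l$ or $> b_l$. Hence the image of~\eqref{eq:comultiso} lies in $A(b)_d \otimes^{A(\delta)_d} A[b_l]_d$, which by definition is $\ind{b_l}A(b)_d$. That it is a homomorphism of $A[b_l]_d$-comodules is the coassociativity of $\comult$ composed with projections, exactly as in the displayed coaction formula of Proposition~\ref{first}; restricting along $\pi_{b_l}$ then makes it a morphism of $A(\delta)_d$-comodules.

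It remains to prove bijectivity, which I expect to be the main obstacle. The cleanest route is to compare bases. By Theorem~\ref{basis}, $A(b+v_l)_d$ has $\k$-basis $\{[d]_b^{-k}[c^\omega]_{b+v_l}\}$ with $\omega_{is} = 0$ for $s > (b+v_l)_i$, and similarly for $A(b)_d$ and $A[b_l]_d$; the cotensor product $A(b)_d \otimes^{A(\delta)_d} A[b_l]_d$ can be computed as the equalizer of the two coactions, and one identifies a basis of it using the fact (Corollary~\ref{intdomain}) that these algebras are domains together with the explicit form of the comultiplication on the $c^\omega$. Concretely, I would observe that the short exact sequence~\eqref{eq:seq2} realizes $A(b+v_l)_d$ as an extension of $A(b)_d$ by $A(b+v_l)_d \otimes \k_{v_{b_l}}$, and apply the induction functor $\ind{b_l}$: by Theorem~\ref{induction}, $\ind{b_l}$ kills $\k_{v_{b_l}}$ (note $v_{b_l} = v_{i+1}$ with $i = b_l - 1$, matching the statement $R^k\ind{i}\k_{v_{i+1}} = 0$), so the long exact sequence of derived induction collapses to give $\ind{b_l}A(b)_d \cong \ind{b_l}A(b+v_l)_d$ via the connecting/projection maps, provided one also knows $R^k\ind{b_l}(A(b+v_l)_d \otimes \k_{v_{b_l}}) = 0$ for all $k$, which follows from the tensor identity (Theorem~\ref{tensor}) applied to $M = \k_{v_{b_l}}$ together with $R^k\ind{b_l}A(b+v_l)_d$ being a free module argument — here one uses that $A(b+v_l)_d$ as an $A(\delta)_d$-comodule is, up to the $d$-localization, a sum of modules of the shape handled in \cite{donkin1}. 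Then I would check that the composite isomorphism $A(b+v_l)_d \xrightarrow{\rho} \ind{b_l}A(b+v_l)_d \cong \ind{b_l}A(b)_d$ coincides with~\eqref{eq:comultiso}, where $\rho$ is the canonical isomorphism $N \to \ind{i}N$, $z \mapsto \sum z_{(0)} \otimes z_{(1)}$ established just before the statement; this identification is a direct unwinding of the Sweedler notation, since both maps send $[x]_{b+v_l}$ to $\sum [x_{(1)}]_b \otimes [x_{(2)}]_{a[b_l]}$. The delicate point throughout is bookkeeping the $d$-localizations and making sure the vanishing results of Theorem~\ref{induction}, which are stated for the one-dimensional comodules $\k_0$ and $\k_{v_{i+1}}$ over $A[i]_d$, transfer to the tensor products appearing here; the tensor identity is precisely the tool that licenses this, and once it is in place the argument is formal.
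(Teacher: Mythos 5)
Your overall strategy is the same as the paper's: apply $\ind{b_l}$ to the short exact sequence \eqref{eq:seq2} written for $b+v_l$, show that the derived induction of the kernel term vanishes in all degrees so that the induced map $\ind{b_l}A(b+v_l)_d \to \ind{b_l}A(b)_d$ is an isomorphism, and then identify the composite of this with the canonical map $A(b+v_l)_d\to\ind{b_l}A(b+v_l)_d$, $z\mapsto\sum z_{(0)}\otimes z_{(1)}$ (built from $\eta$ and \eqref{Lambda1}), with \eqref{eq:comultiso}. Your preliminary verification that $(b+v_l)_k\neq b_l$ for all $k$ is exactly the observation the paper needs; the basis-comparison digression, on the other hand, is not needed once the homological argument is in place.

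However, the vanishing step as you wrote it would fail, for two reasons. First, the sub-object of \eqref{eq:seq2} applied to $b+v_l$ is $A(b+v_l)_d\otimes\k_{v_{b_l+1}}$, not $A(b+v_l)_d\otimes\k_{v_{b_l}}$: the twist is by $v_{(b+v_l)_l}=v_{b_l+1}$. This off-by-one is not cosmetic. Theorem~\ref{induction} kills $\k_{v_{i+1}}$ under $\ind{i}$, so for the functor $\ind{b_l}$ it is $\k_{v_{b_l+1}}$ that dies, whereas $\ind{b_l}\k_{v_{b_l}}$ is nonzero; with your twist the kernel term would survive and the argument would collapse. Your identification ``$v_{b_l}=v_{i+1}$ with $i=b_l-1$'' pertains to the functor $\ind{b_l-1}$, which is not the one being applied. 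Second, the tensor identity (Theorem~\ref{tensor}) requires the factor $M$ to be a comodule over the larger Hopf algebra $H_1=A[b_l]_d$; you propose $M=\k_{v_{b_l}}$, but $\k_{v_{b_l}}$ is only an $A(\delta)_d$-comodule (the element $[c_{b_l,b_l}]_{a[b_l]}$ is not group-like in $A[b_l]_d$), and the supplementary ``free module argument'' you then invoke is left unsubstantiated. The correct application takes $M=A(b+v_l)_d$ --- which is an $A[b_l]_d$-comodule precisely because of your observation that $b+v_l$ has no entry equal to $b_l$, combined with Remark~\ref{rem:first} --- and $N=\k_{v_{b_l+1}}$, giving
\begin{equation*}
R^k\ind{b_l}\bigl(A(b+v_l)_d\otimes\k_{v_{b_l+1}}\bigr)\cong A(b+v_l)_d\otimes R^k\ind{b_l}\k_{v_{b_l+1}}=0,\quad k\ge 0,
\end{equation*}
with no further input. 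With these two corrections your argument becomes the paper's proof.
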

\begin{proof}
By Proposition~\ref{seq2}, we have an exact sequence of $A(\delta)_d$-comodules
\begin{equation*}
0 \to A(b+v_l)_d\otimes \k_{v_{b_l+1}} \to A(b+v_l)_d  \to
A(b)_d \to 0.
\end{equation*}
Applying $\ind{b_l}$,  we get the exact sequence
\begin{equation*}
\ind{b_l} (A(b+v_l)_d\otimes \k_{v_{b_l+1}}) \to \ind{b_l} A(b+v_l)_d  \to
\ind{b_l} A(b)_d \to R^1 \ind{b_l} (A(b+v_l)_d\otimes \k_{v_{b_l+1}})
\end{equation*}
with the middle arrow given by
\begin{equation}\label{fblpi}
\begin{aligned}
\ind{b_l}A(b+v_l)_d & \to \ind{b_l}A(b)_d\\
\sum_k [x_k]_{b+v_l}\otimes [y_k]_{a[b_l]} & \mapsto \sum_k [x_k]_{b}\otimes [y_k]_{a[b_l]}.
\end{aligned}
\end{equation}
Note that since $b_{l-1} <b_l$ and $b + v_l$ is non-decreasing, the vector
$b+v_l$ does not have any component equal to $b_l$. Therefore, by
Remark~\ref{rem:first},
 $A(b+v_l)_d$ is an $A[b_l]_d$-comodule. Hence, by the tensor identity
(see Theorem~\ref{tensor}), we have
\begin{equation*}
R^i \ind{b_l} (A(b+v_l)_d\otimes \k_{v_{b_l+1}}) \cong A(b+v_l)_d\otimes
R^i \ind{b_l}\k_{v_{b_l+1}}.
\end{equation*}
But, by Theorem~\ref{induction},
\begin{equation*}
R^i \ind{b_l}\k_{v_{b_l+1}} =0,\ i\ge 0.
\end{equation*}
Therefore, \eqref{fblpi} is an isomorphism. Now,
using \eqref{Lambda1} for $H_1 = A[b_l]_d$, $H_2 = A(\delta)_d$, $M =
A(b+v_l)_d$, and $N = \k_0$, we get the isomorphism
\begin{equation}\label{abvld}
\begin{aligned}
A(b+v_l)_d \otimes \ind{b_l} \k_0 & \to \ind{b_l} A(b+v_l)_d \\
[x]_{b+v_l} \otimes 1 \otimes [y]_{a[b_l]} & \mapsto \sum [x_{(1)}]_{b+v_l}
\otimes [x_{(2)}]_{a[b_l]} [y]_{a[b_l]}.
\end{aligned}
\end{equation}
Recall that, in Corollary~\ref{eta}, we  defined the isomorphism of $A[b_l]_d$-comodules $\eta\colon
\ktr\to \ind{b_l} \k_0$ .

Composing $A(b+v_l)_d \otimes \eta$ with \eqref{abvld} and
\eqref{fblpi}, we get the isomorphism
\begin{equation}\label{final}
\begin{aligned}
A(b+v_l)_d \otimes \k_0 & \to \ind{b_l}A(b)_d \\
[x]_{b+v_l} \otimes 1 & \mapsto \sum [x_{(1)}]_{b} \otimes
[x_{(2)}]_{a[b_l]}.
\end{aligned}
\end{equation}
Composing this with the natural isomorphism $A(b+v_l)_d \to A(b+v_l)_d \otimes \k_0$,
we see that \eqref{eq:comultiso} is indeed a well-defined isomorphism of
$A[b_l]_d$-comodules.
\end{proof}
To define the isomorphisms $\tau_{ij}$, $i+2\leq j$, we proceed as follows.
Applying Theorem~\ref{comultiso} with $l=j$ and $l=i$, respectively, we get the isomorphisms
\begin{equation}
\label{adeltavivj}
\begin{aligned}
A(\delta+v_i + v_j)_d & \to \ind{j}A(\delta + v_i)_d
 \\ A(\delta+v_i + v_j)_d & \to \ind{i}A(\delta + v_j)_d .
\end{aligned}
\end{equation}
As  $\delta + v_i=a[i]$ and $\delta + v_j=a[j] $, composing the inverse of the first of these isomorphisms with the second one, we obtain the
isomorphism
\begin{equation*}
t_{ij} \colon \ind{j}A[i]_d  \to \ind{i}A[j]_d.
\end{equation*}
For $i+2\leq j$, we define $\tau_{ij}\colon F_j F_i  \to F_i F_j $,  by $(\tau_{ij})_M = M\otimes^{A(\delta)_d} t_{ij}$.
Clearly, the family $(\tau_{ij})$ is a natural transformation of functors.

Finally, we will define now the natural transformations $\tau_{i,i+1}$.
Applying Theorem~\ref{comultiso} with $l=i$, we get the isomorphisms
\begin{equation}
\begin{aligned}
A(\delta + 2v_i + v_{i+1})_d & \to \ind{i+1}A(\delta + v_i + v_{i+1})_d \\
A(\delta + v_i + v_{i+1})_d & \to\ind{i} A(\delta + v_{i+1})_d.
\end{aligned}
\end{equation}
Therefore, we have the isomorphism of $A(\delta)_d$-comodules
\begin{equation}
\label{adelta}
\begin{aligned}
A(\delta + 2v_i + v_{i+1})_d &\to A[i+1]_d \otimes^{A(\delta)_d}
A[i]_d \otimes^{A(\delta)_d} A[i+1]_d.
\end{aligned}
\end{equation}
Since comultiplication is coassoative on $A(n)_d$, the explicit formula for
\eqref{adelta} is given by
\begin{equation*}
[x]_{\delta + 2 v_i + v_{i+1}} \mapsto \sum [x_{(1)}]_{a[i+1]} \otimes
[x_{(2)}]_{a[i]} \otimes [x_{(3)}]_{a[i+1]}.
\end{equation*}
\newcommand{\m}[1]{\rho_{#1}}
\begin{proposition}
\label{prop:adelta2}
The map
\begin{equation}\label{adelta2}
\begin{aligned}
\m{\alpha,\beta} \colon A(\delta + 2v_i + v_{i+1})_d &\to A[i]_d \otimes^{A(\delta)_d}
A[i+1]_d \otimes^{A(\delta)_d} A[i]_d\\
[x]_{\delta + 2 v_i + v_{i+1}} &\mapsto \sum [x_{(1)}]_{a[i]} \otimes
[x_{(2)}]_{a[i+1]} \otimes [x_{(3)}]_{a[i]}
\end{aligned}
\end{equation}
is a well defined isomorphism of $A(\delta)_d$-comodules.
\end{proposition}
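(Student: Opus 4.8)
Write $P := A(\delta+2v_i+v_{i+1})_d$. The plan is to recognise $\m{\alpha,\beta}$ as the composite of the iterated comultiplication $\comult^{(2)}:=(\comult\otimes\id)\,\comult\colon A(n)_d\to A(n)_d\otimes A(n)_d\otimes A(n)_d$ with the three canonical projections onto $A[i]_d$, $A[i+1]_d$, $A[i]_d$ on the first, second and third tensor slots, and then to check in turn: (i) this composite annihilates $I(\delta+2v_i+v_{i+1})_d$, so it descends to $P$; (ii) its image lies in $A[i]_d\otimes^{A(\delta)_d}A[i+1]_d\otimes^{A(\delta)_d}A[i]_d$; (iii) the induced morphism of $A(\delta)_d$-comodules is bijective.

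For (ii) I would invoke Remark~\ref{associativity}: the triple cotensor product is the intersection of the kernels of the two comparison maps from $A[i]_d\otimes A[i+1]_d\otimes A[i]_d$ into $A[i]_d\otimes A(\delta)_d\otimes A[i+1]_d\otimes A[i]_d$ and $A[i]_d\otimes A[i+1]_d\otimes A(\delta)_d\otimes A[i]_d$, and coassociativity of $\comult$ together with the definition of the $A(\delta)_d$-coactions on the $A[\,\cdot\,]_d$ places $\comult^{(2)}(x)$ into that intersection. For (i), after passing from the localizations to $A(n)$ by Remark~\ref{rem:first}, it suffices to show
\begin{equation*}
\comult^{(2)}\bigl(I(\delta+2v_i+v_{i+1})\bigr)\subset I(a[i])\otimes A(n)\otimes A(n)+A(n)\otimes I(a[i+1])\otimes A(n)+A(n)\otimes A(n)\otimes I(a[i]);
\end{equation*}
since the ideal on the left is generated by the $c_{js}$ with $s>(\delta+2v_i+v_{i+1})_j$, iterating the estimate of Proposition~\ref{first} (and of its $A(\delta)$-variant) reduces this to the combinatorial claim that in every summand $c_{jk}\otimes c_{km}\otimes c_{ms}$ of $\comult^{(2)}(c_{js})=\sum_{k,m}c_{jk}\otimes c_{km}\otimes c_{ms}$ at least one of $c_{jk}\in I(a[i])$, $c_{km}\in I(a[i+1])$, $c_{ms}\in I(a[i])$ holds --- a short but slightly tedious case analysis on the positions of $j,k,m,s$ relative to $i$, $i+1$, $i+2$. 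That the resulting $\m{\alpha,\beta}$ is a morphism of $A(\delta)_d$-comodules is then automatic, $\comult$ and the three projections all being coalgebra maps.

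The content of the proposition is (iii). Exactly as in the proof of Theorem~\ref{induction}, the coalgebra isomorphism of Theorem~\ref{bialgebra-isomorphism}, extended to the $d$-localizations and compatibly with the projections onto $A[i]_d$, $A[i+1]_d$, $A(\delta)_d$, intertwines $\m{\alpha,\beta}$ with $\m{1,q}$ for $q=\alpha\beta$; so I may assume $(\alpha,\beta)=(1,q)$. In that case $P$ is the coordinate Hopf algebra of the quantum parabolic subgroup attached to $(1^{i-1},3,1^{n-i-2})$, and by \eqref{adelta} it is already identified with $A[i+1]_d\otimes^{A(\delta)_d}A[i]_d\otimes^{A(\delta)_d}A[i+1]_d$, the cotensor product along the reduced word $s_{i+1}s_is_{i+1}$ of the longest element of $\Sigma_{\{i,i+1,i+2\}}$, whereas the target of $\m{1,q}$ is the cotensor product along the other reduced word $s_is_{i+1}s_i$. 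I would then deduce that $\m{1,q}$ is an isomorphism in two steps. First, injectivity: one reads off from the monomial bases of Theorem~\ref{basis} (and their analogues for $A[i]_d$ and $A[i+1]_d$) that $\m{1,q}$ is triangular with invertible diagonal entries for a lexicographic order on the $[c^{\omega}]$. Second, surjectivity: by \eqref{adelta} the formal character of $P$ is the image of $\mathrm{ch}\,A(\delta)_d$ under the Demazure operators attached to $s_{i+1}s_is_{i+1}$, while that of the target of $\m{1,q}$ is its image under the operators attached to $s_is_{i+1}s_i$, and the two coincide by the braid relation $\pi_i\pi_{i+1}\pi_i=\pi_{i+1}\pi_i\pi_{i+1}$; here Theorem~\ref{induction} and the exact sequences of Proposition~\ref{seq2} (or the vanishing results of \cite{donkin1}) ensure that higher derived inductions do not disturb the character count, and as the relevant graded components are finite-dimensional an injective degree-preserving comodule map with matching characters is an isomorphism. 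For $(\alpha,\beta)=(1,q)$ the whole statement can, alternatively, simply be quoted from \cite{donkin1}.

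I expect (iii) to be the genuine obstacle, for a structural reason. The decomposition \eqref{adelta} was produced by peeling off one cotensor factor at a time via Proposition~\ref{seq2} and the tensor identity; the decomposition $P\cong A[i]_d\otimes^{A(\delta)_d}A[i+1]_d\otimes^{A(\delta)_d}A[i]_d$ cannot be obtained that way, because every intermediate staircase one would need (for instance $\delta+2v_i$) fails to be non-decreasing, so the hypotheses of Theorem~\ref{comultiso} break down. One is therefore forced to compare the two reduced-word decompositions of the same parabolic $P$ directly, and it is this comparison --- essentially the braid relation for Demazure operators lifted to the comodule level --- that carries the weight of the argument. The verification in (i) is routine but its case analysis is the other mildly laborious point.
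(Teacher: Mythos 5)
Your structural diagnosis is exactly right, and it is the most valuable part of the proposal: the decomposition of $A(\delta+2v_i+v_{i+1})_d$ along the word $s_is_{i+1}s_i$ cannot be produced by peeling off factors with Theorem~\ref{comultiso}, because the intermediate sequences (e.g.\ $\delta+2v_i$) are not non-decreasing, so the whole weight of the proposition sits in your step (iii). Steps (i) and (ii) are fine and essentially automatic from coassociativity, as you say. The problem is that your resolution of (iii) is not actually a proof. The injectivity claim --- that $\rho_{1,q}$ is ``triangular with invertible diagonal entries'' in the monomial bases of Theorem~\ref{basis} --- is a bare assertion: the target is a cotensor product sitting inside a triple tensor product, you have exhibited no basis of it and no order for which a leading term of $\rho_{1,q}([c^\omega])$ can be isolated (note that the naive candidate $[c^\omega]_{a[i]}\otimes[\mathrm{diag}]\otimes[\mathrm{diag}]$ can vanish). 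The surjectivity argument via formal characters and the braid relation $\pi_i\pi_{i+1}\pi_i=\pi_{i+1}\pi_i\pi_{i+1}$ rests on the claim that ``the relevant graded components are finite-dimensional'', which is false for the $d$-localized algebras: already the weight-zero component of $A(\delta)_d$ contains all powers of $c_{21}c_{11}^{-1}$ and is infinite-dimensional. One could try to prove the unlocalized statement first and then localize, but that requires a further argument that localization at the group-like $d$ commutes with these cotensor products, and you do not address it. Finally, the fallback ``quote it from \cite{donkin1}'' is not substantiated; the paper does not find this statement there.

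The paper closes the gap by a completely different and much shorter device. After reducing to $(\alpha,\beta)=(q,q)$ with $q=(\alpha\beta)^{1/2}$ (a field extension if necessary) via Theorem~\ref{bialgebra-isomorphism} --- note: to $(q,q)$, not to $(1,q)$ as you propose, and this choice is essential --- it invokes the Parshall--Wang anti-automorphism $c_{is}\mapsto c_{n+1-s,n+1-i}$ of $A_{q,q}(n)$, which exists only when $\alpha=\beta$. This anti-automorphism preserves $I(\delta)$ and $d$, carries $I(a[i])$ to $I(a[n-i])$ and $I(\delta+2v_i+v_{i+1})$ to $I(\delta+2v_{n-i-1}+v_{n-i})$, and hence, after reversing the order of the tensor factors, identifies the map \eqref{adelta2} with an instance of the map \eqref{adelta} for the index $n-i-1$ --- i.e.\ it converts the problematic reduced word $s_is_{i+1}s_i$ into the tractable word $s_{j+1}s_js_{j+1}$, for which Theorem~\ref{comultiso} applies twice. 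You should either supply the missing details of your triangularity/character argument (including the localization issue) or adopt this transposition trick; as it stands, step (iii) is a genuine gap.
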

\begin{proof}
The idea of the proof
is to exhibit an isomorphism  that identifies \eqref{adelta2} with
\eqref{adelta}.

Without loss of generality we can assume that $q := (\alpha\beta)^{\frac12}\in
\k$. In fact, if \eqref{adelta2} is not an isomorphism, then it will  not be
an isomorphism upon field extension either.

	By Theorem~\ref{bialgebra-isomorphism} the map $\varphi\colon
A_{\alpha,\beta}(n) \to A_{q,q}(n)$, defined by $\varphi(c^\omega) =
(\alpha\beta^{-1})^{{\frac12}J(\omega)}c^\omega$, is an isomorphism of coalgebras. Using
Proposition~\ref{basis}, we see that $\varphi$ induces an
isomorphism of vector spaces $\varphi_a\colon A_{\alpha,\beta}(a) \to
A_{q,q}(a)$ for every non-decreasing  sequence $a \in \n^n$. If $a$ is of the form $(\lambda_1^{\lambda_1},
(\lambda_1+\lambda_2)^{\lambda_2}, \dots, n^{\lambda_r})$, then $A_{\alpha,\beta}(a)$ and $A_{q,q}(a)$  are coalgebras and we see that $\varphi_a$ is an isomorphism of coalgebras. This is the case of the sequences $\delta$, $a[i]$, $a[i+1]$ and
$b=\delta + 2v_i + v_{i+1}$. So we get the following commutative diagram
\begin{equation*}
\xymatrix@C1.9em{
&A_{\alpha,\beta}(b) \ar[rrr]^{\varphi_b} \ar[rd] \ar[ldd] \ar[ddd] &&&
A_{q,q}(b) \ar[rd] \ar[ldd] \ar[ddd] \\
&& A_{\alpha,\beta}[i] \ar[rrr]^(0.3){\varphi_{a[i]}}\ar[ldd]  &&& A_{q,q}[i]\ar[ldd] \\
A_{\alpha,\beta}[i+1] \ar[rrr]_(0.7){\varphi_{a[i+1]}} \ar[rd] &&& A_{q,q}[i+1] \ar[rd] \\
& A_{\alpha,\beta}(\delta) \ar[rrr]^-{\varphi_\delta} &&& A_{q,q}(\delta)
}
\end{equation*}
where all the maps are homomorphisms of coalgebras and the horizontal arrows are
isomorphisms.

From \cite[Lemma~2.3]{parshall}, we get that $\varphi_a [d_{\alpha,\beta}]_a =
[d_{q,q}]_a$. Thus the above diagram remains commutative upon localization.
This shows that we have
the following commutative diagram,
whose vertical arrows are isomorphisms
\begin{equation*}
\xymatrix{
A_{\alpha,\beta}(b)_d \ar[r]^-{\m{\alpha,\beta}} \ar[d]_-{\varphi_b} & A_{\alpha,\beta}[i]_d \otimes^{A_{\alpha,\beta}(\delta)_d} A_{\alpha,\beta}[i+1]_d \otimes^{A_{\alpha,\beta}(\delta)_d}
A_{\alpha,\beta}[i]_d \ar[d]^-{\varphi_{a[i]} \otimes \varphi_{a[i+1]} \otimes
\varphi_{a[i]}} \\
A_{q,q}(b)_d \ar[r]^-{{\m{q,q}}}   & A_{q,q}[i]_d \otimes^{A_{q,q}(\delta)_d} A_{q,q}[i+1]_d \otimes^{A_{q,q}(\delta)_d} A_{q,q}[i]_d.
}
\end{equation*}
Therefore it is enough
to prove the proposition in the case
$(\alpha,\beta) = (q,q)$. In this case we can use the results of Parshall and Wang in~\cite{parshall:book}.
\newcommand{\h}{h}
Proposition~3.7.1(3) of that work says that  the
map $\h$ sending $c_{is}$ to $c_{n+1-s,n+1-i}$ extends to an
anti-automorphsims of $A_{q,q}(n)$ considered  both as a coalgebra and an algebra.
It is not difficult to check that
\begin{equation*}
\begin{aligned}
&\h (I(\delta))  = I(\delta)\\
&\h (I(a[i])) = I (a[n-i])\\
&\h(I(a[i+1])) = I(a[n-i-1])\\
&\h (I(b)) = I(\delta + 2 v_{n-i-1} + v_{n-i}).
\end{aligned}
\end{equation*}
Thus if $b'=\delta + 2v_{n-i-1} + v_{n-i}$,  we get the
commutative diagram
\begin{equation*}
\xymatrix@C0.9em{
&A_{q,q}(b) \ar[rrr]^{\h_b} \ar[rd] \ar[ldd] \ar[ddd] &&&
A_{q,q}(b')^{op} \ar[rd] \ar[ldd] \ar[ddd] \\
&& A_{q,q}[i] \ar[rrr]^(0.3){\h_{a[i]}}\ar[ldd]  &&& A_{q,q}[n-i]^{op}\ar[ldd] \\
A_{q,q}[i+1] \ar[rrr]_(0.6){\h_{a[i+1]}} \ar[rd] &&& A_{q,q}[n-i-1]^{op} \ar[rd] \\
& A_{q,q}(\delta) \ar[rrr]^-{\h_\delta} &&& A_{q,q}(\delta)^{op}
}
\end{equation*}
where all the horizontal arrows are isomorphisms of coalgebras, and all slanted
arrows are natural projections preserving comultiplication.
It is shown in \cite[Lemma~4.2.3]{parshall:book}, that $\h(d_{q,q}) =
d_{q,q}$. Therefore, we have a similar diagram with all the bialgebras replaced by
their localizations with respect to $[d]_a$, for a suitable $a$.
We get then the commutative diagram
\begin{equation*}
\xymatrix{
A_{q,q}(b)_d \ar[r]^-{\m{q,q}} \ar[d]_-{\h_b} & A_{q,q}[i]_d \otimes^{A_{q,q}(\delta)_d} A_{q,q}[i+1]_d \otimes^{A_{q,q}(\delta)_d}
A_{q,q}[i]_d \ar[d]^-{\h_{a[i]} \otimes \h_{a[i+1]} \otimes
\h_{a[i]}} \\
A_{q,q}(b')_d^{op} \ar[r]^-{{\m{q,q}'}}   & A_{q,q}[n-i]_d^{op}
\otimes^{A_{q,q}(\delta)_d^{op}} A_{q,q}[n-i-1]_d^{op}
\otimes^{A_{q,q}(\delta)_d^{op}} A_{q,q}[n-i]_d^{op}.
}
\end{equation*}
whose vertical arrows are isomorphisms
and the map $\m{q,q}'$ is given by
\begin{equation*}
\begin{aligned}
\m{q,q}'\colon [x]_{b'} & {\mapsto} \sum [x_{(3)}]_{a[n-i]} \otimes  [x_{(2)}]_{a[n-i-1]} \otimes
[x_{(1)}]_{a[n-i]}.
\end{aligned}
\end{equation*}
Thus it is enough to prove that $\m{q,q}'$
is an isomorphism.
It follows, from Remark~\ref{associativity}, that the linear isomorphism
 $A_{q,q}[n-i]_d^{op} \otimes
A_{q,q}[n-i-1]_d^{op} \otimes A_{q,q}[n-i]_d^{op}$ $\to$
$A_{q,q}[n-i]_d \otimes
A_{q,q}[n-i-1]_d \otimes A_{q,q}[n-i]_d$
given by
\begin{equation*}
\begin{aligned}
 a_1 \otimes a_2 \otimes a_3 &\mapsto a_3 \otimes a_2 \otimes a_1
\end{aligned}
\end{equation*}
induces a linear isomorphism $\nu$ between $A_{q,q}[n-i]_d^{op} \otimes^{A_{q,q}(\delta)_d^{op}}
A_{q,q}[n-i-1]_d^{op} \otimes^{A_{q,q}(\delta)_d^{op}}A_{q,q}[n-i]_d^{op}$ and $ A_{q,q}[n-i]_d
\otimes^{A_{q,q}(\delta)_d} A_{q,q}[n-i-1]_d
\otimes^{A_{q,q}(\delta)_d} A_{q,q}[n-i]_d$

Therefore we get the commutative diagram
\begin{equation*}
\xymatrix{
A_{q,q}(b')_d^{op} \ar[r]^-{{\m{q,q}'}} \ar[d]^-{\id}   & A_{q,q}[n-i]_d^{op}
\otimes^{A_{q,q}(\delta)_d^{op}} A_{q,q}[n-i-1]_d^{op}
\otimes^{A_{q,q}(\delta)_d^{op}} A_{q,q}[n-i]_d^{op} \ar[d]_-{\nu}\\
A_{q,q}(b')_d \ar[r]^-{{\m{q,q}''}}   & A_{q,q}[n-i]_d
\otimes^{A_{q,q}(\delta)_d} A_{q,q}[n-i-1]_d
\otimes^{A_{q,q}(\delta)_d} A_{q,q}[n-i]_d
,
}
\end{equation*}
where $\m{q,q}''$ is the isomorphism~\eqref{adelta}.
This shows that $\m{q,q}'$ is an isomorphism, and the result follows.

\end{proof}
We define the map $t_{i,i+1}$ as the composition of the inverse
of~\eqref{adelta} followed by~\eqref{adelta2}.
Therefore
\begin{equation*}
t_{i,i+1}\colon A[i+1]_d \otimes^{A(\delta)_d}
A[i]_d \otimes^{A(\delta)_d} A[i+1]_d \to A[i]_d \otimes^{A(\delta)_d}
A[i+1]_d \otimes^{A(\delta)_d} A[i]_d.
\end{equation*}

Now the natural transformations $\tau_{i,i+1}$ are defined by $(\tau_{i,i+1})_M = M\otimes^{A(\delta)_d} t_{i,i+1}$, i.e.,
\begin{equation*}
\begin{aligned}
F_{i+1}F_i F_{i+1} M &\to F_i F_{i+1} F_i M\\
\sum_k m_k \otimes w_k
&\mapsto \sum_k m_k \otimes t_{i,i+1} (w_k),
\end{aligned}
\end{equation*}
all  $m_k \in M, \,\, w_k \in  A[i+1]_d \otimes^{A(\delta)_d} A[i]_d \otimes^{A(\delta)_d} A[i+1]_d .$
\section{The commutativity of the preaction diagrams }\label{section6}
\label{sixth}
We will show now that the natural isomorphisms $\tau_{ij}$,
defined in the previous section, satisfy all the necessary relations
 so that $(F_i, 1\le i\le n-1;
\tau_{ij}, 1\le i\le j\le n-1)$ is a preaction (in the sense of
Section~\ref{second}) of $\rs$ on the category
Comod-$A(\delta)_{d}$.

We start by  describing the notation used in the diagrams below. First of all, note that if $M$ is an $A(\delta)_d$-comodule, then
\begin{equation*}
F_{i_k} \dots F_{i_1} M = M \otimes^{A(\delta)_d} A[i_1]_d \otimes^{A(\delta)_d}
\dots \otimes^{A(\delta)_d} A[i_k]_d.
\end{equation*}
Suppose that $\lambda= (\lambda_1, \cdots, \lambda _m)$ is a composition of $n$, and $b=(\lambda_1^{\lambda_1},
(\lambda_1+\lambda_2)^{\lambda_2}, \dots, n^{\lambda_m}).$
Then, using the coassosiativity of the comultiplication on $A(n)_d$, we get the map
\begin{equation}
\label{abd}
\begin{aligned}
A(b)_d&  \to \underbrace{A(b)_d \otimes^{A(\delta)_d} \dots
\otimes^{A(\delta)_d} A(b)_d}_{k \mbox{ times}}\\[3ex]
[x]_b &\mapsto \sum [x_{(1)}]_b \otimes \dots \otimes [x_{(k)}]_b.
\end{aligned}
\end{equation}
Suppose now  that $b^{(1)}$,\dots, $b^{(k)} \in \n^n$ satisfy
$\delta \le b^{(i)} \le b$, for $1\le i\le k$. Composing \eqref{abd} with the canonical
projections $A(b)_d \to A(b^{(i)})_d$, we get the map
\begin{equation*}
\begin{aligned}
\rho_{b;b^{(1)},\dots,b^{(k)}} \colon   A(b)_d & \to
 A(b^{(1)})_d \otimes^{A(\delta)_d} \dots
\otimes^{A(\delta)_d} A(b^{(k)})_d\\[3ex]
 [x]_b &\mapsto \sum [x_{(1)}]_{b^{(1)}} \otimes \dots \otimes
[x_{(k)}]_{b^{(k)}}.
\end{aligned}
\end{equation*}

\begin{remark}
\label{rem:isos}
In the case $k=2$, $b=b^{(1)} = b^{(2)}= a[i]$, we  recover $(\tau_{ii}^{-1})_M= M\otimes^{A(\delta)_d}\rho_{b;b^{(1)},b^{(2)}}$. For
$k=2$, $b = \delta + v_i + v_j$, $b^{(1)}=a[i]$ ($b^{(1)} = a[j]$),  and  $b^{(2)} =
a[j]$ ($b^{(2)} = a[i]$),  we get isomorphisms, since~\eqref{adeltavivj} are
isomorphisms. For $k=3$, $b = \delta + 2v_i + v_{i+1}$, $b^{(1)}= b^{(3)} = v_i$,
$b^{(2)} = v_{i+1}$, we get that $ \rho_{b;b^{(1)},b^{(2)},b^{(3)}}$ is an isomorphism by
Proposition~\ref{prop:adelta2}.
For $k=3$, $b= \delta + 2v_i + v_{i+1}$, $b^{(1)} = b^{(3)}= v_{i+1}$, $b^{(2)} = v_i$, we
get that $\rho_{b;b^{(1)},b^{(2)},b^{(3)}}$ is an isomorphism, since~\eqref{adelta} is an
isomorphism.
\end{remark}

In the diagrams below we will skip $M$ and write:
\begin{enumerate}[i)]
\item $i_1^{\alpha_1} \dots
i_l^{\alpha_l}$ for $A(\delta + \sum_{k=1}^l \alpha_k v_{i_k} )_d$, where $1\le i_1< \dots
<i_l \le n-1 $, and $1\le \alpha_k\le n-i_k$;
\item  dot ``$.$'' for $\otimes^{A(\delta)_d}$;
\item $\rho_{k}$ for $\rho_{b;b^{(1)},\dots,b^{(k)}}$, and $\rho$ for
$\rho_{2}$.
\end{enumerate}
For example, \begin{equation*}
\xymatrix@C0.5cm{ (i+1).i .(i+1) \ar@{<-}[r]^-{\rho_3} & i^2(i+1)
 \ar[r]^-{\rho_3} & i.(i+1).i\,,
&   i .j \ar@{<-}[r]^-{\rho} & ij
 \ar[r]^-{\rho} & j.i\,,}
\end{equation*}
 \begin{equation*}
\xymatrix@C0.6cm{ (i+1).i .(i+1).i \ar@{<-}[r]^-{\rho_3.i} & i^2(i+1).i
 \ar[r]^-{\rho_3.i} & i.(i+1).i.i\,,& \!\! i. i .j \ar@{<-}[r]^-{i.\rho} &i. ij
 \ar[r]^-{i.\rho} &i. j.i\,,}\end{equation*} denote, respectively, $\tau_{i,i+1}$,  $\tau_{i,j}$, $F_i \tau_{i,i+1}$ and $\tau_{i,j}F_i$.

Note that all the diagrams below are commutative, since comultiplication in $A(n)$
is coassociative. Moreover, the maps at the boundaries are isomorphisms
by~Remark~\ref{rem:isos}.

We have to check that two paths going from the
upper-left vertex to the down-right vertex produce equal maps.
For this it is enough to check that all the maps which are not at the boundary are also
isomorphisms.

\renewcommand{\comult}{\rho}
In the diagram
\begin{equation*}
\xymatrix{
i.i.i & i.i \ar[l]_-{i.\comult}\\
i.i \ar[u]^{\comult.i} & i \ar[u]_-{\comult} \ar[l]_-{\comult}
}
\end{equation*}
there is nothing to check since there are no arrows except the boundary ones.

In the diagrams
\begin{equation*}
\xymatrix@R7ex@C5em{
(i+1).i.(i+1).(i+1)  & (i+1).i.(i+1) \ar[l]_-{(i+1).i.\comult} \\
i^2(i+1).(i+1) \ar[u]_{\comult_3.(i+1)} \ar[d]^-{\comult_3.(i+1)}   \\
i.(i+1).i.(i+1) & i^2(i+1) \ar[uu]_-{\comult_3} \ar[dd]^{\comult_3}
\ar[lu]_{\comult} \ar[dl]^{\comult} \\ i.i^2(i+1) \ar[u]^{i.\comult_3} \ar[d]_{i.\comult_3} \\
i.i.(i+1).i & i.(i+1).i \ar[l]_{\comult.(i+1).i}
}
\xymatrix@R7ex@C5em{
(i+1).(i+1).i.(i+1)  & (i+1).i.(i+1)\ar[l]_-{\comult.i.(i+1)}\\
(i+1).i^2(i+1)\ar[u]^{(i+1).\comult_3}\ar[d]_{(i+1).\comult_3} &
\\
(i+1).i.(i+1).i & i^2(i+1)\ar[uu]_{\comult_3}\ar[dd]^{\comult_3} \ar[lu]_-{\comult}
\ar[ld]^-{\comult}\\
i^2(i+1).i \ar[u]^{\comult_3.i} \ar[d]_{\comult_3.i} \\
i.(i+1).i.i  &  i.(i+1).i\ar[l]_-{i.(i+1).\comult}
}
\end{equation*}
the invertibility of non-boundary maps follows from the commutativity of the
upper and lower trapezoids.

In the diagrams
\begin{equation*}
\xymatrix{
i.i.j &&& i.j \ar[lll]_{\comult.j} \\
i.ij \ar[u]^{i.\comult} \ar[d]_{i.\comult}&& & ij \ar[u]_{\comult} \ar[d]^{\comult}
\ar[lll]_{\comult} \ar[dll]_-{\comult}
\\
i.j.i & ij.i \ar[l]_-{\comult.i} \ar[r]_-{\comult.i} & j.i.i & j.i \ar[l]_-{j.\comult}
} \quad
\xymatrix{
i.j.j &&& i.j \ar[lll]_-{i.\comult}\\
ij.j \ar[u]^{\comult.j} \ar[d]_{\comult.j} &&& ij \ar[u]_{\comult}
\ar[d]^{\comult} \ar[lll]_-{\comult} \ar[lld]_-{\comult} \\
j.i.j & j.ij \ar[l]_-{j.\comult} \ar[r]_-{j.\comult} & j.j.i & j.i
\ar[l]_-{\comult.i}
}
\end{equation*}
the invertibility of non-boundary arrows follows from the commutativity of the
upper rectangles and  the commutativity of the lower-down triangles.

It is not difficult to conclude, by a recursive argument, that in the next diagrams
 it is enough to check that one of the radial arrows is invertible to
conclude that all the radial arrows are isomorphisms.

In the diagram
\begin{equation*}
\xymatrix@C4em{
(i+1).i.(i+1).i.(i+1) & i^2(i+1).i.(i+1) \ar[l]_-{\comult_3.i.(i+1)}
\ar[r]^-{\comult_3.i.(i+1)} & i.(i+1).i.i.(i+1)  \\
(i+1).i.i^2(i+1) \ar[u]^{(i+1).i.\comult_3} \ar[d]_{(i+1).i.\comult_3} &
  &
i.(i+1).i.(i+1) \ar[u]_-{i.(i+1).\comult .(i+1)} \\
(i+1).i.i.(i+1).i & i^2(i+1) \ar[ul]_-{\comult_3} \ar[uu]^-{\comult_3}
\ar[r]^-{\comult} \ar[rdd]^-{\comult_3} \ar[ldd]_-{\comult}    & i.i^2(i+1) \ar[u]_-{i.\comult_3}
\ar[d]^-{i.\comult_3} \\
(i+1).i.(i+1).i \ar[u]_{(i+1).\comult.(i+1).i} &  & i.i.(i+1).i\\
i^2(i+1).i \ar[u]_-{\comult_3.i} \ar[r]^-{\comult_3.i} & i.(i+1).i.i &
i.(i+1).i \ar[l]_-{i.(i+1).\comult} \ar[u]_-{\comult.(i+1).i}
}
\end{equation*}
the $5$ o'clock map $\rho_3 \colon i^2(i+1) \to i.(i+1).i$ is invertible, by
Remark~\ref{rem:isos}.

In the diagram
\begin{equation*}
\xymatrix@C5em{
i.(i-1).i.j  & i.(i-1).ij \ar[l]_-{i.(i-1).\comult}
\ar[r]^-{i.(i-1).\comult} & i.(i-1).j.i   \\
 (i-1)^2i.j \ar[u]^-{\comult_3.j} \ar[d]_-{\comult_3.j} &&
i.(i-1)j.i\ar[u]_-{i.\comult.i} \ar[d]^-{i.\comult.i}\\
(i-1).i.(i-1).j & & i.j.(i-1).i \\
(i-1).i.(i-1)j \ar[u]^-{(i-1).i.\comult} \ar[d]_-{(i-1).i.\comult}
 &  (i-1)^2ij
\ar[uuu]^-{\comult_3} \ar[ruu]^-{\comult_3} \ar[r]^-{\comult_3}
\ar[rdd]^-{\comult} \ar[ddd]_-{\comult_3} \ar[ldd]_-{\comult_3}
\ar[l]_-{\comult_3} \ar[luu]_-{\comult}
 &
ij.(i-1).i \ar[u]_-{\comult.(i-1).i} \ar[d]^-{\comult.(i-1).i}
\\
(i-1).i.j.(i-1) && j.i.(i-1).i \\
(i-1).ij.(i-1) \ar[u]^-{(i-1).\comult.(i-1)} \ar[d]_-{(i-1).\comult.(i-1)}  &&
j.(i-1)^2i \ar[u]_-{j.\comult_3} \ar[d]^-{j.\comult_3} \\
(i-1).j.i.(i-1) &
(i-1)j.i.(i-1) \ar[l]_-{\comult.i.(i-1)} \ar[r]^-{\comult.i.(i-1)} &
j.(i-1).i.(i-1)
}
\end{equation*}
the $10$ o'clock map $\comult\colon (i-1)^2ij \to
(i-1)^2i.j$ is an isomorphism, by
Theorem~\ref{comultiso}.
\begin{figure}
\caption{\label{lastdiagram}
}
\begin{equation*}
\xymatrix@C5em{
k.j.k.i.j.k & k.j.ik.j.k \ar[l]_-{k.j.\rho.j.k} \ar[r]^-{k.j.\rho.j.k}  &
k.j.i.k.j.k & k.j.i.j^2k \ar[l]_-{k.j.i.\rho_3} \ar[d]_-{k.j.i.\rho_3} \\
j^2k.i.j.k \ar[u]^-{\rho_3.i.j.k}
\ar[d]_-{\rho_3.i.j.k}  & & &
k.j.i.j.k.j\\
j.k.j.i.j.k    &&&
k.i^2j.k.j \ar[u]_-{k.\rho_3.k.j}
\ar[d]^-{k.\rho_3.k.j}  \\
j.k.i^2j.k \ar[u]^-{j.k.\rho_3.k}
\ar[d]_-{j.k.\rho_3.k} &&& k.i.j.i.k.j \\
j.k.i.j.i.k &&& k.i.j.ik.j \ar[d]^-{k.i.j.\rho.j}
\ar[u]_-{k.i.j.\rho.j} \\
j.k.i.j.ik \ar[d]_-{j.k.i.j.\rho}
\ar[u]^-{j.k.i.j.\rho}  &
i^3j^2k
\ar[uuuuu]^-{\rho_5} \ar@/^7ex/[uuuuurr]^{\rho_4}
\ar[uuurr]^-{\rho_4}  \ar[urr]^-{\rho_5}
\ar[drr]^-{\rho_5} \ar[dddrr]^-{\rho_4}
\ar@/_7ex/[dddddrr]^-{\rho_4} \ar@/_3ex/[ddddddr]^-{\rho_5}
\ar@/^5ex/[ddddddl]^-{\rho_4} \ar[ddddl]_-{\rho_4}
\ar[ddl]_-{\rho_5} \ar[l]_-{\rho_5}
\ar[uul]_-{\rho_4} \ar[uuuul]_-{\rho_4}
&& k.i.j.k.i.j \\
j.k.i.j.k.i &&& ik.j.k.i.j \ar[d]^-{\rho.j.k.i.j}
\ar[u]_-{\rho.j.k.i.j}  \\
j.ik.j.k.i \ar[d]_-{j.\rho.j.k.i}
\ar[u]^-{j.\rho.j.k.i}  &&& i.k.j.k.i.j \\
j.i.k.j.k.i  &&& i.j^2k.i.j \ar[d]_-{i.\rho_3.i.j}
\ar[u]^-{i.\rho_3.i.j} \\
j.i.j^2k.i \ar[d]_-{j.i.\rho_3.i}
\ar[u]^-{j.i.\rho_3.i}  &&& i.j.k.j.i.j \\
j.i.j.k.j.i &&& i.j.k.i^2j \ar[d]_-{i.j.k.\rho_3}
\ar[u]^-{i.j.k.\rho_3} \\
i^2j.k.j.i \ar[u]^-{\rho_3.k.j.i} \ar[r]^-{\rho_3.k.j.i} & i.j.i.k.j.i &
i.j.ik.j.i \ar[r]^-{i.j.\rho.j.i}
\ar[l]_-{i.j.\rho.j.i} & i.j.k.i.j.i
}
\end{equation*}
\end{figure}

In the diagram
\begin{equation*}
\xymatrix@C5em{
i.(j+1).j.(j+1) & i(j+1).j.(j+1) \ar[l]_-{\comult.j.(j+1)} \ar[r]^-{\comult.j.(j+1)} &
(j+1).i.j.(j+1) \\
i.j^2(j+1) \ar[u]^-{i.\comult_3} \ar[d]_-{i.\comult_3} &&
(j+1).ij.(j+1) \ar[u]_-{(j+1).\comult.(j+1)} \ar[d]^-{(j+1).\comult.(j+1)}\\
i.j.(j+1).j && (j+1).j.i.(j+1) \\
ij.(j+1).j \ar[u]^-{\comult.(j+1).j} \ar[d]_-{\comult.(j+1).j} &
ij^2(j+1)
\ar[uuu]^-{\comult_3} \ar[ruu]^-{\comult_3} \ar[r]^-{\comult_3}
\ar[rdd]^-{\comult} \ar[ddd]_-{\comult_3} \ar[ldd]_-{\comult_3}
\ar[l]_-{\comult_3} \ar[luu]_-{\comult}
&
(j+1).j.i(j+1) \ar[u]_-{(j+1).j.\comult} \ar[d]^-{(j+1).j.\comult} \\
j.i.(j+1).j && (j+1).j.(j+1).i\\
j.i(j+1).j \ar[u]^-{j.\comult.j} \ar[d]_-{j.\comult.j} &&
j^2(j+1).i \ar[u]_-{\comult_3.i} \ar[d]^-{\comult_3.i} \\
j.(j+1).i.j &
j.(j+1).ij \ar[l]_-{j.(j+1).\comult} \ar[r]^-{j.(j+1).\comult} &
j.(j+1).j.i
}
\end{equation*}

the $4$ o'clock map $\rho\colon ij^2(j+1) \to j^2(j+1).i$ is an isomorphism, by
Theorem~\ref{comultiso}.

In the diagram
\begin{equation*}
\xymatrix@C3em{
i.j.k & ij.k \ar[l]_-{\comult.k} \ar[r]^-{\comult.k} & j.i.k & j.ik
\ar[l]_-{j.\comult} \ar[r]^-{j.\comult} & j.k.i  \\
i.jk \ar[u]^-{i.\comult} \ar[d]_-{i.\comult} &&
ijk \ar[lu]^{\comult} \ar[ru]^{\comult} \ar[rr]^{\comult} \ar[ll]^{\comult} \ar[ld]^{\comult} \ar[rd]^{\comult}
&&
 jk.i \ar[u]_-{\comult.i} \ar[d]^-{\comult.i} \\
i.k.j &
ik.j \ar[l]_-{\comult.j} \ar[r]^-{\comult.j}
&
k.i.j & k.ij \ar[l]_-{k.\comult} \ar[r]^-{k.\comult}&
k.j.i
 }
\end{equation*}
for example, the map $\rho\colon ijk \to ij.k$ is an isomorphism, by
Theorem~\ref{comultiso}.

In the   diagram  depicted in Figure~\ref{lastdiagram}, we write $j=i+1$ and  $k=i+2$.
In this diagram the $11$ o'clock map  $\rho_4 \colon i^3(i+1)^2(i+2) \to j^2k.i.j.k $
is an isomorphism, since it is  the following composition of isomorphisms defined
in Theorem~\ref{comultiso}
\begin{equation*}
i^3j^2k \to i^2j^2k.k \to ij^2k.j.k \to j^2k.i.j.k \quad .
\end{equation*}

This concludes the proof that the collection of functors $F_i$, $1\le i\le n-1$,
and of natural isomorphisms $\tau_{ij}$, $1\le i\le j\le n-1$, defines a preaction
of $\rs$ on Comod-$A(\delta)_d$.

\section{A
(pre)action of $\rs$ on $S_{\alpha,\beta}^-(n,r)$\mbox{-Mod}}
\label{seventh}

In this section we show that the preaction of $\rs$ on Comod-$A(\delta)_d$ induces a preaction (and so an action) of $\rs$ on the category of $ S^-(n,r)$-modules, where   $ S^-(n,r)= S_{\alpha,\beta}^-(n,r)$ is the  quantum negative  Borel-Schur algebra.

We prove first that the preaction of $\rs$ on Comod-$A(\delta)_d$ can be
restricted to Comod-$A(\delta)$.

For each $1\le i \le n-1$, define $F'_i  \colon
\mbox{Comod-}A(\delta) \to \mbox{Comod-}A(\delta)$ by
$$
M\mapsto M
\otimes^{A(\delta)} A[i].
$$ Let $\psi\colon A(\delta) \to A(\delta)_d$ be the canonical inclusion. Then we have the associated restriction functor $\psi_\circ  \colon
\mbox{Comod-}A(\delta) \to \mbox{Comod-}A(\delta)_d$.
\begin{proposition}
 The
inclusion $\psi_i \colon A[i] \to A[i]_d$
induces a natural isomorphism $\psi_\circ F'_i\to F_i \psi_\circ $.
\end{proposition}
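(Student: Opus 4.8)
The plan is to take, for each $A(\delta)$-comodule $M$, the morphism obtained by restricting $\id_M\otimes\psi_i\colon M\otimes A[i]\to M\otimes A[i]_d$, namely
\begin{equation*}
\Phi_M\colon\ \psi_\circ\bigl(M\otimes^{A(\delta)}A[i]\bigr)\ \longrightarrow\ (\psi_\circ M)\otimes^{A(\delta)_d}A[i]_d,
\end{equation*}
and to prove that this is a well-defined natural isomorphism.

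First I would check that $\Phi_M$ is well defined, natural, and injective. Since $a[i]=\delta+v_i$ is non-decreasing, $A[i]=A(a[i])$ has no zero divisors by Corollary~\ref{intdomain}, so $[d]_{a[i]}$ is a non-zero divisor and $\psi_i$ (like $\psi$) is an injective Ore localisation. The canonical projections $A[i]\to A(\delta)$ and $A[i]_d\to A(\delta)_d$ together with the inclusions $\psi_i$, $\psi$ form a commutative square of coalgebra homomorphisms; hence $\psi_i$ intertwines the left $A(\delta)$-coaction $\lambda\colon A[i]\to A(\delta)\otimes A[i]$, $[x]_{a[i]}\mapsto\sum[x_{(1)}]_\delta\otimes[x_{(2)}]_{a[i]}$, with the left $A(\delta)_d$-coaction of $A[i]_d$, and likewise intertwines the right $A[i]$- and $A[i]_d$-coactions. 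Combined with $\rho_{\psi_\circ M}=(\id_M\otimes\psi)\rho_M$, this shows $\id_M\otimes\psi_i$ carries the cotensor product $M\otimes^{A(\delta)}A[i]$ (which is a kernel) into $(\psi_\circ M)\otimes^{A(\delta)_d}A[i]_d$, and that the resulting $\Phi_M$ is a morphism of $A[i]_d$-comodules, hence of $A(\delta)_d$-comodules; naturality in $M$ is clear. Injectivity is then free: $\psi_i$ is injective, so $\id_M\otimes\psi_i$ is injective over the field $\k$, and $\Phi_M$ is its restriction.

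The remaining and main task will be surjectivity. Given $\xi\in(\psi_\circ M)\otimes^{A(\delta)_d}A[i]_d$, I would use $A[i]_d=\bigcup_{k\ge0}[d]_{a[i]}^{-k}A[i]$ to write $\xi=\sum_j m_j\otimes[d]_{a[i]}^{-k}y_j$ with all $y_j\in A[i]$ and one $k$; note each $m_{j,(1)}$ lies in $A(\delta)$ since $\psi_\circ M$ is the restriction of an $A(\delta)$-comodule. Left-multiplying the middle and last tensor factors of the cotensor relation for $\xi$ by the invertible group-likes $[d]_\delta^{k}$ and $[d]_{a[i]}^{k}$ (using that $[d]_{a[i]}$ is group-like with image $[d]_\delta$ under $\pi_i$) turns it into
\begin{equation*}
\sum_j m_{j,(0)}\otimes[d]_\delta^{k}m_{j,(1)}\otimes y_j\ =\ \sum_j m_j\otimes[y_{j,(1)}]_\delta\otimes[y_{j,(2)}]_{a[i]}
\end{equation*}
inside $M\otimes A(\delta)\otimes A[i]$. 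Since $[d]_\delta$ is normal, $[d]_\delta^{k}A(\delta)$ is a two-sided ideal, and projecting the middle factor onto $A(\delta)/[d]_\delta^{k}A(\delta)$ annihilates the left-hand side; hence $\eta:=\sum_j m_j\otimes y_j$ lies in $M\otimes\ker\theta_k$, where $\theta_k\colon A[i]\to\bigl(A(\delta)/[d]_\delta^{k}A(\delta)\bigr)\otimes A[i]$ is $\lambda$ followed by the projection on the first factor.

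The heart of the argument will then be the identity $\ker\theta_k=[d]_{a[i]}^{k}A[i]$, i.e.\ that the left $A(\delta)$-coaction on $A[i]$ detects divisibility by powers of the quantum determinant. The inclusion $\supseteq$ is immediate from $\lambda([d]_{a[i]})=[d]_\delta\otimes[d]_{a[i]}$ and multiplicativity of $\lambda$. For $\subseteq$ I would note that $\ker\theta_k$ is a two-sided ideal of $A[i]$ — the preimage of the two-sided ideal $[d]_\delta^{k}A(\delta)\otimes A[i]$ under the algebra homomorphism $\lambda$ — containing $[d]_{a[i]}^{k}A[i]$, and then use the monomial basis of Theorem~\ref{basis}: since $[d]_{a[i]}$ is group-like and quasi-central, $[d]_{a[i]}^{k}A[i]$ has a basis whose leading monomials (for the order of Theorem~\ref{groebner}) are the $x^{k\omega_0+\omega}$, with $\omega_0$ the leading exponent of $[d]_{a[i]}$ and $\omega$ running over the exponents indexing the basis of $A[i]$; comparing leading terms of $\theta_k$ (in the first tensor factor) on the complementary basis monomials then shows no $\k$-combination of them lies in $\ker\theta_k$ without already lying in $[d]_{a[i]}^{k}A[i]$. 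This leading-term bookkeeping is the step I expect to require the most care. Granting the identity, $\eta\in M\otimes[d]_{a[i]}^{k}A[i]$, so applying left multiplication by $[d]_{a[i]}^{-k}$ in the second factor to $\eta$ returns $\xi$ and shows $\xi\in M\otimes A[i]$; finally, since $\id_M\otimes(\psi\otimes\psi_i)$ is injective, the cotensor relation for $\xi$ over $A(\delta)_d$ descends to the cotensor relation over $A(\delta)$, exhibiting $\xi$ in the image of $\Phi_M$. Hence $\Phi_M$ is onto, and the proof is complete.
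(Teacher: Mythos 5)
Your construction of the natural transformation coincides with the paper's (restrict $\id_M\otimes\psi_i$ to the cotensor products), and your treatment of well-definedness, naturality and injectivity is correct. The divergence is in surjectivity. The paper reduces to $M=A(\delta)$ via $M\cong M\otimes^{A(\delta)}A(\delta)$ and associativity of the cotensor product, recognises the target $A(\delta)\otimes^{A(\delta)_d}A[i]_d$ as $\ind{i}(\psi_\circ A(\delta))$, and then proves that \eqref{ai} is an isomorphism by running the homological argument of Theorem~\ref{comultiso} on the exact sequence \eqref{sequence} for $b=a[i]$, $l=i$, using the tensor identity (Theorem~\ref{tensor}) and the vanishing $R^k\ind{i}\k_{v_{i+1}}=0$ of Theorem~\ref{induction}, which ultimately rests on Donkin's results. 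You instead clear denominators and reduce surjectivity to the ring-theoretic identity $\ker\theta_k=[d]_{a[i]}^kA[i]$. That reduction is sound (and one can further reduce to $k=1$, since $[d]_\delta$ and $[d]_{a[i]}$ are normal non-zero-divisors), but the inclusion $\ker\theta_k\subseteq[d]_{a[i]}^kA[i]$ is precisely where all the content of the proposition sits, and for it you offer only a sketch.

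That sketch, as written, does not go through. Take $n=2$, $i=1$, $k=1$, so $A[1]=A(2)$, $[d]_\delta=c_{11}c_{22}$, and $c_{11}c_{22}$ is one of your ``complementary'' basis monomials. Here $\lambda(c_{11}c_{22})=c_{11}c_{21}\otimes c_{11}c_{12}+c_{11}c_{22}\otimes c_{11}c_{22}$: the term indexed by the monomial itself dies modulo $[d]_\delta A(\delta)$, and non-membership in $\ker\theta_1$ is detected only by the off-diagonal term $c_{11}c_{21}\otimes c_{11}c_{12}$. So the surviving term of $\theta_k(c^\tau)$ is not controlled by the leading monomial of $c^\tau$ in any evident way, and for a general linear combination of complementary monomials you would still have to exclude cancellation among such surviving terms after rewriting modulo the Gr\"obner basis of Theorem~\ref{basis}; nothing in the proposal explains how. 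Note also that $\ker\theta_1=[d]_{a[i]}A[i]$ is essentially equivalent to the surjectivity you are trying to establish -- it is the quantised statement that a function on the parabolic all of whose left Borel translates are divisible by $[d]_\delta$ is itself divisible by $[d]_{a[i]}$ -- so deferring it to unspecified ``bookkeeping'' leaves the heart of the proof missing. The paper's detour through $\ind{i}$ and Theorem~\ref{induction} is exactly the mechanism by which this divisibility statement actually gets proved; if you want a self-contained combinatorial proof of the key inclusion, that argument still has to be supplied.
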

\begin{proof}
Let $M \in \mbox{Comod-}A(\delta)$. Then the natural transformation in question is given by
\begin{equation*}
\begin{aligned}
M \otimes^{A(\delta)} A[i]  \to \psi_\circ M \otimes^{A(\delta)_d} A[i]_d\\
\sum_j  m_j \otimes x_j \mapsto  \sum m_j \otimes \psi_i(x_j).
\end{aligned}
\end{equation*}
Since  we have $M \cong M
\otimes^{A(\delta)} A(\delta)$, and the cotensor product is associative, to prove the proposition  it is enough
to show that
\begin{equation*}
\begin{aligned}
A(\delta) \otimes^{A(\delta)} A[i]  \to \psi_\circ A(\delta) \otimes^{A(\delta)_d} A[i]_d\\
\sum_j  z_j \otimes x_j \mapsto  \sum z_j \otimes \psi_i(x_j)
\end{aligned}
\end{equation*}
is an isomorphism.
Precomposing this with the isomorphism
$\,
A[i]  \to A(\delta)\otimes^{A(\delta)} A[i]\,$,
$[x]_{a[i]}   \mapsto \sum [x_{(1)}]_{\delta} \otimes [x_{(2)}]_{a[i]}
\,,$ we get
the map
\begin{equation}
\label{ai}
\begin{aligned}
A[i] & \to A(\delta)\otimes^{A(\delta)_d} A[i]_d\\
[x]_{a[i]}  & \mapsto \sum [x_{(1)}]_{\delta} \otimes [x_{(2)}]_{a[i]}.
\end{aligned}
\end{equation}
Thus all that is left  to check is  that \eqref{ai} is an isomorphism. For this, consider the
exact sequence~\eqref{sequence}, for $b=a[i]$ and $l=i$,
\begin{equation*}
0 \rightarrow A[i] \otimes \k_{v_{i+1}} \rightarrow A[i] \rightarrow A(\delta)
\rightarrow 0.
\end{equation*}
It can also be considered  as a sequence of $A(\delta)_d$-comodules. Proceeding as in the
proof of Theorem~\ref{comultiso} (with $b=\delta$ and $l=i$),  using Theorem~\ref{induction} and
Theorem~\ref{tensor}, we see that \eqref{ai} is an isomorphism of $A(\delta)_d$-comodules.
\end{proof}
Note that,  since $\psi\colon A(\delta) \to A(\delta)_d$ is a monomorphism of
coalgebras over a field, the functor $\psi_\circ$  is full and faithful.
Therefore, for any $M\in \mbox{Comod-}A(\delta)$, we have an
 isomorphism
\begin{equation*}
\mbox{Comod-}A(\delta) ( (F'_i)^2 M, F'_i M) \xrightarrow{\cong}
\mbox{Comod-}A(\delta)_d (F_i^2 \psi_\circ M, F_i \psi_\circ M),
\end{equation*}
for every $1\le i\le n-1$. Hence we can define $(\tau'_{ii})_M$ as the map that
corresponds to $(\tau_{ii})_M$ under this isomorphisms. It is clear that
$\tau'_{ii}$
is a natural transformation from $(F'_i)^2$ to $F'_i$. Similarly, one can define the
natural transformations $\tau'_{i,j}$ for $i<j$. Since $(F,\tau)$ is a preaction
on Comod-$A(\delta)_d$, we get that $(F',\tau')$ is a preaction on
Comod-$A(\delta)$.

Let $r$ be a natural number. Then the subset $A(\delta;r)$ of $r$-homogeneous
elements in $A(\delta)$ is a finite dimensional subcoalgebra of $A(\delta)$.
Similarly, the set $A(a[i];r)$ of $r$-homogeneous elements in $A[i]$ is a finite
dimensional subcoalgebra of $A[i]$.
Let $M$ be an $A(\delta;r)$-comodule.
Then from the definition of the cotensor product we get
\begin{equation*}
M \otimes^{A(\delta)} A[i] =  M\otimes^{A(\delta;r)} A(a[i];r).
\end{equation*}
Thus $F_i'M$ is  an $A(\delta;r)$-comodule. Hence the preaction
$(F',\tau')$ defines a preaction of $\rs$ on Comod-$A(\delta;r)$.

 As it is well known, see e.g.~\cite{donkin1}, \cite{DSY}, the
 associative algebra $S^-(n,r)=S_{\alpha,\beta}^-(n,r)$ dual to $A(\delta;r)$ is called the (negative) quantised \emph{Borel-Schur} algebra.
As usual, we have a canonical equivalence between the categories
$S^-(n,r)$-Mod and Comod-$A(\delta;r)$.
Therefore we get that $(F',\tau')$ induces an
action of $\rs$ on $S^-(n,r)$-Mod.

\section{Examples}
\label{sec:examples}
In this section we
consider some explicit examples of the application of the functors $F_w$ to $A(\delta)_d$-comodules.
For simplicity, we will work within the non-quantised setting over an infinite
field. In particular,  the
coordinate variables $c_{ij}$ commute with each other.

We will need some additional notation.
We denote by $\k[T_n]$ the coordinate algebra of the subgroup of diagonal
matrices in $\gl_{n}(\k)$. The canonical projection $\pi\colon A(\delta)_d
\to \k[T_n]$  is defined by
\begin{equation*}
\pi(c_{ij}) = \begin{cases}
c_{ii},& i=j\\
0, & \mbox{otherwise.}
\end{cases}
\end{equation*}
It is straightforward to verify that $\pi$ is a homomorphism of coalgebras.
Therefore, every $A(\delta)_d$-comodule $(M,\rho)$ can be considered a
$\k[T_n]$-comodule with the coaction given by
\begin{equation*}
\rho_T(x ) := (\id \otimes \pi) (\rho(x)),
\end{equation*}
for all $x\in M$.
For every $a\in \Z^n$ we define the one-dimensional $\k[T_n]$-comodule
$\k_a$ by
\begin{equation*}
\rho(1) := 1 \otimes c_{11}^{a_1} \dots c_{nn}^{a_n}.
\end{equation*}
It is well-known that every finite dimensional indecomposable comodule over $\k[T_n]$ is
isomorphic to $\k_a$ for some $a\in \Z^n$.
Given a finite dimensional  $A(\delta)_d$-comodule $M$, we can write
$M = \bigoplus_{a} M_a$, where each $M_a$ is the $\k[T_n]$-
submodule of $M$ satisfying
\begin{equation*}
\rho_T (x) = x \otimes c_{11}^{a_1}\dots c_{nn}^{a_n}
\end{equation*}
for all  $x\in M_a$.
The subspaces  $M_a$ of $M$ are called   \emph{weight subspaces} of $M$. We will say that the elements of $M_{a}$ have \emph{weight $a$}.

Fix $i\in \n$.
We will write $\k[G_i]$ for the coordinate algebra of the Levi subgroup
\begin{equation*}
G_i := \gl_{1}(\k)^{ (i-1)} \times \gl_2(\k) \times
\gl_{1}(\k)^{(n-i-1)}.
\end{equation*}
Thus $\k[G_i]$ is the localization of $\k[c_{11},c_{22},\dots, c_{nn},
c_{i,i+1},c_{i+1,i}]$ with respect to
\begin{equation*}
c_{11} \dots c_{i-1,i-1} (c_{ii} c_{i+1,i+1} - c_{i,i+1}c_{i+1,i})
c_{i+2,i+2}\dots c_{nn}.
\end{equation*}
Note that $A[i]_d$ is the  coordinate algebra of the corresponding parabolic
subgroup in $\gl_n(\k)$. Since the Levi subgroup $G_i$ is a quotient of the
corresponding parabolic subgroup we get a well defined
homomorphism of Hopf algebras
\begin{equation*}
\zeta_i \colon \k[G_i] \to A[i]_d
\end{equation*}
determined by
\begin{equation*}
\zeta_i(c_{kl}) = c_{kl},
\end{equation*}
where $k=l \in \n$ or $\left\{ k,l \right\} = \left\{ i,i+1 \right\}$.
Thus every $\k[G_i]$-comodule can be considered as an $A[i]_d$-comodule via
$\zeta_i$.

For every composition $\mu= (\mu_1, \dots , \mu_n)$  such that
$\mu_i = \mu_{i+1}$, we denote by $\k_\mu$ the one-dimensional $A[i]_d$-comodule  with   coaction
given by
\begin{equation*}
\rho(1) = 1 \otimes c_{11}^{\mu_1}\dots c_{i-1,i-1}^{\mu_{i-1}}
(c_{ii}c_{i+1,i+1}-c_{i,i+1}c_{i+1,i})^{\mu_i} c_{i+2,i+2}^{\mu_{i+2}} \dots
c_{nn}^{\mu_n}.
\end{equation*}

From \cite[ Section 3]{donkin1}, we know  that $\k[c_{ii},c_{i,i+1}]$ is a $\k[G_i]$-subcomodule of the regular
$\k[G_i]$-comodule $\k[G_i]$.
For a natural number $m$, we denote by $Y_{i,m}$ the $m$th homogeneous component
of $\k[c_{ii}, c_{i,i+1}]$. Then $Y_{i,m}$ is a $\k[G_i]$-subcomodule of the
$\k[G_i]$-comodule $\k[c_{ii}, c_{i,i+1}]$.
We write $\ty_{i,m}$ for $Y_{i,m}$ considered as $A[i]_d$-comodule via
$\zeta_i$.

It follows from Lemma~3.1 and Lemma~2.12 in \cite{donkin1} that
\begin{enumerate}[1)]
\item \emph{If $\lambda= (\lambda_1, \dots, \lambda_n)$ is such that $\lambda_i - \lambda_{i+1}=m\ge
0$, then $\pi_i^\circ \k_\lambda \cong \k_\mu \otimes \ty_{i,m}$, where
\begin{equation*}
\mu = (\lambda_1,\dots, \lambda_{i-1}, \lambda_{i+1}, \lambda_{i+1},\dots,
\lambda_n)
\end{equation*}
and $R^k \pi_i^\circ\k_\lambda \cong 0$ for $k\ge 1$.}
\item\emph{ If $\lambda_i - \lambda_{i+1} = -1$, then $R^k \pi_i^\circ \k_\lambda\cong
0$ for all $k\ge 0$.}
\end{enumerate}

From now on we fix $n=3$.
Using the above facts we will give an explicit  description of the
$A(\delta)_d$-comodules $F_w \k_{(1,1,0)}$, for all $w\in \hecke{3}$.

As $\ty_{1,0}$ is the trivial $A[1]_d$-comodule, we get that
\begin{equation*}
\pi_1^\circ \k_{(1,1,0)} \cong \k_{(1,1,0)}.
\end{equation*}
Therefore, $F_1 \k_{(1,1,0)}\cong \k_{(1,1,0)}$.
This implies that $F_2 F_1 \k_{(1,1,0)} \cong F_2 \k_{(1,1,0)}$ and
\begin{equation}
\label{eq2}
F_2 F_1 F_2 \k_{(1,1,0)} \cong F_1 F_2 F_1 \k_{(1,1,0)} \cong F_1 F_2
\k_{(1,1,0)}.
\end{equation}
Thus, to know all the $A(\delta)_d$-comodules $F_w \k_{(1,1,0)}$,  we only have to compute $F_2\k_{(1,1,0)}$ and $F_1 F_2 \k_{(1,1,0)}$.

We start by studying $F_2\k_{(1,1,0)}$. For this, consider $\ty_{2,1}$. It has $\k$-basis $\left\{ c_{22},c_{23} \right\}$ and
 $A[2]_d$-comodule structure given by
\begin{equation*}
 \rho(c_{22}) = c_{22}\otimes c_{22} + c_{23}\otimes c_{32},\quad
\rho(c_{23}) = c_{22}\otimes c_{23} + c_{23} \otimes c_{33}.
\end{equation*}
Let us compute the $A[2]_d$-comodule structure on
\begin{equation*}
\pi_2^\circ \k_{(1,1,0)} \cong \k_{(1,0,0)} \otimes \ty_{2,1}.
\end{equation*}
Since $\rho(x)=x\otimes c_{11}$ for $x \in \k_{(1,0,0)}$,
we get in $\k_{(1,0,0)}\otimes \ty_{2,1}$ 
\begin{equation*}
\rho(1\otimes c_{22}) = (1\otimes c_{22}) \otimes c_{11}c_{22} + (1\otimes
c_{23}) \otimes c_{11}c_{32}, \quad
\rho(1\otimes c_{23}) = (1\otimes c_{22})\otimes c_{11} c_{23} + (1\otimes
c_{23}) \otimes c_{11}c_{33}.
\end{equation*}
Therefore, the $A(\delta)_d$-comodule $F_2 \k_{(1,1,0)} = \pi_{2\circ}
\pi_2^\circ \k_{(1,0,0)}$ is two-dimensional, with basis $\{1\otimes c_{22}, 1\otimes
c_{23}\}$ and $A(\delta)_d$-comodule structure given by
\begin{equation}
\label{eq7}
\rho(1\otimes c_{22}) = (1\otimes c_{22}) \otimes c_{11}c_{22} + (1\otimes
c_{23}) \otimes c_{11}c_{32}, \quad
\rho(1\otimes c_{23}) =  (1\otimes
c_{23}) \otimes c_{11}c_{33}.
\end{equation}
It is now easy to determine the weight subspace structure of $F_2\k_{(1,1,0)}$. This structure will be useful to study $F_1F_2\k_{(1,1,0)}$.

From (\ref{eq7}), we get
\begin{equation*}
\rho_T(1\otimes c_{22}) = (1\otimes c_{22}) \otimes c_{11}c_{22}
,\quad
\rho_T(1\otimes c_{23}) =  (1\otimes
c_{23}) \otimes c_{11}c_{33}.
\end{equation*}
This implies that
\begin{equation*}
\left( F_2\k_{(1,1,0)} \right)_{(1,1,0)} = \left\langle 1\otimes c_{22} \right\rangle
,\quad \left( F_2 \k_{(1,1,0)} \right)_{(1,0,1)} = \left\langle 1\otimes
c_{23}
\right\rangle.
\end{equation*}
Moreover, from~\eqref{eq7} it also follows that
$\left\langle 1\otimes c_{23} \right\rangle$ is an $A(\delta)_d$-subcomodule of
$F_2 \k_{(1,1,0)}$ isomorphic to $\k_{(1,0,1)}$. The corresponding quotient has the following
$A(\delta)_d$-coaction
\begin{equation*}
\rho([1\otimes c_{22}]) = [1\otimes c_{22}] \otimes c_{11}c_{22}
\end{equation*}
and so it is isomorphic to $\k_{(1,1,0)}$.
Thus we get a short exact sequence of $A(\delta)_d$-comodules
\begin{equation}
\label{ses1}
0 \to \k_{(1,0,1)} \to F_2 \k_{(1,1,0)} \to \k_{(1,1,0)} \to 0.
\end{equation}

Next we will study the $A(\delta)_d$-comodule structure of $F_1 F_2 \k_{(1,1,0)}$.  For this we will exhibit first its weight subspaces, and then determine the $A(\delta)_d$-coaction on a weight basis of  $F_1 F_2 \k_{(1,1,0)}$. We start by applying $\pi_1^\circ$ to \eqref{ses1}.
As $R^1\pi_1^\circ \k_{(1,0,1)} \cong 0$, we get from the long exact sequence
that
\begin{equation*}
0 \to \pi_1^\circ \k_{(1,0,1)} \to \pi_1^\circ F_2 \k_{(1,1,0)} \to \pi_1^\circ
\k_{(1,1,0)} \to 0
\end{equation*}
is an exact sequence of $A[1]_d$-comodules.
Applying $\pi_{1\circ}$ and taking into account that $F_1 \k_{(1,1,0)}\cong
\k_{(1,1,0)}$
we get the exact sequence
\begin{equation}
\label{ses2}
0 \to F_1 \k_{(1,0,1)} \to F_1F_2 \k_{(1,1,0)} \to \k_{(1,1,0)} \to 0
\end{equation}
of $A(\delta)_d$-comodules.

By a computation similar to the case of $F_2\k_{(1,1,0)}$, we can see that $F_1
\k_{(1,0,1)}$ has  basis $\{1\otimes c_{11}, 1\otimes c_{12}\}$ and $A(\delta)_d$-comodule structure
\begin{equation}
\label{eq3}
\rho(1\otimes c_{11}) = (1\otimes c_{11}) \otimes c_{11}c_{33} + (1\otimes
c_{12}) \otimes c_{21}c_{33},\quad \rho(1\otimes c_{12}) = (1\otimes
c_{12})\otimes c_{22}c_{33}.
\end{equation}
Hence
\begin{equation*}
\rho_T(1\otimes c_{11}) = (1\otimes c_{11}) \otimes c_{11}c_{33} ,\quad \rho_T(1\otimes c_{12}) = (1\otimes
c_{12})\otimes c_{22}c_{33},
\end{equation*}
and therefore
\begin{equation*}
\left( F_1\k_{(1,0,1)} \right)_{(1,0,1)} = \left\langle 1\otimes c_{11}
\right\rangle, \quad \left( F_1\k_{(1,0,1)} \right)_{(0,1,1)} = \left\langle
1\otimes c_{12} \right\rangle.
\end{equation*}
Denote by $u$ the image of $1\otimes c_{12}$ in $F_1F_2\k_{(1,1,0)}$
under the monomorphism in \eqref{ses2}, and by $v$ the image of $1\otimes
c_{11}$ under the same map. Since the monomorphism in \eqref{ses2} is a
homomorphism of $A(\delta)_d$-comodules, from \eqref{eq3}, we get
\begin{equation*}
\rho(u) = u\otimes c_{22}c_{33}, \quad \rho(v) = v\otimes c_{11}c_{33} + u
\otimes c_{21}c_{33}.
\end{equation*}
Note that $u$ has weight $(0,1,1)$ and $v$ has weight $(1,0,1)$

The sequence \eqref{ses2} splits if considered as a sequence of
$\k[T_3]$-comodules. Thus
\begin{equation*}
F_1F_2 \k_{(1,1,0)} \cong \k_{(0,1,1)} \oplus \k_{(1,0,1)} \oplus \k_{(1,1,0)}
\end{equation*}
as $\k[T_3]$-comodules. 
In particular, every weight subspace of $F_1F_2 \k_{(1,1,0)}$ is one-dimensional
and
\begin{equation*}
(F_1F_2 \k_{(1,1,0)})_{(0,1,1)} = \left\langle u \right\rangle, \quad (F_1F_2
\k_{(1,1,0)})_{(1,0,1)} = \left\langle v \right\rangle.
\end{equation*}

From the explicit description \eqref{eq3} of the $A(\delta)_d$-coaction on
$F_1\k_{(1,0,1)}$, we get the short exact sequence
\begin{equation}\label{ses3}
0 \to \k_{(0,1,1)} \to F_1 \k_{(1,0,1)} \to \k_{(1,0,1)} \to 0.
\end{equation}
As $R^k\pi_2^\circ \k_{(1,0,1)}\cong 0$ for all $k\ge 0$, applying $\pi_2^\circ$
to \eqref{ses3}, we get that
\begin{equation*}
\pi_2^\circ F_1 \k_{(1,0,1)} \cong \pi_2^\circ \k_{(0,1,1)} \cong
\k_{(0,1,1)};\quad R^k\pi_2^\circ F_1 \k_{(1,0,1)} \cong R^k \pi_2^\circ
\k_{(0,1,1)} \cong 0,\ k\ge 1.
\end{equation*}
So applying $\pi_2^\circ$   to \eqref{ses2}, we get the short exact sequence of
$A[2]_d$-comodules
\begin{equation*}
0 \to \k_{(0,1,1)} \to \pi_2^{\circ} F_1 F_2 \k_{(1,1,0)} \to
\pi_2^\circ \k_{(1,1,0)} \to 0.
\end{equation*}
As $\pi_{2\circ}$ is exact and $F_2 = \pi_{2\circ} \pi_{2}^\circ$ we obtain the
short exact sequence
\begin{equation}
\label{ses4}
0 \to \k_{(0,1,1)} \to F_2 F_1 F_2 \k_{(1,1,0)} \to
F_2\k_{(1,1,0)} \to 0
\end{equation}
of $A(\delta)_d$-comodules.
In view of \eqref{eq2}, the short exact sequence \eqref{ses4} becomes
\begin{equation}\label{ses5}
0 \to \k_{(0,1,1)} \to  F_1 F_2 \k_{(1,1,0)} \xrightarrow{\theta}
F_2\k_{(1,1,0)} \to 0.
\end{equation}
Let $\bar{v}$ be the image of $v$ in $F_2 \k_{(1,1,0)}$ under the epimorphimsm
$\theta$ in
\eqref{ses5}.
Since $v$ has weight $(1,0,1)$, the same is true for $\bar{v}$. As the weight
subspace $(F_2\k_{(1,1,0)})_{(1,0,1)}$ is one-dimensional and is spanned by $1\otimes
c_{23}$, there is a non-zero $\gamma\in \k$ such that $\bar{v}= \gamma \otimes
c_{23}$.
The epimorphism $\theta$ in \eqref{ses5} induces an isomorphism between the weight subspaces
$(F_1F_2\k_{(1,1,0)})_{(1,1,0)}$ and $(F_2\k_{(1,1,0)})_{(1,1,0)} = \left\langle
1\otimes c_{22} \right\rangle$. Let us denote
by $w$ the element in $(F_1F_2\k_{(1,1,0)})_{(1,1,0)}$ that corresponds to
$\gamma \otimes c_{22}$ under this isomorphism.
Then by \eqref{eq7}
\begin{equation}\label{eq9}
\begin{aligned}
\rho(\theta (w)) &= \gamma \rho(1\otimes c_{22} )  = (\gamma \otimes c_{22})
\otimes c_{11}c_{22} + (\gamma \otimes c_{23}) \otimes c_{11}c_{32}.
\end{aligned}
\end{equation}

We have that $\left( F_1F_2 \k_{(1,1,0)} \right)_{(1,1,0)} = \left\langle w
\right\rangle$ and that $\left\{ u,v,w \right\}$ is a basis of $F_1F_2
\k_{(1,1,0)}$. Therefore, there are unique $h$ and $f$ in $A(\delta)_d$ such that
\begin{equation}
\label{rhow}
 \rho(w) = w \otimes c_{11}c_{22} + v \otimes h + u \otimes f
\end{equation}
and $\pi(h) = \pi(f) = 0$.  Thus
\begin{equation}\label{a;sdl}
\begin{aligned}
 (\theta \otimes \id) \rho(w) & = (\gamma \otimes c_{22}) \otimes
c_{11}c_{22} + (\gamma \otimes c_{23})\otimes h.
\end{aligned}
\end{equation}
Since $(\theta \otimes \id)  \rho = \rho \theta$, compairing
\eqref{eq9} and \eqref{a;sdl}, we get that
$h=  c_{11}c_{32}$. Hence it is left to determine $f$.

As $\k_{(1,1,0)}$ is an $A(\delta;2)$-comodule, we get from the considerations at
the end of Section~\ref{seventh}, that $F_1F_2 \k_{(1,1,0)}$ is an
$A(\delta;2)$-comodule. Therefore $f$ is an element of degree two in $A(\delta)\cong
\k[c_{11},c_{22},c_{33},c_{21},c_{31},c_{32}]$.

From \eqref{rhow}, we get
\begin{align*}
(\rho\otimes \id)\rho(w)& = w \otimes c_{11}c_{22}\otimes c_{11}c_{22} +  v \otimes
(c_{11}c_{32} \otimes c_{11}c_{22}+ c_{11}c_{33}\otimes c_{11}c_{32}  )\\
&\phantom{=} + u
\otimes (f\otimes c_{11}c_{22} +  c_{21}c_{33}\otimes c_{11}c_{32} +
c_{22}c_{33}\otimes f);\\
(\id \otimes \Delta)\rho(w) & = w \otimes c_{11}c_{22}\otimes c_{11}c_{22} +
 v \otimes (c_{11}c_{32} \otimes c_{11}c_{22} + c_{11}c_{33}\otimes
c_{11} c_{32}) \\& \phantom{=} +
u \otimes \Delta(f).
\end{align*}
As $(\rho\otimes \id)  \rho = (\id \otimes \Delta ) \rho$
we obtain that  $f$  satisfies the equation
\begin{equation}
\label{eq1}
\Delta(f) = f\otimes c_{11}c_{22} +  c_{21}c_{33} \otimes
c_{11} c_{32} + c_{22}c_{33} \otimes f.
\end{equation}
Denote by $V$ the subspace of $A(\delta;2)\otimes A(\delta;2)$ spanned by
\begin{equation}
\label{vbasis}
 \left\{ c_{ij}c_{kl}\otimes c_{js}c_{lt}\ \middle|\  i\ge j\ge s,\ k\ge l\ge t
\right\}.
\end{equation}
From the definition of the comultiplication in $A(\delta)$, we get that
$\Delta (A(\delta;2)) \subset V$.

Suppose $c_{ij}c_{kl}$ has  non-zero coefficient  in the expansion of
$f$ with respect to the monomial basis of $A(\delta;2)$. Note that $c_{ij}c_{kl}\not=
c_{11}c_{22}$, $c_{ij}c_{kl}\not= c_{22}c_{33}$ as $\pi(f)=0$. Then
from~\eqref{eq1}, we see that   $c_{ij}c_{kl} \otimes
c_{11}c_{22}$ and $c_{22}c_{33}\otimes c_{ij}c_{kl}$ have non-zero coefficients
in the expansion of $\Delta(f)\in \Delta (A(\delta;2))\subset V$ with respect to
the basis \eqref{vbasis} of $V$.
Thus $\left\{ j,l \right\}=\{1,2\}$ and $\left\{ i,k \right\}=\{2,3\}$. Therefore
the only basis elements of $A(\delta;2)$ that can have  non-zero coefficients in
the expansion of
$f$ are $c_{21}c_{32}$ and $c_{22}c_{31}$.
Direct computation now shows that
the only linear combination of $c_{21}c_{32}$ and $c_{22}c_{31}$ that satisfy \eqref{eq1} is
\begin{equation*}
f = c_{21}c_{32} - c_{22}c_{31}.
\end{equation*}
Hence we get a full description of $A(\delta)_d$-comodule structure on $F_1F_2
\k_{(1,1,0)}$:
\begin{equation*}
\rho(u) = u\otimes c_{22}c_{33},\ \rho(v) = v\otimes c_{11}c_{33} + u\otimes
c_{21}c_{33},\ \rho(w) = w\otimes c_{11}c_{22} + v\otimes c_{11}c_{32} +
u\otimes (c_{21}c_{32} - c_{22}c_{31}).
\end{equation*}

\bibliography{hecke}

\providecommand{\bysame}{\leavevmode\hbox to3em{\hrulefill}\thinspace}
\providecommand{\MR}{\relax\ifhmode\unskip\space\fi MR }
\providecommand{\MRhref}[2]{%
  \href{http://www.ams.org/mathscinet-getitem?mr=#1}{#2}
}
\providecommand{\href}[2]{#2}
\begin{thebibliography}{10}

\bibitem{magma}
Wieb Bosma, John Cannon, and Catherine Playoust, \emph{The {M}agma algebra
  system. {I}. {T}he user language}, J. Symbolic Comput. \textbf{24} (1997),
  no.~3-4, 235--265, Computational algebra and number theory (London, 1993).
  \MR{MR1484478}

\bibitem{crane}
Louis Crane, \emph{Clock and category: is quantum gravity algebraic?}, J. Math.
  Phys. \textbf{36} (1995), no.~11, 6180--6193. \MR{1355904 (96j:83036)}

\bibitem{frenkel}
Louis Crane and Igor~B. Frenkel, \emph{Four-dimensional topological quantum
  field theory, {H}opf categories, and the canonical bases}, J. Math. Phys.
  \textbf{35} (1994), no.~10, 5136--5154, Topology and physics. \MR{1295461
  (96d:57019)}

\bibitem{deligne}
P.~Deligne, \emph{Action du groupe des tresses sur une cat\'egorie}, Invent.
  Math. \textbf{128} (1997), no.~1, 159--175. \MR{1437497 (98b:20061)}

\bibitem{demazure}
Michel Demazure, \emph{Une nouvelle formule des caract\`eres}, Bull. Sci. Math.
  (2) \textbf{98} (1974), no.~3, 163--172. \MR{0430001 (55 \#3009)}

\bibitem{dipperdonkin}
Richard Dipper and Stephen Donkin, \emph{Quantum {${\rm GL}_n$}}, Proc. London
  Math. Soc. (3) \textbf{63} (1991), no.~1, 165--211. \MR{1105721 (92g:16055)}

\bibitem{donkin0}
Stephen Donkin, \emph{Hopf complements and injective comodules for algebraic
  groups}, Proc. London Math. Soc. (3) \textbf{40} (1980), no.~2, 298--319.
  \MR{566493 (81f:20053)}

\bibitem{donkin1}
\bysame, \emph{Standard homological properties for quantum {${\rm GL}_n$}}, J.
  Algebra \textbf{181} (1996), no.~1, 235--266. \MR{1382035 (97b:20065)}

\bibitem{DSY}
Stephen Donkin, Ana~Paula Santana, and Ivan Yudin, \emph{Homological properties
  of quantised {B}orel-{S}chur algebras and resolutions of quantised {W}eyl
  modules}, J. Algebra \textbf{402} (2014), 120--157. \MR{3160417}

\bibitem{parshall}
Jie Du, Brian Parshall, and Jian~Pan Wang, \emph{Two-parameter quantum linear
  groups and the hyperbolic invariance of {$q$}-{S}chur algebras}, J. London
  Math. Soc. (2) \textbf{44} (1991), no.~3, 420--436. \MR{1149005 (93d:20084)}

\bibitem{magyar}
Peter Magyar, \emph{Schubert polynomials and {B}ott-{S}amelson varieties},
  Comment. Math. Helv. \textbf{73} (1998), no.~4, 603--636. \MR{1639896
  (2000e:14085)}

\bibitem{mazorchuk}
Volodymyr Mazorchuk, \emph{Lectures on algebraic categorification}, QGM Master
  Class Series, European Mathematical Society (EMS), Z\"urich, 2012.
  \MR{2918217}

\bibitem{parshall:book}
Brian Parshall and Jian~Pan Wang, \emph{Quantum linear groups}, Mem. Amer.
  Math. Soc. \textbf{89} (1991), no.~439, vi+157. \MR{1048073 (91g:16028)}

\bibitem{reiner}
Victor Reiner and Mark Shimozono, \emph{Percentage-avoiding, northwest shapes
  and peelable tableaux}, J. Combin. Theory Ser. A \textbf{82} (1998), no.~1,
  1--73. \MR{1616579 (2000a:05220)}

\bibitem{takeuchi2}
Mitsuhiro Takeuchi, \emph{Morita theorems for categories of comodules}, J. Fac.
  Sci. Univ. Tokyo Sect. IA Math. \textbf{24} (1977), no.~3, 629--644.
  \MR{0472967 (57 \#12646)}

\bibitem{takeuchi}
\bysame, \emph{A two-parameter quantization of {${\rm GL}(n)$} (summary)},
  Proc. Japan Acad. Ser. A Math. Sci. \textbf{66} (1990), no.~5, 112--114.
  \MR{1065785 (92f:16049)}

\bibitem{preaction}
Ivan Yudin, \emph{Decreasing diagrams and coherent presentations}, arXiv
  preprint (2015).

\end{thebibliography}
\bibliographystyle{amsplain}

\end{document}